\documentclass[12pt]{article}
\usepackage[utf8]{inputenc}

\usepackage{amssymb,enumitem,xcolor}

\usepackage{graphics,graphicx,amsmath}
\usepackage{lmodern,microtype}

\usepackage{mathtools,amsthm,icomma,upgreek,xfrac,amsmath,dsfont,float}
\mathtoolsset{showonlyrefs,mathic}

\usepackage[colorlinks=true, urlcolor=blue, linkcolor=blue, citecolor=blue]{hyperref}

\oddsidemargin=-0.5cm 
\textwidth=16.5cm 
\textheight=23cm
\topmargin=-1.5cm

\allowdisplaybreaks

\newcommand{\E}{\mathbf{E}}
\renewcommand{\P}{\mathbf{P}}
\newcommand{ \p}{\boldsymbol{p}}
\newcommand{\R}{\mathbf{R}}
\newcommand{\N}{\mathbf{N}}

\newcommand{\h}{\H}

\theoremstyle{plain}
\newtheorem{theorem}{Theorem}[section]
\newtheorem{lemma}[theorem]{Lemma}
\newtheorem{proposition}[theorem]{Proposition}
\newtheorem{cor}[theorem]{Corollary}
\theoremstyle{definition}

\newtheorem{remark}[theorem]{Remark}

\numberwithin{equation}{section}

\newcommand{\Var}{\text{Var}}

\def\({\left(}
\def\){\right)}
\def\[{\left[}
\def\]{\right]}

\def\ind{{\mathbb I}}

\def\({\left(}
\def\){\right)}

\newcommand{\cov}{\mathrm{Cov}}
\def\[{\left[}
\def\]{\right]}
\def\real{{\mathord{\mathbb R}}}

\def\Var{\mathrm{Var}}
\renewcommand{\P}{\mathbb{P}}
\newcommand{\pr}{\mathbb{P}}

\newcommand{\V}{\mathcal{V}}

\newcommand{\A}{\mathcal{A}}

\newcommand{\covers}{{\mathcal C}}
\newcommand{\cover}{{\mathbb C}}

\newcommand{\indeksy}{{\bf\Gamma}}

\newcommand{\Pra}[1]{\pr\left(#1\right)}
\newcommand{\Gnmp}{{\mathcal G}(n, m, p)}
\newcommand{\cGnmp}{{\overline{\mathcal G}}(n, m, p)}
\newcommand{\Gnp}{{\mathcal G}(n, p)}

\newcommand{\hatp}{\mathbf{p}}

\newcommand{\II}{{\bf I}}
\newcommand{\JJ}{{\bf J}}

\newcommand{\pp}{{\bf p}}
\newcommand{\calP}{{\cal P}}
\newcommand{\Ind}{{\mathbb I}}
\newcommand{\Jind}{{\mathbb J}}
\newcommand{\Lind}{{\mathbb L}}

\newcommand*{\db}[1]{\overline{\overline{#1}}}

\allowdisplaybreaks

\title{
\huge
Normal approximation for  number of edges\\in random intersection graphs 
} 

\author{ 
Katarzyna Rybarczyk\thanks{Faculty of Mathematics and Computer Science,   Adam Mickiewicz University,   Pozna\'n,   Poland.
e-mail: {\tt kryba@amu.edu.pl}.}
\and
Grzegorz Serafin\thanks{Faculty of Pure and Applied Mathematics,    Wroc{\l}aw University of Science and Technology,   Ul. Wybrze\.ze Wyspia\'nskiego 27,    Wroc{\l}aw,   Poland.
e-mail: {\tt grzegorz.serafin@pwr.edu.pl}.}}

\begin{document}
\maketitle

\begin{abstract} 
The random intersection graph model $\mathcal G(n,m,p)$ is considered. Due to substantial edge dependencies, studying even fundamental statistics such as the subgraph count is significantly more challenging than in the classical binomial model $\mathcal G(n,p)$. First,   we establish normal approximation bound in both  the Wasserstein and the Kolmogorov distances for a class of local statistics on  $\mathcal G(n,m,p)$. Next,   we apply these results to derive such bounds for the standardised number of edges,   and determine the necessary and sufficient conditions for its asymptotic normality. We develop a new method that provides a combinatorial interpretation and facilitates the estimation of analytical expressions related to general distance bounds. In particular,  this allows us to control the behaviour of central moments of subgraph existence indicators. The presented method can also be extended to count copies of subgraphs larger than a single edge.

\end{abstract}
\noindent\emph{Keywords}: random intersection graph,  number of edges,  normal approximation,  convergence rates.\\
\noindent\emph{MSC2020 subject classification}: Primary 05C80, 60F05; secondary 05C82.
\section{Introduction}

Counting small subgraphs  is one of the fundamental problems in the study of random graphs. In particular,   it plays a crucial role in the research on the classical $\Gnp$ model (see \cite{BKR,   FK} and references therein) and still attracts a significant interest \cite{CD,   ER,   GGS,   HMS,   PSbej}. The long standing problem of normal approximation bounds in Kolmogorov distance has been resolved just recently \cite{PSbej},   see also \cite{ER,   R,   Z} for different approaches. 
In this article,  we address the small subgraph count in  the random intersection graph model $\Gnmp$,  that   was introduced by Karo\'nski,   Scheinerman and Singer--Cohen in \cite{1999KarSchSin}.  Unlike in the classical binomial Erd\H{o}s--R\'enyi random graph,   in $\Gnmp$ there are significant dependencies between edges,   which is manifested by the fact that for a wide range of parameters the random intersection graph has statistically more cliques than the Erd\H{o}s--R\'enyi random graph. The abundance of cliques,   edge dependencies,   and other features of random intersection graphs have resulted in the model and its generalisations finding many applications such as: ``gate matrix layout'' for VLSI design \cite{1999KarSchSin},   cluster analysis and classification \cite{RIGGodehardt1},   analysis of complex networks  \cite{RIGClustering2,   RIGTunableDegree},   secure wireless networks  \cite{WSNphase2} or epidemics \cite{GpEpidemics}. For more details,   we refer the reader to the survey papers \cite{RIGsurvey1,  RIGsurvey2, Spirakis2013}. 

Due to the  edge dependencies,   studying random intersection graph in the context of small subgraphs turns out to be much more challenging comparing to the binomial model. Even the analysis of the distribution of the number of edges in $\Gnmp$ is non-trivial,   whereas in the $\Gnp$ model,   the number of edges follows a simple binomial distribution. This complexity of the model has been already revealed in the form of  thresholds for existence of at least one copy   of a given graph \cite{1999KarSchSin}. Concerning further results in this direction,    the distribution of the number of copies of small subgraphs in $\Gnmp$ on the threshold was studied in \cite{2010RybSta} and \cite{2017RybSta}. For some related random intersection graph models in various ranges of parameters a normal approximation of the number of triangles \cite{Bloznelis2018} and 2--connected subgraphs \cite{Bloznelis2023} was determined. However,   none of the methods used in these works could be adopted to determine conditions for asymptotic normality of the number of small cliques in $\Gnmp$. 
Only recently some results concerning the normal distribution of the number of triangles in a random intersection graph have been proved by Dong and Hu \cite{Dong2023}. They employed the Stein method to find an upper bound on the Kolmogorov   distance between standardised random variable counting the number of triangles in $\Gnmp$ and the standard normal distribution. Although \cite{Dong2023}  is a substantial contribution to the solution of the problem,   it fails to cover the whole range of parameters. First of all,  it concerns only sparse graphs (in which edge probability tends to zero). Secondly,   for a significant range of the parameters $n,   m,   p$,   the obtained upper bound does not tend to zero,   while asymptotic normality is expected.
In particular, this concerns the case when  $m \ge n^2$, which is counterintuitive, as the larger $m$ is, the more $\Gnmp$ resembles an Erd\H{os}--R\'enyi graph, as discussed in   \cite{FillScheinSinger2000Equivalence}. Furthermore, a long line of research has shown that for  $m\gg n^3$ the graph $\Gnmp$ and Erd\H{os}--R\'enyi graph with the edge probability close to $1-(1-p^2)^{m}$ are asymptotically equivalent \cite{FillScheinSinger2000Equivalence,Rybarczyk2011Equivalence,KimLeeNa2018Equivalence,
BrennanBresterNagaraj2020Equivalence}.

The aim of this paper is to develop a new,   more efficient method for finding an upper bound on the distance between a standardised random variable counting the number of small subgraphs in $\mathcal{G}(n,  m,  p)$ and the standard normal distribution. The probabilistic ingredient of this new method builds on ideas developed earlier in \cite{PSbej,  PSejp,  PSstoch},   where the Stein method was combined with Malliavin calculus to obtain normal approximation bounds for functionals of independent random variables distributed uniformly on the interval $(-1,  1)$. These methods have been already used in the context of  small subgraphs in other random graph models \cite{MNS,   PSbej,  PSejp}; however,   they are not directly applicable to the $\mathcal{G}(n,  m,  p)$ model due to the aforementioned edge dependencies.
On the other hand,  what is essential for establishing accurate estimates of the distance, 
is the estimation of the central moments of subgraphs existence indicators. We believe that omitting centralization was one of the reasons for the gap in the result from \cite{Dong2023}. We resolve this problem by developing  new techniques for comparing clique covers,   that were introduced in \cite{1999KarSchSin} and describe precisely subgraphs existence conditions. This allows us to estimate specific types of central moments that are crucial for the final results. Additionally, the probabilistic approach described above effectively limits the number of cases considered. The method we present is not only applicable across the entire range of traditionally considered parameters $n,   m,   p$,   but it also provides tight  bounds on the convergence rate to the normal distribution, which correspond to the  necessary condition for the convergence. With some additional work,  it is expected to  be applicable to subgraph counts for complete graphs $K_t$ with $t \geq 3$.

In the article we   use the following asymptotic notation:
\begin{itemize}
\item $a_n=o(b_n) \Leftrightarrow $ $a_n/b_n\to 0$  as $n\to \infty$,
\item $a_n=O(b_n) \Leftrightarrow $ $|a_n|\le Cb_n$  for some $C>0$ and all  $n\in \N$,
\item $a_n\ll b_n \Leftrightarrow   a_n=o(b_n)$ and $a_n, b_n>0$,
\item $a_n\lesssim b_n \Leftrightarrow   a_n=O(b_n)$ and $a_n, b_n>0$,
\item $a_n\asymp b_n \Leftrightarrow   a_n\lesssim b_n$ and $ b_n\lesssim a_n$.
\end{itemize}  

We also use standard notation from graph theory.  For a graph $G=(V(G), E(G))$ we denote by $V(G)$ its vertex set and by $E(G)$ its edge set. Moreover $G\cup G'=(V(G)\cup V(G'), E(G)\cup E(G'))$ and $G\cap G'=(V(G)\cap V(G'), E(G)\cap E(G'))$. For $E'\in E(G)$ and $V'=\{v:\exists_{e\in E'}\, v\in e\}$, $G[E']=(V', E')$ is the graph induced on the edge set $E'$, and    $G-E'=(V(G),    E(G)\setminus E')$. We write $\cong$ for the graph isomorphism.

\section{Main results and discussion}
\label{sec:mainresults}

Let $m=m(n)\to \infty$ as $n\to \infty$ and $p=p(n)\in (0,  1)$.
In the random intersection graph $\Gnmp$,   we are  given  a set of vertices $\V=\{v_1, \ldots,  v_n\}$ and a set of attributes $\A=\{a_1,  \ldots,  a_m\}$. Any  attribute $a_i$,   $1\le i\le m$,  is assigned to any vertex $v_k$,  $k\in\{1, \ldots,  n\}$,  (we say that vertex $v_k$ {\sl chose} attribute $a_i$) independently with probability $p$. Then,   two vertices $v_k$ and $v_l$ are connected by an edge in $\Gnmp$ whenever they chose at least one common attribute. The existence probability of any edge will be denoted by $\hatp$ and is given by
\begin{align}\label{eq:hatp}
\hatp=1-(1-p^2)^{m}\asymp  1\wedge(mp^2).
\end{align}
Note that for $mp^2$ close to zero or infinity one can approximate $\hatp\sim  1-e^{-mp^2}$.
Furthermore,   for any vertex  $v_k$,   $k\in\{1,  ...,  n\}$,   we define a random vector $A^{(k)}$,   representing relations between $v_k$ and all the attributes,   by
$$A^{(k)}=\left(A^{(k)}_1,  \ldots,  A^{(k)}_m\right),  $$
 where $A^{(k)}_i$ is the indicator of the event $\{v_k \text{ chose }a_i\}$. By definition $\{A^{(k)}_i: k\in\{1,  \ldots,  n\},   i\in\{1,  \ldots,  m\} \}$ creates  a family of independent random variables with a Bernoulli distribution $\mathcal B(1,   p)$. 
Hence,    the distribution $\mu_{m,   p}$ of  any $A^{(k)}$ is given by
$$\mu_{m,   p}(x)=p^{|x|}(1-p)^{m-|x|},  \ \ \ \ x\in\{0,  1\}^m,  $$
where 
$$|x|=\sum_{i=1}^mx_i,  $$
as in the multi-index notation. In particular,   we have
$$|A^{(k)}| \ \ \sim \ \ \mathcal B(m,  p).$$
In order to simplify notation,  we  skip the indication of the measure space in the $L^p$-norms,  i.e.,  for any $f\in L^p\big((\{0,  1\}^m)^r,  \mu^{\otimes r}\big)$,  $p\in[0, \infty]$,  $r\in\N$,  we write $\|f\|_p$ for $\|f\|_{L^p((\{0,  1\}^m)^r,  \mu^{\otimes r})}$.

Many of the  local statistics on $\Gnmp$ might be expressed in the form of a $U$-statistic
\begin{align}\label{eq:statistic}
X=\sum_{1\le k_1<\ldots <k_r\le n}h(A^{(k_1)},  \ldots ,  A^{(k_r)}), 
\end{align}
where the function $h:{\left(\{0,  1\}^m\right)^{r}}\rightarrow\R$ is called a kernel. 
Our first goal is to examine the asymptotic normality of the standardised random variable 
$$\widetilde X:=\frac{X-\E[X]}{\sqrt{\Var[X]}}$$
 by establishing bounds on its distance to a normally distributed random variable $\mathcal N\sim \mathcal N(0,  1)$. Such quantitative central limit theorems are usually described by means of the Kolmogorov distance or the Wasserstein one.  For two random variables $X$ and $Y$ we define the Kolmogorov distance $d_K(X,  Y)$ between them as
\begin{align*}
d_{K}(X,  Y):=\sup_{t\in \R}\big|\P\(X\le t\)-\P(Y\le t)\big|,  
\end{align*}
while the Wasserstein distance $d_W(X,  Y)$ is given by
\begin{align*}
d_{W}(X,  Y):=\sup_{\text{Lip}(h)\le1}\big|\E[h(X)-h(Y)]\big|,  
\end{align*}
where $\text{Lip}(h)$ stands for the   Lipschitz constant of a function $h:\R\rightarrow\R$. Since we  work with both of the distances  simultaneously,   we also denote
$$d_{K/W}(X,  Y):=\max\{d_{K}(X,  Y),   d_{W}(X,  Y)\}.$$
Let us remark that $d_{K}(X,  Y)\le1$ for any $X,  Y$ and $d_W(X,  Y)\le \E[|X-Y|]\le\sqrt{\E[X^2]}+\sqrt{\E[Y^2]}$.  Thus,  for standardised $X$ and $Y$ we have
\begin{align}\label{eq:d<2}
d_{K/W}(X,  Y)\le2.
\end{align}

For  $f:{\left(\{0,  1\}^m\right)^{k}}\rightarrow\R$,   $g:{\left(\{0,  1\}^m\right)^{l}}\rightarrow\R$ and $0\le{a\le b}\le k\wedge l$ we define a contraction
\begin{align*}
	& f\ast_{b}^ag(x_1,  \ldots ,   x_{b-a},  y_1,  \ldots,  y_{k-b},  z_1,  \ldots ,   z_{l-b})
	\\\nonumber
	& :=
	\int_{{\left(\{0,  1\}^m\right)^a}} f(w_1,  \ldots,  w_a,   x_1,  \ldots ,   x_{b-a},  y_1,  \ldots,  y_{k-b}) \\\nonumber
	&\ \ \ \ \ \ \ \ \ \ \ \ \ \ \ \ \times g(w_1,  \ldots ,   w_a,  x_1,  \ldots ,   x_{b-a},  z_1,  \ldots ,   z_{l-b})d\mu^{\otimes a}_{m,  p}(w).
\end{align*}
This type of notation was first introduced in \cite{Kabanov} and has since become widely adopted.
Furthermore,   by
\begin{align}\label{eq:bar}
\overline h(x):=h(x)-\int_{\left(\{0,  1\}^m\right)^{k}}h(y)d\mu_{m,  p}^{\otimes k}(y),  \ \ \ x\in \(\{0,  1\}^m\)^{k}
\end{align}
we denote a  centralization of the function $h$. We are now in a position to state the first main result of the article.
\begin{theorem}\label{thm:prob->discrete}
	Let $X$ be of the form \eqref{eq:statistic} 
	with symmetric $h\in L^4\big((\{0,  1\}^m)^r,   \mu_{m,  p}^{\otimes r}\big)$. Then we have
	\begin{align*}
		&d_{K/W}\left(\frac{X-\E [X]}{\sqrt{\Var[X]}},  \mathcal N\right)\\
		&\le \frac{C_rn^{2r}}{\Var[X]}\Bigg({\sum_{0\le i<j\le r}}
		\frac{\| \overline h_j\ast_j^i  \overline h_j\|^2_2}{n^{3j-i}}+{\sum_{1\le i<j\le r}}\bigg(\frac{\| \overline h_j\ast_i^i  \overline h_j\|^2_2}{n^{2j}}
		+\frac{\| \overline h_j\ast_i^i  \overline h_i\|^2_2}{n^{j+i}}\bigg)\Bigg)^{1/2},  
	\end{align*}
	where $C_r$ is a  constant depending on $r$,   
	 and
	\begin{align}\label{eq:defh_j}
	h_j(x_1,  \ldots ,  x_j)=\int_{\left(\{0,  1\}^m\right)^{r-j}}h(x_1,  \ldots ,  x_r)d\mu_{m,  p}^{\otimes(r-j)}(x_{j+1},  \ldots,   x_r).
	\end{align}
\end{theorem}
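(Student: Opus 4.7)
The plan is to apply a general Stein--Malliavin-type normal approximation bound for functionals of the independent random vectors $A^{(1)},\ldots,A^{(n)}$ on the product Bernoulli space $(\{0,1\}^m,\mu_{m,p})$, developed in \cite{PSbej,PSejp,PSstoch}. Regarding each $A^{(k)}$ as a single input coordinate, that machinery yields an abstract inequality of the form
\begin{align*}
d_{K/W}(\widetilde X,\mathcal N)\le \frac{C_r}{\Var[X]}\left(\text{sum of squared contraction norms of the chaos components of }X-\E[X]\right)^{1/2}.
\end{align*}
The remainder of the argument is then a combinatorial translation: expand $X-\E[X]$ in its Hoeffding chaos decomposition, identify the resulting contractions, and rewrite them in terms of the quantities $\|\overline h_j\ast_b^a\overline h_k\|_2$ appearing in the statement.

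First, I would carry out the Hoeffding decomposition of the centered symmetric kernel,
\begin{align*}
\overline h(x_1,\ldots,x_r)=\sum_{j=1}^r \sum_{\substack{J\subseteq\{1,\ldots,r\}\\|J|=j}}\widetilde h_j\bigl((x_k)_{k\in J}\bigr),
\end{align*}
where the completely degenerate kernels $\widetilde h_j$ are obtained from the partial projections $h_i$, $i\le j$, via inclusion--exclusion and satisfy $\int\widetilde h_j\,d\mu_{m,p}=0$ in each argument. Plugging this into \eqref{eq:statistic} and reorganising the double summation over $r$- and $j$-subsets of $\{1,\ldots,n\}$ yields
\begin{align*}
X-\E[X]=\sum_{j=1}^r\binom{n-j}{r-j}\sum_{1\le k_1<\ldots<k_j\le n}\widetilde h_j(A^{(k_1)},\ldots,A^{(k_j)}),
\end{align*}
which plays the role of a Wiener chaos decomposition in this Bernoulli setting.

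Next, I would evaluate the contraction norms appearing in the abstract bound. For two degenerate components of orders $j$ and $k$, the relevant inner products split into sums over pairs of index sets $K_1,K_2\subseteq\{1,\ldots,n\}$ with $|K_1|=j$, $|K_2|=k$. Variables indexed outside $K_1\cap K_2$ integrate to zero by degeneracy, so only the overlap pattern matters; standard counting shows that the number of ordered pairs $(K_1,K_2)$ with $|K_1\cap K_2|=a$ is of order $n^{j+k-a}$, while the corresponding inner product is controlled via Cauchy--Schwarz by $\|\widetilde h_j\ast_b^a\widetilde h_k\|_2$ for the appropriate identification parameter $b$. Passing from $\widetilde h_j$ back to $\overline h_j$ by inverting the inclusion--exclusion of the first step costs only a constant depending on $r$.

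The main obstacle is the combinatorial bookkeeping in this last stage: one has to identify exactly which overlap regime $(a,b)$ generates each of the three families of terms in the statement, and verify that the accumulated powers of $n$ reassemble into $n^{2r-(3j-i)}$, $n^{2r-2j}$ and $n^{2r-(j+i)}$ after the prefactor $n^{2r}$ from counting pairs of $r$-subsets has been pulled out. Care is also needed to keep all remaining constants dimension-free in $n$ and $m$, so that only a single $C_r$ depending on $r$ survives in the final bound.
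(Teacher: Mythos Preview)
Your outline follows the same overall strategy as the paper: obtain a chaos (Hoeffding) decomposition in terms of completely degenerate kernels, apply an abstract contraction-norm bound, and then pass from the degenerate kernels to the simply centred $\overline h_j$. However, two steps that you treat as routine are precisely where the paper invests most of its effort.

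First, the bound you invoke from \cite{PSejp,PSstoch} is not stated for functionals of an arbitrary product probability space; it is formulated for multiple stochastic integrals built from i.i.d.\ uniform $(-1,1)$ variables. The paper therefore does not apply it directly to the Bernoulli inputs $A^{(k)}$, but constructs a measurable map $a:(0,1)\to\{0,1\}^m$ with $a(U)\stackrel{d}{=}A^{(k)}$ and embeds the $U$-statistic into that continuous framework. The powers of $n$ in the statement then do not arise from ``counting ordered pairs $(K_1,K_2)$ with given overlap'' as you suggest, but from the support of the chaos kernels $\hat f_j$ on $[0,2n]^j$ together with the prefactor $(n-j)_{r-j}$ appearing in the decomposition after this embedding (see \eqref{eq:f=h}).

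Second, and more seriously, the passage from your degenerate $\widetilde h_j$ (the paper's $\db h_j$) to $\overline h_j$ is not a mere inversion of inclusion--exclusion costing a constant. For the contractions $\|\db h_j\ast_j^i\db h_j\|_2$ and $\|\db h_j\ast_i^i\db h_j\|_2$ one indeed gets $\lesssim\|\overline h_j\ast_\cdot^\cdot\overline h_j\|_2$ via variance-reduction inequalities. But for the cross term the paper can only prove (Lemma~\ref{lem:db<})
\[
\|\db h_j\ast_i^i\db h_i\|_2\ \lesssim\ \|h_{j-i}\|_2\,\Big|\textstyle\int h\,d\mu_{m,p}^{\otimes r}\Big|+\sum_{s=1}^i\|h_{j-i+s}\ast_s^s h_s\|_2,
\]
which in general is \emph{not} bounded by $\|\overline h_j\ast_i^i\overline h_i\|_2$. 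One must apply this with $h$ replaced by $\overline h$ so that the first term vanishes (since $\int\overline h=0$), and then reindex the remaining sum via $(j',i')=(j-i+s,s)$, using $n^{-(j+i)}\le n^{-(j'+i')}$, to recover the term $\sum_{1\le i<j\le r}n^{-(j+i)}\|\overline h_j\ast_i^i\overline h_i\|_2^2$ of the statement. This lemma and the subsequent reindexing---rather than the overlap counting you sketch---are the actual substance of the proof.
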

Let us note that one may strengthen the assertion of Theorem  \ref{thm:prob->discrete} by replacing the functions  $\overline h_j$ by their coordinate-wise centralisations $\db h_j$ (see \eqref{eq:dbh}),   which is presented in Theorem \ref{thm:generaldb}.  Nevertheless,   in many cases such a change  significantly increases complexity of calculations and not necessarily improves the order of the bound. On the other hand,   one can also waive any centralisation of $h_j$'s,   however,   in that case some additional  terms may appear,  that are related to the last bound in Lemma \ref{lem:db<}. Furthermore,   modifying the proof one can generalize Theorem  \ref{thm:prob->discrete} by considering kernels $h$ depending on vertices $v_{k_1},  \ldots,   v_{k_r}$. We decided against  this idea in order to keep the results simple and easy to apply.

As it is often the case with results of this type,  it is unclear whether the bounds given in Theorem \ref{thm:prob->discrete} are tight. This will be verified using the example of the number of edges in the $\Gnmp$ model. Indeed,  the bounds presented in the following theorem correspond to the conditions for asymptotic normality. Moreover,  the exponent $1/2$ in \eqref{eq:mainbound2} is generally considered optimal,  as it matches both the classical Berry-Esseen theorem and the sharpest results obtained for the celebrated $\Gnp$ model \cite{PSbej}.

\begin{theorem}\label{thm:Main} 
Let $N_E$ be the random variable counting edges in $\Gnmp$ and let $\mathcal N\sim\mathcal N(0,  1)$ be a normally distributed random variable. If $m=m(n)\rightarrow \infty $ as $n\rightarrow\infty$ and   $p=p(n)\le 0.1$,  then we have
$$\frac{ N_E-\E[N_E]}{\sqrt{\Var[N_E]}}\stackrel{\mathcal D}{\longrightarrow}\mathcal N \ \ \ \Longleftrightarrow \ \ \ n^2\hatp(1-\hatp)\rightarrow\infty, $$
and the convergence rate is bounded by
\begin{align}\label{eq:mainbound1}
d_{K/W}\left(\frac{ N_E-\E[N_E]}{\sqrt{\Var[N_E]}},  \mathcal N\right)\le C \left(\frac1{n^2\hatp(1-\hatp)}\right)^{1/4}, \ \ \ \ n\ge n_0, m\ge m_0,
\end{align}
for some $C,  n_0, m_0>0$. Additionally,  if $mp^3\le1$,  the following  sharper bound holds true
\begin{align}\label{eq:mainbound2}
d_{K/W}\left(\frac{ N_E-\E[N_E]}{\sqrt{\Var[N_E]}},  \mathcal N\right)\le C\left(\frac1{n^2\hatp(1-\hatp)}+\frac1n+\frac1{m}\right)^{1/2}.
\end{align}
\end{theorem}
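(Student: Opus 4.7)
The plan is to apply Theorem~\ref{thm:prob->discrete} with $r=2$ to the number-of-edges statistic. Two vertices of $\Gnmp$ are joined precisely when they share an attribute, so one sets $h(x,y)=1-\prod_{i=1}^{m}(1-x_iy_i)$, and then $N_E=\sum_{1\le k<l\le n}h(A^{(k)},A^{(l)})$ has the form \eqref{eq:statistic} with $\|h\|_\infty\le 1$. Before invoking the theorem I would first pin down the denominator: since two edges sharing no vertex are independent, a Hoeffding-type decomposition yields $\Var[N_E]=\binom{n}{2}\hatp(1-\hatp)+2\binom{n}{2}(n-2)\|\overline{h}_1\|_2^2$, with $h_1(x)=1-(1-p)^{|x|}$. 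Using $|A^{(k)}|\sim\mathcal B(m,p)$ both terms can be evaluated in closed form, which in particular produces the lower bound $\Var[N_E]\gtrsim n^2\hatp(1-\hatp)$ that is precisely the denominator appearing in \eqref{eq:mainbound1} and \eqref{eq:mainbound2}.

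The core of the argument is to control the five contraction norms appearing in Theorem~\ref{thm:prob->discrete} for $r=2$, namely $\|\overline{h}_1*_1^0\overline{h}_1\|_2^2=\|\overline{h}_1\|_4^4$, $\|\overline{h}_2*_2^0\overline{h}_2\|_2^2=\|\overline{h}\|_4^4$, $\|\overline{h}_2*_2^1\overline{h}_2\|_2^2$, $\|\overline{h}_2*_1^1\overline{h}_2\|_2^2$ and $\|\overline{h}_2*_1^1\overline{h}_1\|_2^2$. The first two follow immediately from $|\overline{h}|,|\overline{h}_1|\le 1$. For each of the other three I would open the inner integral as a multilinear expression in the coordinates $A^{(k)}_i$ and then reinterpret every surviving term as the probability that a prescribed small multigraph on a handful of vertices is edgewise covered by attributes in $\Gnmp$; this is exactly the setting in which the clique-cover comparison developed in the paper delivers sharp estimates, and the cancellations produced by the centring by $\hatp$ are precisely what the method is designed to track (their absence is, in part, what limits the bound of \cite{Dong2023}).

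Plugging these estimates into Theorem~\ref{thm:prob->discrete} and dividing by $\Var[N_E]$ gives, after a short case analysis in $\hatp$, the universal rate $(n^2\hatp(1-\hatp))^{-1/4}$ of \eqref{eq:mainbound1}. Under the additional hypothesis $mp^3\le 1$, the sharper accounting obtained by replacing $\overline{h}_j$ with the coordinate-wise centralisations $\db{h}_j$ from Theorem~\ref{thm:generaldb} strips out one square root and leaves the additive $n^{-1}$ and $m^{-1}$ corrections visible in \eqref{eq:mainbound2}. The necessity of $n^2\hatp(1-\hatp)\to\infty$ I would handle by splitting off two degenerate regimes: if $n^2\hatp\lesssim 1$ then $N_E$ has a Poisson or compound-Poisson limit, whereas if $n^2(1-\hatp)\lesssim 1$ the complementary count $\binom{n}{2}-N_E$ does; in either case $\widetilde{N}_E$ lives, up to a vanishing shift, on a lattice of bounded range and cannot converge to $\mathcal N(0,1)$.

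The main technical obstacle will be the sharp estimate of $\|\overline{h}_2*_1^1\overline{h}_2\|_2^2$, the squared $L^2$ norm of the ``cherry'' kernel $(y,z)\mapsto\int\overline{h}(w,y)\overline{h}(w,z)\,d\mu_{m,p}(w)$. In $\Gnp$ the analogous quantity would essentially vanish by independence, whereas in $\Gnmp$ it carries the full correlation between two edges sharing a vertex, and only a combinatorial unfolding in terms of attribute covers produces a bound with the cancellation necessary to reach the exponent $1/2$ in \eqref{eq:mainbound2}.
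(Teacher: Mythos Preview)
Your overall architecture is right --- apply Theorem~\ref{thm:prob->discrete} with $r=2$, estimate the five contraction norms in \eqref{Eq:dkW}, and handle necessity separately --- but there are two genuine gaps.

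\textbf{The norm $\|\overline h_1\|_4^4$ is not easy.} You list $\|\overline h_1\ast_1^0\overline h_1\|_2^2=\|\overline h_1\|_4^4$ among the two norms that ``follow immediately from $|\overline h_1|\le 1$''. The trivial consequence $\|\overline h_1\|_4^4\le\|\overline h_1\|_2^2$ is not good enough: this norm carries the prefactor $n^5$, and with, say, $p=m^{-1/3}$ and $m=n^{3/2}$ (so $mp^3=1$, $mp^2=n^{1/2}$) one has $\|\overline h_1\|_2^2\asymp e^{-2n^{1/2}}$ while $\Var[N_E]\asymp n^2e^{-n^{1/2}}$, giving $n^5\|\overline h_1\|_2^2/(\Var[N_E])^2\asymp n\to\infty$. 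In the paper $\|\overline g_1\ast_1^0\overline g_1\|_2^2$ is one of the three \emph{hard} norms (Lemma~\ref{Lem:Integrals1}); it is associated to the star $K_{1,4}$ and requires the full clique-cover cancellation to reach $mp^5e^{-4mp^2}+(mp^3)^2e^{-4mp^2}$. Conversely, $\|\overline h_2\ast_2^1\overline h_2\|_2^2$, which you put in the hard group, is in fact handled by a two-line analytic argument (Lemma~\ref{lem:2norms}).

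\textbf{The regime $mp^3>1$ needs a separate mechanism.} Your clique-cover expansion (and the estimates in Lemma~\ref{Lem:CCprobability}) only yield usable bounds when $mp^3\le1$. For $mp^3\ge1$ with $p\le0.1$ the paper does \emph{not} push the same computation through; it switches to the complement kernel $\varrho=1-g$, computes the three hard norms exactly as subgraph probabilities in $\cGnmp$ (Lemma~\ref{lem:normsrho}), and then compares $(1-p+p(1-p)^4)^m$ against $(1-2p^2+p^3)^{2m}$ (Corollary~\ref{cor:mp^3>1}). Without this case split \eqref{eq:mainbound1} does not follow. Relatedly, the sharper exponent in \eqref{eq:mainbound2} is \emph{not} obtained by passing from $\overline h_j$ to $\db h_j$ as you suggest; both bounds in Lemma~\ref{lem:mp^3<1} use the same $\overline h_j$ estimates, and the $1/4$ versus $1/2$ difference comes only from how crudely one bounds the residual term $\frac{1}{nmp}\wedge\frac{mp^2(1-\hatp)}{m\hatp}$ in \eqref{eq:best}.

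Finally, your necessity argument via Poisson/compound-Poisson limits would work but is heavier than needed: the paper simply observes that $\widetilde N_E\ge-\E[N_E]/\sqrt{\Var[N_E]}$ together with the full support of $\mathcal N$ forces $n^2\hatp\to\infty$, and symmetrically for $n^2(1-\hatp)$ (Lemma~\ref{lem:necessary}).
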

The quantity $n^2\hatp(1-\hatp)$ is comparable to $\E[N_E]\wedge \E\[{n\choose 2}-N_E\]$,   where ${n\choose 2}-N_E$ might be interpreted as the number of non-edges (edges that do not exist). An analogous expression appears in the case of the $\Gnp$ model,   and the condition translates into $a)$ we need a lot of edges so that the asymptotic distribution is continuous and $b)$ we need a lot of non-edges so that the random graph is not to close to the (deterministic) complete graph. These conditions are therefore rather weak and expected. Furthermore,  they are necessary  for any $p=p(n)$,  and not only for $p\le0.1$ (see Lemma \ref{lem:necessary}).  On the other hand,   veryfying  their sufficiency turns out  to be a particularly  challenging task.   This is a fundamental  difference between the $\Gnmp$ and $\Gnp$ models,   as in the $\Gnp$ model the distribution of the  number of edges is simply  the binomial one $\mathcal B\big({n\choose 2},  p\big)$,  and hence some classical probabilistic results apply.

Theorem \ref{thm:Main}  covers and extends the range of parameters considered usually in the literature such as $mp^2\rightarrow0$ or $mp^3\rightarrow0$. Nevertheless,  it is still natural to ask what happens for $p>0.1$. Apparently,  this case is quite complex,  so we present only some partial results. First of all,  in Proposition \ref{prop:K14} we derive the bound
\begin{align}\label{aux10}
d_{K/W}(\widetilde N_E ,   \mathcal{N})
\lesssim \(\frac1{n^2\hatp(1-\hatp)}+\frac1n+\frac{n^{5}\, {\P\(K_{1, 4}\subseteq \cGnmp\)}}{\Var^2 [N_E]}\)^{1/2}, 
\end{align}
valid whenever $mp^3\geq1$,  where $K_{1, 4}$ is a star with four leaves on some given vertices (note that the event $\{K_{1, 4}\subseteq \cGnmp\}$ is often understood as existence of at least one copy of $K_{1, 4}$ in  $\overline{\Gnmp}$,  which is not the case). The last term in \eqref{aux10} diverges to infinity for $p$ close enough to $1$,  even if  $n^2\hatp(1-\hatp)\rightarrow\infty$ and $\E[N_E]^2/\Var[N_E]\rightarrow\infty$,  where the latter condition is a necessary and sufficient condition for the asymptotic normality  in the $\Gnp$ model \cite{Ruc},  and might be proven as a necessary condition in $\Gnmp$ as well,   see \cite[proof of Theorem 3.5]{MNS}. Furthermore,  the same term turns out to determine the order of  $\E[\widetilde N_E^4]$ for $p\rightarrow1$. Thus,  the fourth moment of $\widetilde N_E$ blows up,  which may suggest that asymptotic normality of $\widetilde N_E$ does not hold then. 

Regarding intermediate values of $p$,  in Proposition \ref{prop:mlnn} we show existence of some kind of threshold function. Precisely,  for $\varepsilon\in(0,  \tfrac12)$ and  $p=p(n)\in\(\varepsilon,  1-\varepsilon\)$ it holds that 
$$
\begin{array}{ll}
\widetilde N_E\stackrel{\mathcal D}{\rightarrow}\mathcal N, & \text{ if }\frac{m}{\ln n}\rightarrow0,  \\
\widetilde N_E\stackrel{\mathcal D}{\not\rightarrow}\mathcal N, & \text{ if }\frac{m}{\ln n}\rightarrow\infty.
\end{array}
$$
The latter condition is already known,  since for $p=\sqrt{
(2 \ln n + \omega)/m}$,  where $\omega\rightarrow\infty$,  with high probability $\Gnmp$ is complete \cite{1999KarSchSin}. The behaviour of the distribution of $N_E$ for $m\asymp  \ln n$ requires a deep analysis,  and we leave it here as an open problem.

The remaining part of the article is organised as follows. In Section~\ref{sec:preliminaries},  we lay the groundwork for further reasoning. In particular,  we derive an asymptotic form of the variance of the number of edges $N_E$ in $\Gnmp$. Then,  we introduce notation   that allows us to relate Theorem~\ref{thm:Main} to Theorem~\ref{thm:prob->discrete}. In Section~\ref{sec:proofProbabilistic},  we prove Theorem~\ref{thm:prob->discrete},  which is achieved by applying a general bound for independent random variables from \cite{PSejp}  and by comparing two types of centralizations introduced in \eqref{eq:bar} and \eqref{eq:dbh}. In Sections~\ref{sec:CCs} and \ref{sec:GraphicRepr},  we develop combinatorial techniques that allow us to analyse integrals arising from Theorem~\ref{thm:prob->discrete} in the fundamental case $mp^3\le 1$. First,  in Section~\ref{sec:CCs},  we discuss subgraph probabilities in $\Gnmp$. We generalize some known results and establish new ones that are essential in estimating central moments of subgraphs existence indicators. Section~\ref{sec:GraphicRepr} contains the main novel ideas that lead to precise estimates for the aforementioned integrals related to Theorem~\ref{thm:prob->discrete} in the case $mp^3\le 1$. We discover and analyse a relationship between these integrals  and formulas involving probabilities of suitable subgraphs in $\Gnmp$. Section~\ref{sec:MainProof} is devoted to the proof of Theorem~\ref{thm:Main} along with analysis of necessary and sufficient conditions for the asymptotic normality of $\widetilde N_E$.  For small values of $mp^3$,  the distance estimates rely mainly on results from Section~\ref{sec:GraphicRepr},    whereas the case of large $mp^3$ is approached by considering the complement of $\Gnmp$. Additional discussion is provided as well.

\section{Preliminaries}
\label{sec:preliminaries}

The expected value of the number of edges $N_E$ is clearly given by $\E[N_E]={n\choose 2}\hatp$. However, the variance of $N_E$ cannot be expressed solely in terms  of $n$ and $\hatp$. In addition to the term ${n\choose 2}\hatp(1-\hatp)$,  which represents the variance of the number of edges in the $\Gnp$ model,  another term is required. Below,  we derive  
 the exact formula and two-sided estimates for the variance of $N_E$.

\begin{lemma}\label{Lem:Variance}
	Let $N_E$ be the number of edges in $\Gnmp$,  and $n,  m\geq3$. We have
	\begin{align}\label{eq:vareq}
		\Var [N_E] &={n\choose 2}\hatp(1-\hatp)+{6}{n\choose 3}\[\big(1-2p^2+p^3\big)^m-(1-\hatp)^{2}\]\\[5pt]
		\label{eq:varest}
		&\asymp  n^2\hatp(1-\hatp)+n^3 (1-2p^2+p^3)^m\[1\wedge (mp^3)\].
	\end{align}
	In particular,  if $mp^3\le 1$,  then 
	\begin{equation}\label{eq:var-mp3small}
		\Var [N_E] {\, \asymp\,  } n^2\hatp(1-\hatp)+n^3mp^3(1-\hatp)^2.
	\end{equation}
\end{lemma}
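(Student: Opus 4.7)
The plan is to expand $N_E=\sum_{1\le k<l\le n} I_{kl}$, where $I_{kl}$ is the indicator that vertices $v_k$ and $v_l$ share at least one attribute, and compute the variance by summing variances and covariances. Each indicator has $\Var[I_{kl}]=\hat p(1-\hat p)$, contributing $\binom{n}{2}\hat p(1-\hat p)$. For the covariances, two cases arise: if the pairs $\{k,l\}$ and $\{k',l'\}$ are disjoint, the indicators depend on disjoint collections of the attribute vectors $A^{(\cdot)}$, so they are independent and contribute $0$; if the pairs share exactly one vertex, which happens for exactly $6\binom{n}{3}$ ordered pairs (three edges of each triangle taken in order), they contribute a nonzero covariance that we must compute.

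To evaluate this covariance, say for pairs $\{k,l\}$ and $\{k,l'\}$, I would condition on $A^{(k)}$. Given $A^{(k)}$, the indicators $I_{kl}$ and $I_{kl'}$ become independent and each equals $1-(1-p)^{|A^{(k)}|}$ in expectation. Since $|A^{(k)}|\sim\mathcal B(m,p)$, a direct binomial computation yields $\E[(1-p)^{|A^{(k)}|}]=(1-p^2)^m=1-\hat p$ and $\E[(1-p)^{2|A^{(k)}|}]=(p(1-p)^2+(1-p))^m=(1-2p^2+p^3)^m$. Expanding $\E[(1-(1-p)^{|A^{(k)}|})^2]$ and subtracting $\hat p^2$ gives $\cov(I_{kl},I_{kl'})=(1-2p^2+p^3)^m-(1-\hat p)^2$, which combined with the counting above yields the exact formula \eqref{eq:vareq}.

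For the two-sided estimate \eqref{eq:varest}, set $A:=1-2p^2+p^3$ and $B:=(1-p^2)^2$. A direct calculation gives the factorisations $A=(1-p)(1+p-p^2)$ and $B=(1-p)^2(1+p)^2$, so $A-B=p^3(1-p)\ge 0$ and
\[
\frac{B}{A}=1-\frac{p^3}{1+p-p^2},\qquad \text{with } 1+p-p^2\asymp 1 \text{ for } p\in(0,1).
\]
Writing $A^m-B^m=A^m\bigl(1-(B/A)^m\bigr)$ and applying the elementary bound $1-(1-x)^m\asymp 1\wedge(mx)$ for $x\in[0,1]$, one obtains $A^m-B^m\asymp A^m\bigl[1\wedge(mp^3)\bigr]$. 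Combining this with $\binom{n}{2}\asymp n^2$, $\binom{n}{3}\asymp n^3$ gives \eqref{eq:varest}.

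Finally, for \eqref{eq:var-mp3small} under the assumption $mp^3\le 1$, it remains to replace $A^m$ by $(1-\hat p)^2=B^m$. Since $m\ge 3$ and $mp^3\le 1$ force $p$ bounded away from $1$, the quantity $y:=p^3/(1+p-p^2)\asymp p^3$ satisfies $my\lesssim 1$, and from $(A/B)^m=(1-y)^{-m}=\exp(-m\ln(1-y))$ with $-m\ln(1-y)\lesssim my\lesssim 1$ we conclude $A^m\asymp B^m=(1-\hat p)^2$. The main obstacle throughout is controlling the ratio $A^m/B^m$ uniformly in $p\in(0,1)$, in particular near $p=1$ where both $A$ and $B$ vanish; the factorisation $A-B=p^3(1-p)$ and the identity $B/A=1-p^3/(1+p-p^2)$ are precisely what isolates the scale $mp^3$ and makes the estimate uniform.
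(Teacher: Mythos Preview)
Your proof is correct and follows essentially the same approach as the paper: expand the variance into diagonal terms plus covariances of edge-sharing pairs, compute the covariance explicitly to get \eqref{eq:vareq}, then estimate $A^m-B^m=A^m\bigl(1-(B/A)^m\bigr)$ via the identity $B/A=1-p^3/(1+p-p^2)$ to extract the scale $1\wedge(mp^3)$. The only (minor) difference is in how you compute $\cov(I_{kl},I_{kl'})$: you condition on the common vertex's attribute vector and use the moment-generating-function identity for $(1-p)^{|A^{(k)}|}$, whereas the paper passes to the complement $\cov(1-I_{kl},1-I_{kl'})$ and directly evaluates $\P(e_1,e_2\notin\Gnmp)$ attribute by attribute; both routes yield $(1-2p^2+p^3)^m-(1-\hatp)^2$ in one line.
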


\begin{proof}
Since edges connecting separate pairs of vertices are independent,  we get

\begin{align}\nonumber
	\Var [N_E] &= \sum_{e}\Var [\ind_{e}] + \sum_{e,  e': |e\cap e'|=1}\cov (\ind_{e},  \ind_{e'})\\\label{Eq:Var01}
	&= {n\choose 2}\Var [\ind_{e_1}] + {6}{n\choose 3}\cov (\ind_{e_1},  \ind_{e_2}),  
\end{align}
where $\ind_{e}$  stands for the indicator of the existence of an edge $e$,  and $e_1$ and $e_2$ are two edges with one common vertex.
We clearly have
\begin{align}\label{eq:VarI_e}
\Var [\ind_{e_1}]=\E[\ind_{e_1}]-(\E[\ind_{e_1}])^2=\hatp -\hatp^2, 
\end{align}
while the covariance of $\ind_{e_1}$ and $\ind_{e_2}$ takes the form
\begin{align*}
	\cov (\ind_{e_1},  \ind_{e_2})
	&=\cov ((1-\ind_{e_1}), (1-\ind_{e_2}))\\
	&=\E [(1-\ind_{e_1})(1-\ind_{e_2})] - \E (1-\ind_{e_1})\E (1-\ind_{e_2})\\
	&=\P\(e_1,  e_2\not\in \Gnmp\)- \(1-\hatp\)^2.
\end{align*}
The simultaneous non-existence of the edges $e_1$ and $e_2$ occurs whenever any attribute $a$ is not chosen by both vertices of $e_1$ or by both vertices of $e_2$. Thus,  each attribute is either not chosen by the common vertex $v$ of $e_1$ and $e_2$,  or it is chosen by $v$ but not by the other two endpoints of $e_1,  e_2$. Thus,  we conclude
\begin{align}\label{eq:ee}
\P\(e_1,  e_2\not\in \Gnmp\)=\big(1-p+p(1-p)^2\big)^m=\big(1-2p^2+p^3\big)^m, 
\end{align}
and consequently 
$$\cov (\ind_{e_1},  \ind_{e_2})=\big(1-2p^2+p^3\big)^m-(1-\hatp)^{2}, $$
which proves \eqref{eq:vareq}. In order to estimate the difference above,  we focus on  the ratio
\begin{align}\label{eq:ratio1}
\frac{(1-\hatp)^{2}}{\big(1-2p^2+p^3\big)^m}=
{\left(\frac{1-2p^2+p^4}{1-2p^2+p^3}\right)^m=}
\left(1-\frac{p^3}{1+p(1-p)}\right)^m.
\end{align}
Using the inequalities $e^{-u/(1-u)}\le 1-u\le e^{-u}$,  {$u\in (0,  1)$},  we may bound
\begin{align}\label{eq:ratio2}
e^{-mp^3/(1-p)}\le \left(1-\frac{p^3}{1+p(1-p)}\right)^m\le e^{-mp^3/2}.
\end{align}
Since $1-e^{-mp^3/(1-p)}\asymp  1-e^{-mp^3/2}\asymp  1\wedge (mp^3)$,  we arrive at
\begin{align*}
	\cov (\ind_{e_1},  \ind_{e_2})&=(1-2p^2+p^3)^m\[1-\left(1-\frac{p^3}{1+p(1-p)}\right)^m\]\\
	&\asymp  (1-2p^2+p^3)^m\[1\wedge (mp^3)\].
\end{align*}
Applying this and \eqref{eq:VarI_e} to \eqref{Eq:Var01} we obtain the first part of the assertion. 
The other one follows then by \eqref{eq:ratio1} and \eqref{eq:ratio2},  since for $mp^3\le 1$ one has $p\leq m^{-1/3}\leq 3^{-1/3}<1$.
\end{proof}

Next,  we will represent the variable $N_E$ in the form \eqref{eq:statistic}.  Let us introduce a function $g:\{0,  1\}^m\times \{0,  1\}^m\rightarrow\{0,  1\}$ given by
$$g(x_1, \ldots ,  x_m,  y_1,  \ldots ,  y_m)=\left\{
\begin{array}{ll}
	1&\text{ if }  \exists_{i\in\{1,  \ldots ,  m\}}x_i=y_i=1,  \\
	0&\text{ else}.
\end{array}
\right.$$
Then,   the edge $\{v_k,  v_l\}$ exists in $\Gnmp$ if and only if $g(A^{(k)},  A^{(l)})=1$. Thus,    the combined number of edges  may be expressed as follows
\begin{align*}
	N_E=\sum_{1\le k<l\le n}g(A^{(k)},  A^{(l)}).
\end{align*}
For $x,  y\in \{0,  1\}^m$,  from \eqref{eq:bar} and \eqref{eq:defh_j} we have
\begin{align*}
	g_2(x,  y)&=g(x,  y),&g_1(x), &=\int_{\{0,  1\}^m}g(x,  y)d\mu_{m,  p}(y),  \\
	\overline g_2(x,  y)&=g(x,  y)-
	\hatp, &\overline g_1(x)&=g_1(x)-\hatp.
\end{align*}
Then,    Theorem~\ref{thm:prob->discrete} ensures existence of an absolute constant $C'>0$ such that
\begin{align}\label{Eq:dkW}
	d_{K/W}(\widetilde N_E ,   \mathcal{N})
	&\le \frac{C'}{\Var [N_E]} 
	\Bigg(
	{n^2\left\|\overline g_{2}\ast_{2}^0 \overline g_{2}\right\|_{2}^2}
	+{n^{3}\left\|\overline  g_{2}\ast_{2}^1 \overline  g_{2}\right\|_{2}^2}
	\\\nonumber
	&\phantom{\le \frac{C'}{\Var^2 [N_E]} \Bigg(}\ \ +{n^{5}\left\| \overline g_{1}\ast_{1}^0 \overline g_{1}\right\|_{2}^2}
	+
	{n^4\left\| \overline g_{2}\ast_{1}^1 \overline g_{2}\right\|_{2}^2}
	+
	{n^{5}\left\| \overline  g_{2}\ast_{1}^1 \overline  g_{1}\right\|_{2}^2}
	\Bigg)^{1/2}.
\end{align}

Additionally,  we define the function $\varrho=1-g$. The variable 
\begin{align*}
	V_E=\sum_{1\le k<l\le n}\varrho(A^{(k)},  A^{(l)})
\end{align*}
counts the number of edges that do not appear in $\Gnmp$ or,  in other words,  the number of edges in the complement $\cGnmp$ of the graph $\Gnmp$. In particular,  we have $V_E={n\choose 2}-N_E$,  and therefore $\widetilde N_E=-\widetilde V_E$. Due to the symmetry of the standard normal distribution,  we have
\begin{align}\label{eq:dN=dV}
d_{K/W}(\widetilde N_E,  \mathcal N) =d_{K/W}(\widetilde V_E,  \mathcal N), 
\end{align}
so we may equivalently consider $V_E$ instead of $N_E$ in the context of normal approximation,  which   is beneficial for some ranges of parameters.

Similarly as in the case of the function $g$,  it holds that
\begin{align*}
	\varrho_2(x,  y)&=\varrho(x,  y), &\varrho_1(x)&=\int_{\{0,  1\}^m}\varrho(x,  y)d\mu_{m,  p}(y)=1-g_1(x),  \\
	\overline \varrho_2(x,  y)&=\varrho(x,  y)-
	(1-\hatp)=-\overline g_2(x, y), &\overline \varrho_1(x)&=\varrho_1(x)-(1-\hatp)=-\overline g_1(x).
\end{align*}
Consequently,  one can replace any $g$ in \eqref{Eq:dkW} with $\varrho$ without changing the value of the bound. Analytically,  considering the function $\varrho$ does therefore not change anything,  however,  it allows us to look at the appearing expressions differently from probabilistic point of view. Let us note that such an approach  already turned out to be  convenient when evaluating the probability in \eqref{eq:ee}.

In the sequel,   we will demonstrate how to interpret the above norms combinatorially and,   as a result,   establish satisfactory estimates for them. The method we develop below does not require calculating the precise form of $g_1$ and $\overline g_1$,   however,   let us note that they take explicit,   relatively simple forms. Indeed,   we have
\begin{align*}
	g_1(x)&=\E[g(x,  A^{(1)})]=\P\(\exists _{k\in[m]} x_k=A^{(1)}_k=1\)=1-(1-p)^{|x|},  
\end{align*}
and consequently,   in view of \eqref{eq:hatp},   
\begin{align*}
	\overline g_1(x)&=(1-p^2)^{m}-(1-p)^{|x|}.
\end{align*}

\section{Proof of Theorem~\ref{thm:prob->discrete}} \label{sec:proofProbabilistic}

The proof of Theorem \ref{thm:prob->discrete} relies on the results established in \cite{PSejp0,   PSejp,   PSstoch}  for functionals of independent uniformly distributed random variables. In order to be able to apply them,   we need some preparation.

Let $(U_k)_{k\geq 1}$ denote the i.i.d. sequence of uniformly distributed
$(-1,  1)$-valued random variables. Given $f_N$ in the space $\widehat{L}^2(\R_+^N)$ of 
square integrable symmetric functions on $\R_+^N:=[0,  \infty)^N$ that
vanish outside of (some kind of diagonal) 
$$
\Delta_N : = \bigcup_{
	\substack{k_1, \ldots,  k_N\in\N\\k_i\neq k_j\text{ for }i\neq j}
}
[2k_1-2,  2k_1]\times \cdots \times [2k_N-2,  2k_N],   
$$
we define the multiple stochastic integral
$$ 
I_N(f_N)
= 
N!\int_0^\infty \int_0^{t_N}\cdots \int_0^{t_2}f_N(t_1,  \ldots ,  t_N)d(Y_{t_1}-t_1/2)\cdots d(Y_{t_N}-t_N/2),   
$$
with respect to the jump process 
$\displaystyle Y_t : =\sum_{k=1}^\infty\mathbf1_{[2k-1+U_k,  \infty)}(t)$,  
$t\in \real_+$. If $f_N\in \widehat{L}^2(\R_+^N)$ satisfies the condition
\begin{align}\label{ass:int0}
	\int_{2k-2}^{2k}f_N(x)dx_1=0
\end{align}
for any $k\in \N$,   the multiple stochastic integral takes the following  form
\begin{align}
	\nonumber
	I_N(f_N)&=N!\int_0^\infty \int_0^{t_N}\cdots \int_0^{t_2}f_N(t_1,  \ldots ,  t_N)dY_{t_1}\cdots dY_{t_N}\\[6pt]\label{eq:I=sum}
	&=\sum_{\substack{k_1, \ldots,  k_N\in\N\\k_i\neq k_j\text{ for }i\neq j} } f_N (2k_1-1+U_{k_1},  \ldots ,  2k_N-1+U_{k_N}). 
\end{align}
Proposition A.3 in \cite{PSstoch} states that  if $f$ does not satisfy \eqref{ass:int0},   there is a function $\hat f_N$ satisfying \eqref{ass:int0} such that $I_N(f_N)= I_N(\hat f_N)$,  which  is given by
\begin{align}
	\label{eq:barf}
	\hat{f}_N(t_1,  \ldots ,  t_N)
	= 
	\Psi_{t_1} \cdots \Psi_{t_N} f_N(t_1,  \ldots ,  t_N),  
\end{align}
where
\begin{align*}
	\Psi_{t_i}f_N(t_1,  \ldots ,   t_N):=f_N(t_1,  \ldots ,   t_N)-
	\frac{1}{2}
	\int_{2\lfloor t_i/2\rfloor}^{2\lfloor t_i/2\rfloor+2}f(t_1,  \ldots ,  t_{i-1},  s,  t_{i+1},  \ldots ,  t_N)ds,  
\end{align*}
$i=1,  \ldots ,   N$,  
$t_1,  \ldots ,  t_N \in \real_+$,  is a centralization of $f$ with respect to $t_i$ on the interval $({2\lfloor t_i/2\rfloor},  {2\lfloor t_i/2\rfloor+2})$. 

The formula \eqref{eq:I=sum} represents a general $U$-statistic and appears to be very convenient in expressing counting statistics in random graphs. However,  the function $f_N$ usually does not satisfy \eqref{ass:int0},  and then the whole sum does not coincide with any  multiple stochastic integral. Thus,  we express it as a sum of the multiple stochastic integral,  which is called the chaos  decomposition or the Hoeffding decomposition.   Namely,  by Proposition 3.1 in \cite{PSstoch} we have
\begin{align}\label{eq:sum=sumI}
	\sum_{ \substack{k_1, \ldots,  k_N\in\N\\k_i\neq k_j\text{ for }i\neq j} } f_N (2k_1-1+U_{k_1},  \ldots ,   2k_N-1+U_{k_N})=\sum_{k=0}^NI_k(f_k),  
\end{align}
where
\begin{align*}
	f_k(x_1,  \ldots ,  x_k)=\frac{1}{2^{N-k}}
	{{N}\choose{k}} \int_{(0,  \infty)^{N-j}} f_N(x_1,  \ldots ,  x_N)dx_{N-j+1}\ldots dx_{N}.
\end{align*}
Eventually,   we present normal approximation bounds for this kind of random variables. 
Let $X\in L^2 (\Omega )$ be written as a sum 
\begin{align}\label{eq:X}
	X=\sum_{i=1}^d I_i(f_i)
\end{align}
of multiple stochastic integrals,  where  every
$f_i\in \widehat{L}^2(\R_+^i)$ 
satisfies \eqref{ass:int0}. Then,   Proposition 3.2  from  \cite{PSejp} gives us
\begin{align}\label{dWK<sum}
	&d_{K/W}(X ,   \mathcal{N})  \\
	&\le   
	C_d \sqrt{
		\sum_{0\le i< j\le d}\left\|  f_{j}\star_{j}^i  f_{j}\right\|^2_{
			L^2(\R_+^{j-i})}+\sum_{1\le i< j\le d}
		\(\left\|  f_{j}\star_{i}^i  f_{j}\right\|^2_{
			L^2(\R_+^{2(j-i)})}+\left\|  f_{j}\star_{i}^i  f_{i}\right\|^2_{
			L^2(\R_+^{j-i})}\)},   
\end{align}
for some $C_d>0$,   where
\begin{align*}
	&f_i\star_k^lf_j(x_1,  \ldots,   x_{k-l},   y_1,  \ldots,   y_{i-k},   z_1,  \ldots,   z_{j-k})\\
	&=2^{-l}\int_{\R^l} f(w_1,  \ldots,  w_l,   x_1,  \ldots ,   x_{k-l},  y_1,  \ldots,  y_{i-k}) \\\nonumber
	&\ \ \ \ \ \ \ \ \ \ \ \ \ \ \times g(w_1,  \ldots ,   w_l,  x_1,  \ldots ,   x_{k-l},  z_1,  \ldots ,   z_{j-k})dw_1\ldots dw_l.
\end{align*}

We are going to adapt the above bound so that it is applicable to statistics  of the form \eqref{eq:statistic}. Hence,   let us introduce an analogue of \eqref{eq:barf} related to  the space $\big((\{0,  1\}^m)^k,   \mu_{m,  p}^{\otimes k}\big)$. Denoting the operator
$$\Phi_{x_i}h(x):=h(x)-\int_{{\{0,  1\}^m}}h(x)d\mu_{m,  p}(x_i),  $$
we define the coordinate-wise  centralization $\db h$ of the function $h:(\{0,  1\}^m)^k\rightarrow\R$  by
\begin{align}\label{eq:dbh}
	\db h(x)=\Phi_{x_1}\ldots \Phi_{x_k}h(x),  \ \ \ \ x\in(\{0,  1\}^m)^k.
\end{align}
This kind of centralization is much stronger than the classical one given  in \eqref{eq:bar},   as any  integral of $\db h$ over $\{0,  1\}^m$ with respect to any coordinate vanishes almost surely,   while in the case of $\overline h$ only the integral over the whole space $(\{0,  1\}^m)^k$ is required to be zero. The $U$-statistics whose kernels satisfy $h=\db h$ are called degenerate \cite{GGG}.

\begin{theorem} \label{thm:generaldb}
	Let $X$ be of the form \eqref{eq:statistic} 
	with symmetric $h\in L^4\big((\{0,  1\}^m)^r,  \mu_{m,  p}^{\otimes r}\big)$. Then we have
	\begin{align}\nonumber
		&d_{K/W}\left(\frac{X-\E [X]}{\sqrt{\Var[X]}},  \mathcal N\right)\\\label{eq1:prob->discrete}
		&\le \frac{C_rn^{2r}}{\Var[X]}\Bigg({\sum_{0\le i<j\le r}}
		\frac{\|\db h_j\ast_j^i\db h_j\|^2_2}{n^{3j-i}}+{\sum_{1\le i<j\le r}}\bigg(\frac{\|\db h_j\ast_i^i\db h_j\|^2_2}{n^{2j}}
		+\frac{\|\db h_j\ast_i^i\db h_i\|^2_2}{n^{j+i}}\bigg)\Bigg)^{1/2},
	\end{align}
	where $C_r>0$ is a constant depending on $r$ only,   and the functions $h_j$ are defined in 
	\eqref{eq:defh_j}.
\end{theorem}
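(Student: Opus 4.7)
The plan is to lift $X$ to the framework of \cite{PSejp,PSstoch}, apply the abstract normal-approximation bound \eqref{dWK<sum}, and then transport the resulting $L^2$-norms back to $\big((\{0,1\}^m)^r,\mu_{m,p}^{\otimes r}\big)$ via a quantile transform. For each $k\in[n]$, choose a measurable $\phi_k:(2k-2,2k)\to\{0,1\}^m$ with $\phi_k(T)\sim\mu_{m,p}$ when $T\sim\mathrm{Unif}(2k-2,2k)$; such $\phi_k$ exists since $(\{0,1\}^m,\mu_{m,p})$ is a standard Borel probability space. Setting $A^{(k)}:=\phi_k(2k-1+U_k)$ preserves the joint distribution of $X$. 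Defining
\begin{align*}
\tilde h(x_1,\ldots,x_r):=h(\phi_{k_1}(x_1),\ldots,\phi_{k_r}(x_r))\,\mathbf{1}_{\Delta_r}(x)\,\mathbf{1}\{k_1,\ldots,k_r\in[n]\},\qquad k_i:=\lceil x_i/2\rceil,
\end{align*}
the symmetry of $h$ yields $r!\,X=\sum_{k_1,\ldots,k_r\in[n]\text{ distinct}}\tilde h(2k_1-1+U_{k_1},\ldots,2k_r-1+U_{k_r})$. The chaos decomposition \eqref{eq:sum=sumI}, combined with \eqref{eq:barf}, gives $r!\,(X-\E[X])=\sum_{k=1}^{r} I_k(\hat f_k)$ with each $\hat f_k$ satisfying \eqref{ass:int0} (the $k=0$ term is the constant $r!\,\E[X]$). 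The bound \eqref{dWK<sum} then applies to the standardised variable after rescaling by $(r!\sqrt{\Var[X]})^{-1}$.

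The bulk of the proof is the translation of the $L^2$-norms. A direct computation from the integration formula for $f_k$ gives
\begin{align*}
f_k(x_1,\ldots,x_k)=\binom{r}{k}\frac{(n-k)!}{(n-r)!}\,h_k(\phi_{k_1}(x_1),\ldots,\phi_{k_k}(x_k))\,\mathbf{1}_{\Delta_k}(x)\,\mathbf{1}\{k_1,\ldots,k_k\in[n]\},
\end{align*}
and the key observation is that the operator $\Psi_{x_i}$ acting on a single block $(2k_i-2,2k_i)$ corresponds under the quantile map $\phi_{k_i}$ to the Bernoulli centraliser $\Phi_{x_i}$ from \eqref{eq:dbh}. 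Thus the same formula holds for $\hat f_k$ with $h_k$ replaced by $\db h_k$. Performing the block-wise change of variables in each contraction (the factor $2^{-\cdot}$ is absorbed by the length-two blocks) and summing over admissible index tuples in $[n]$ produces the asymptotic identities
\begin{align*}
\|\hat f_j\star_j^i\hat f_j\|_{L^2(\R_+^{j-i})}^2 &\asymp n^{4r-3j+i}\,\|\db h_j\ast_j^i\db h_j\|_2^2,\\
\|\hat f_j\star_i^i\hat f_j\|_{L^2(\R_+^{2(j-i)})}^2 &\asymp n^{4r-2j}\,\|\db h_j\ast_i^i\db h_j\|_2^2,\\
\|\hat f_j\star_i^i\hat f_i\|_{L^2(\R_+^{j-i})}^2 &\asymp n^{4r-j-i}\,\|\db h_j\ast_i^i\db h_i\|_2^2,
\end{align*}
where the exponent of $n$ on each right-hand side is the sum of contributions from the combinatorial prefactors $\binom{r}{\cdot}(n-\cdot)!/(n-r)!$ and from the free index sums outside the contracted coordinates.

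Finally, the prefactor $(r!\sqrt{\Var[X]})^{-2}$ combined with $n^{2r}$ extracted from the square root produces the $n^{2r}/\Var[X]$ in front of \eqref{eq1:prob->discrete}, and the residual powers $n^{-(3j-i)},\,n^{-2j},\,n^{-(j+i)}$ match the denominators in the statement. The main obstacle is precisely this last combinatorial bookkeeping: each contraction forces some index coordinates of $\hat f_j$ and $\hat f_i$ to agree (for the Lebesgue integration over disjoint blocks to be nonzero) while others remain free, and it is crucial that the resulting counts of admissible tuples in $[n]$ produce exactly the exponents of $n$ prescribed by \eqref{eq1:prob->discrete}, with no surplus that would spoil the bound. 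Verifying that the block-wise change of variables commutes with the contractions and that $\Psi_{x_i}$ intertwines with $\Phi_{x_i}$ under $\phi_{k_i}$ are the key technical checks.
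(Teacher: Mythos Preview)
Your proposal is correct and follows essentially the same approach as the paper's proof: both lift $X$ to the $(U_k)$ framework via a quantile map, apply the chaos decomposition \eqref{eq:sum=sumI} together with the abstract bound \eqref{dWK<sum}, identify $\hat f_k$ with $\db h_k$ through the intertwining of $\Psi_{x_i}$ and $\Phi_{x_i}$, and then push the contraction norms back to $\big((\{0,1\}^m)^{\cdot},\mu_{m,p}^{\otimes\cdot}\big)$ by block-wise change of variables. The paper carries out the block integrals explicitly to obtain one-sided inequalities (rather than your $\asymp$, which is slightly stronger than needed), but the structure of the argument and the resulting powers of $n$ are identical.
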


\begin{proof}
	Since our main tool is based on the sequence of uniformly distributed random variables,   we will show how to transform them into  random vectors $A^{(k)}$'s. Unfortunately,   we cannot treat all the coordinates of all of the vectors $A^{(k)}$'s as one sequence of independent random variables,   since then the summation index $d$ in \eqref{dWK<sum} would depend on $m$,   and so would  the constant in the distance bound.

	Let $U\sim U(0,  1)$ be uniformly distributed on $(0, 1)$ and $X$ be any random variable.  It is well known that $F_X^{-1}(U)\stackrel{d}{=}X$,  where $F_X^{-1}$ is the generalised inverse function of the distribution function $F_X(t)=\P\(X\le t\)$ of $X$. An analogous property holds true for $X$ being a random vector $X=\(X_1,  \ldots ,  X_j\)$ with independent coordinates. Namely,  let $U_i$,  $i\in\{1,  \ldots,  j\}$  be a number constructed from every $j$ digits of the decimal expansion of $U$,  starting from the position $i$,  i.e., 
	$$U_i:=\sum_{s=0}^\infty \frac{\lfloor 10^{s+i}U\rfloor-10\lfloor 10^{s+i-1}U\rfloor}{10^{s+1}}.$$ Then $\{U_i\}_{1\le i\le j}$ creates a family of independent random variables uniformly distributed on $(0,  1)$ and hence
	$$X\stackrel{d}{=}\(F_{X_1}^{-1}(U_1),  \ldots ,  F_{X_j}^{-1}(U_j)\). $$
	Taking $X=A^{(k)}$,  there exists a function $a:(0, 1)\rightarrow (\{0, 1\})^m$ satisfying
	\begin{align}\label{eq:a=A}
		a(U)\stackrel{d}{=}A^{(k)},  \ \ \ \ 1\le k\le n.
	\end{align}
	Consequently,   since the random variables $\tfrac12(U_k+1)$,   $k\geq1$,   are uniformly distributed on $(0,  1)$,    we obtain
	\begin{align*}
		X&\stackrel{d}{=}\sum_{1\le k_1<\ldots <k_r\le n}h\(a\big(\tfrac12(U_{k_1}+1)\big),  \ldots ,  a\big(\tfrac12(U_{k_r}+1)\big)\)\\
		&\stackrel{\phantom{d}}{=}\sum_{\substack{k_1, \ldots,  k_n\in\N\\k_i\neq k_j\text{ for }i\neq j}}f \(2k_1+1+U_{k_1},  \ldots ,  2k_r+1+U_{k_r}\),  
	\end{align*}
	where
	$$f(x)=\frac1{r!}\mathbf1_{[0,  2n]^r}(x)h\big(a\(\tfrac12(x_1-2\lfloor \tfrac{x_1}2\rfloor)\),  \ldots ,  a\(\tfrac12(x_r-2\lfloor \tfrac{x_r}2\rfloor)\)\big).$$
	Then,   \eqref{eq:barf} and \eqref{eq:sum=sumI} give as 
	\begin{align*}
		X&\stackrel{d}{=}\sum_{j=0}^rI_j(f_j)=\sum_{j=0}^rI_j(\hat f_j),  
	\end{align*}
	where 
	\begin{align*}
		&f_j(x_1,  \ldots ,  x_j)\\
		&=\frac{1}{r! 2^{r-j}}
		{{r}\choose{j}} \int_{(0,  \infty)^{r-j}} f(x_1,  \ldots ,  x_j,  y_1,  \ldots ,  y_{r-j})dy_1\ldots dy_{r-j}\\
		&=\frac{1}{r! }
		{{r}\choose{j}} \sum_{\substack{1\le i_1, ..., i_{r-j}\le n\\|\{i_1, ..., i_{r-j}, \lceil x_1/2\rceil, ..., \lceil x_j/2\rceil \}|=r\\}}\int_{2i_1-2}^{2i_1} \ldots \int_{2i_{r-j}-2}^{ 2i_{r-j}} f(x_1,  \ldots ,  x_j,  y_1,  \ldots ,  y_{r-j})\frac{dy_1\ldots dy_{r-j}}{2^{r-j}}\\
		&=\frac{1}{r!}
		{{r}\choose{j}} {(n-j)_{r-j}}\\
		&\ \ \ \times\int_{(0,  2)^{r-j}}h\(a\(\tfrac12(x_1-2\lfloor \tfrac{x_1}2\rfloor)\),  \ldots ,  a\(\tfrac12(x_j-2\lfloor \tfrac{x_j}2\rfloor,  a(z_1),  \ldots ,  a(z_{r-j}))\)\)\frac{dz_1}2\ldots \frac{dz_{r-j}}2\\
		&=\frac{(n-j)_{r-j}}{j!(r-j)!}
		\int_{\(\{0,  1)\}^m\)^{r-j}} h\(a\(\tfrac12(x_1-2\lfloor \tfrac{x_1}2\rfloor)\),  \ldots ,  a\(\tfrac12(x_j-2\lfloor \tfrac{x_j}2\rfloor,  w_1,  \ldots ,  w_{r-j})\)\)\\[-5pt]
		&\hspace{110mm}d\mu_{m,  p}(w_1)\ldots d\mu_{m,  p}(w_{r-j})\\
		&=\frac{(n-j)_{r-j}}{j!(r-j)!} h_j\(a\(\tfrac12(x_1-2\lfloor \tfrac{x_1}2\rfloor)\),  \ldots ,  a\(\tfrac12(x_j-2\lfloor \tfrac{x_j}2\rfloor\)\),  
	\end{align*}
	where $h_j$'s are introduced in \eqref{eq:defh_j}. In particular,  due to the symmetry of $h$,   every $\hat f_j$ might be expressed by $\db h_j$ in the following manner
	\begin{align}\label{eq:f=h}
		\hat f_j(x_1,  \ldots ,  x_j)=\frac{(n-j)_{r-j}}{j!(r-j)!} \db h_j\(a\(\tfrac12(x_1-2\lfloor \tfrac{x_1}2\rfloor)\),  \ldots ,  a\(\tfrac12(x_j-2\lfloor \tfrac{x_j}2\rfloor\)\).
	\end{align}
	Furthermore,   since $I_0(f_0)=\E[X]$,   we arrive at
	$$\widetilde X=\frac{X-\E[X]}{\sqrt{\Var[X]}}=\sum_{j=1}^rI_j\(\frac{\hat f_j}{\sqrt{\Var[X]}}\).$$
	Thus,   by  \eqref{dWK<sum} we get
	\begin{align*}
		&d_{K/W}(\widetilde X ,   \mathcal{N}) \\
		&\le   
		\frac{C_r}{\Var[X]} \sqrt{
			\sum_{0\le i< j\le n}\left\| \hat f_{j}\star_{j}^i  \hat f_{j}\right\|^2_{
				L^2(\R_+^{j-i})}+\sum_{1\le i< j\le n}
			\(\left\|  \hat f_{j}\star_{i}^i  \hat f_{j}\right\|^2_{
				L^2(\R_+^{2(j-i)})}+\left\|  \hat f_{j}\star_{i}^i  \hat f_{i}\right\|^2_{
				L^2(\R_+^{j-i})}\)}.
	\end{align*}
	The last step is to   estimate the $L^2$ norms above by means of the functions $\db h_i$. Namely,   by \eqref{eq:f=h}  and \eqref{eq:a=A} we have 
	\begin{align*}
		&\left\| \hat f_{j}\star_{j}^i \hat f_{j}\right\|^2_{L^2(\R_+^{j-i})}\\
		&=\int_{(0,  \infty)^{j-i}}\(\int_{(0,  \infty)^{i}}\hat f^{\, 2}_j(x_1,  \ldots ,  x_{j})\frac{dx_1\ldots dx_{i}}{2^i}\)^2dx_{i+1}\ldots dx_j\\
		&\le  n^{j-i}\int_{(0,  2)^{j-i}}\(n^i\int_{(0,  2)^i}\Big(n^{r-j}\db h_j\(a\(\tfrac12(x_1)\),  \ldots ,  a\(\tfrac12(x_j\)\)\Big)^2\frac{dx_1\ldots dx_{i}}{2^i}\)^2dx_{i+1}\ldots dx_j\\[3pt]
		&= 2^{j-i}n^{4r-3j+i}\|\db h_j\ast_j^i\db h_j\|^2_2,  
	\end{align*}
	as well as 
	\begin{align*}
		&\left\| \hat f_{j}\star_{i}^i \hat f_{j}\right\|^2_{L^2(\R_+^{2(j-i)})}\\
		&=\int_{(0,  \infty)^{j-i}}\int_{(0,  \infty)^{j-i}}\(\int_{(0,  \infty)^{i}}\hat f_j(x_1,  \ldots ,  x_{j})\hat f_j(x_1,  \ldots ,  x_{i},  y_1,  \ldots ,  y_{j-i})\frac{dx_1\ldots dx_{i}}{2^i}\)^2\\
		&\hspace{300pt}dx_{i+1}\ldots dx_jdy_1\ldots dy_{j-i}\\
		&\le   n^{2(j-i)}\int_{(0,  2)^{j-i}}\int_{(0,  2)^{j-i}}\bigg(n^i\int_{(0,  2)^i}\Big(n^{r-j}\db h_j\(a\(\tfrac12(x_1)\),  \ldots ,  a\(\tfrac12(x_j\)\)\Big)\\
		&\hspace{50pt}\times\Big(n^{r-j}\db h_j\(a\(\tfrac12(x_1)\),  \ldots ,  a\(\tfrac12(x_i\),  a\(\tfrac12(y_1)\),  \ldots ,  a\(\tfrac12(y_{j-i}\)\)\Big)\frac{dx_1\ldots dx_{i}}{2^i}\bigg)^2\\
		&\hspace{300pt}dx_{i+1}\ldots dx_jdy_1\ldots dy_{j-i}\\
		&=2^{2(j-i)}n^{4r-2j}\|\db h_j\ast_i^i\db h_j\|^2_2,  
	\end{align*}
	and eventually
	\begin{align*}
		&\left\| \hat f_{j}\star_{i}^i\hat  f_{i}\right\|^2_{L^2(\R_+^{j-i})}\\
		&=2^{-2i}\int_{(0,  \infty)^{j-i}}\(\int_{(0,  \infty)^{i}}\hat f_j(x_1,  \ldots ,  x_{j})\hat f_i(x_1,  \ldots ,  x_i)\frac{dx_1\ldots dx_{i}}{2^i}\)^2dx_{i+1}\ldots dx_j\\
		&\le  n^{j-i}\int_{(0,  2)^{j-i}}\bigg(n^i\int_{(0,  2)^i}\Big(n^{r-j}\db h_j\(a\(\tfrac12(x_1)\),  \ldots ,  a\(\tfrac12(x_j\)\)\Big)\\
		&\hspace{120pt}\times\Big(n^{r-i}\db h_i\(a\(\tfrac12(x_1)\),  \ldots ,  a\(\tfrac12(x_i\)\)\Big)\frac{dx_1\ldots dx_{i}}{2^i}\bigg)^2dx_{i+1}\ldots dx_j\\
		&= 2^{j-i}n^{4r-j-i}\|\db h_j\ast_i^i\db h_i\|^2_2,  
	\end{align*}
	which completes the proof.
\end{proof}

Next,   we will show how the norms of the form $\|\db h_i\ast_k^l\db h_j\|_2$ appearing in  Theorem \ref{thm:generaldb} might be bounded by means of their counterparts without centralization.  Additionally,   the last inequality below indicates how the bound in Theorem \ref{thm:prob->discrete} would change if  one required them to be expressed by means of functions $h_j$ instead of their centralised versions  $\overline h_j$.
\begin{lemma}\label{lem:db<} For symmetric $h\in L^4\big((\{0,  1\}^m)^r,  \mu_{m,  p}^{\otimes r}\big)$ and $0\le i< j\le r$,  it holds that 
	\begin{align*}
		\|\db h_j\ast_j^i\db h_j\|_2&\lesssim \| h_j\ast_j^i h_j\|_2,  \\
		\|\db h_j\ast_i^i\db h_j\|_2&\lesssim \| h_j\ast_i^i h_j\|_2,  \\
		\|\db h_j\ast_i^i\db h_i\|_2
		&\lesssim  \|h_{j-i}\|_2\left|\int_{(\{0,  1\}^m)^r}h(x)d\mu_{m,  p}^{\otimes r}(x)\right|+\sum_{s=1}^i\|h_{j-i+s}\ast_s^s h_s\|_2
		.
	\end{align*}
\end{lemma}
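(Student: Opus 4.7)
I would build the proof on the inclusion--exclusion identity
\[
\db h_j \;=\; \prod_{k=1}^{j}\Phi_{x_k}\,h_j \;=\; \sum_{S\subseteq[j]}(-1)^{|S|}P_S h_j,
\]
where $P_k=I-\Phi_{x_k}$ is the conditional expectation against $\mu_{m,p}$ in the $k$-th coordinate and $P_S=\prod_{k\in S}P_k$. By the symmetry of $h$, the partial average $P_S h_j$ depends only on the coordinates in $[j]\setminus S$ and, after relabelling, coincides with $h_{j-|S|}$. Substituting this expansion into each of the three contractions and carrying out the $w$-integration explicitly---by partitioning the integration index set $[i]=A\sqcup B\sqcup C\sqcup D$ according to whether each index lies in $S\cap T$, $T\setminus S$, $S\setminus T$, or $(S\cup T)^c$---decomposes every contraction into a finite multi-indexed sum of contractions $(h_\alpha\ast_s^s h_\beta)$ of partial integrals, with combinatorial weights bounded in terms of $r$. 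Minkowski's inequality then reduces the task to bounding each resulting $\|h_\alpha\ast_s^s h_\beta\|_2$ by the claimed right-hand side.

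The key reductions are two Jensen/Cauchy--Schwarz estimates. First, since $h_{k-1}=P_kh_k$ is a conditional expectation of $h_k$, the identity $(h_{k-1}\ast_s^s h_s)(y)=\int(h_k\ast_s^s h_s)(y,z)\,d\mu_{m,p}(z)$ and Cauchy--Schwarz give $\|h_\alpha\ast_s^s h_\beta\|_2\le\|h_j\ast_s^s h_j\|_2$, which collapses the surviving terms onto standard contractions of $h_j$. Second, a comparison $\|h_j\ast_s^s h_j\|_2\lesssim\|h_j\ast_j^i h_j\|_2$ (respectively $\lesssim\|h_j\ast_i^i h_j\|_2$) for $0\le s\le i$, with an $r$-dependent constant, follows from Cauchy--Schwarz together with the $L^1\subset L^2$ embedding for the probability measure $\mu_{m,p}^{\otimes k}$; this yields the first two bounds. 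The third bound is cleaner, as the post-integration terms take the explicit form $(h_{j-i+s-s_2}\ast_s^s h_s)(y_{[j-i]\setminus S_2})$ with $s\in\{0,\ldots,i\}$ and $s_2\in\{0,\ldots,j-i\}$. For $s\ge 1$ the first Jensen estimate gives $\|h_{j-i+s-s_2}\ast_s^s h_s\|_2\le\|h_{j-i+s}\ast_s^s h_s\|_2$, reproducing the generic summand; for $s=0$ the contraction degenerates into pointwise multiplication by the constant $h_0=\int h\,d\mu_{m,p}^{\otimes r}$, yielding $|h_0|\,\|h_{j-i-s_2}\|_2\le|h_0|\,\|h_{j-i}\|_2$, which accounts for the first term on the right-hand side. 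Summation against the bounded combinatorial weights concludes each case.

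\textbf{Main obstacle.} The principal difficulty is the bookkeeping of the multi-indexed inclusion--exclusion expansion: one must verify that every surviving term is captured by a summand of the stated right-hand side, and that the Jensen-type reductions do not introduce contributions of incompatible type. For the first two bounds the subtle technical point is the comparison $\|h_j\ast_s^s h_j\|_2\lesssim\|h_j\ast_j^i h_j\|_2$ (resp.\ $\lesssim\|h_j\ast_i^i h_j\|_2$), which is not a direct consequence of Jensen and exploits that $\mu_{m,p}$ is a probability measure via an $L^1$--$L^2$ Cauchy--Schwarz step. For the third bound the delicate point is merely the clean separation of the $s=0$ contributions (producing $|h_0|\,\|h_{j-i}\|_2$) from the $s\ge 1$ contributions (producing the sum over $s$); that part is largely routine once the expansion is set up correctly.
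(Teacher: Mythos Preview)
Your plan for the first and third inequalities is close to the paper's own argument, which also rests on the inclusion--exclusion expansion of $\db h_j$ together with Jensen/Cauchy--Schwarz reductions (though the paper first strips the centralizations on the \emph{integrated} variables via the variance bound $\int(\Phi_t f)^2\,d\mu\le\int f^2\,d\mu$ before expanding, which cuts the bookkeeping considerably). In particular, your comparison $\|h_j\ast_s^s h_j\|_2\le\|h_j\|_2^2\le\|h_j\ast_j^i h_j\|_2$ is correct and is essentially what the paper uses for the first bound.

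The genuine gap is in the second inequality. Your two-step route---first bound each term by $\|h_j\ast_s^s h_j\|_2$ via Jensen, then claim $\|h_j\ast_s^s h_j\|_2\lesssim\|h_j\ast_i^i h_j\|_2$ for $0\le s\le i$---fails at the second step. That comparison is simply false with an $r$-dependent constant. For $s=0$ it reads $\|h_j\|_2^2\lesssim\|h_j\ast_i^i h_j\|_2$; but writing $F(y,z)=\int_w h_j(w,y)h_j(w,z)\,d\mu^{\otimes i}(w)=\sum_k\lambda_k\phi_k(y)\phi_k(z)$ with $\lambda_k\ge0$, one has $\|h_j\|_2^2=\sum_k\lambda_k$ while $\|h_j\ast_i^i h_j\|_2=(\sum_k\lambda_k^2)^{1/2}$, and the ratio can be arbitrarily large. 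Concretely, take $j=2$, $i=1$, $p=1/2$ and $h_2(x,y)=\mathbf 1_{\{x=y\}}$ on $(\{0,1\}^m)^2$: then $\|h_2\|_2^2=2^{-m}$ but $\|h_2\ast_1^1 h_2\|_2=2^{-3m/2}$, so the ratio is $2^{m/2}$. Your ``$L^1$--$L^2$ Cauchy--Schwarz step'' cannot rescue this, because the inequality goes the wrong way.

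The paper proves the second bound by an entirely different device: it never expands $\db h_j$, but exploits the identity $\|f\ast_i^i f\|_2^2=\|f\ast_{j-i}^{j-i}f\|_2^2$ (a Fubini swap of the squared norm). Starting from $\|\db h_j\ast_i^i\db h_j\|_2^2$, one first pulls the $\Phi_{y},\Phi_{z}$ operators outside the inner integral and removes them via the variance inequality; then one swaps, pulls out and removes the $\Phi_x,\Phi_{x'}$ operators in the same way, and swaps back. This yields $\|\db h_j\ast_i^i\db h_j\|_2\le\|h_j\ast_i^i h_j\|_2$ with constant $1$, with no combinatorics at all. If you want to salvage the expansion route, you would have to bound each term $\|h_\alpha\ast_a^a h_\beta\|_2$ (where $\alpha-a,\beta-a\le j-i$) directly by $\|h_j\ast_i^i h_j\|_2$, \emph{without} first enlarging $\alpha,\beta$ to $j$; the intermediate passage through $\|h_j\ast_a^a h_j\|_2$ is exactly what loses control.
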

\begin{proof}
	By the standard bound for variance 
	\begin{align}
		\label{eq:Holder}
		\int_{\{0,  1\}^m}\big(\Phi_{t}f(t)\big)^{2} d\mu_{m,  p}(t)\le \int_{\{0,  1\}^m}\big(f(t)\big)^{2} d\mu_{m,  p}(t),  \ \ \ \ f\in L^2\(\{0,  1\}^m,   \mu_{m,  p}\),  
	\end{align}
	we get
	\begin{align}\nonumber
		&\|\db h_j\ast_j^i\db h_j\|^2_2\\\nonumber
		&=\int_{(\{0,  1\}^m)^{j-i}}\(\int_{(\{0,  1\}^m)^i}\(\Phi_{x_1}\ldots \Phi_{x_j}h_j(x)\)^2d\mu_{m,  p}^{\otimes i}(x_1,  \ldots,   x_i)\)^2d\mu_{m,  p}^{\otimes (j-i)}(x_{i+1},  \ldots,   x_j)\\\label{aux1}
		&\le \int_{(\{0,  1\}^m)^{j-i}}\(\int_{(\{0,  1\}^m)^i}\(\Phi_{x_{i+1}}\ldots \Phi_{x_j}h_j(x)\)^2d\mu_{m,  p}^{\otimes i}(x_1,  \ldots,   x_i)\)^2\\
		\nonumber 
		&\hspace{250pt}d\mu_{m,  p}^{\otimes (j-i)}(x_{i+1},  \ldots,   x_j).
	\end{align}
Furthermore,   we observe that symmetry of $h$ gives us
	\begin{align}\label{aux2}
		\int_{\{0,  1\}^m}h_j(x_1,  \ldots,  x_j)d\mu_{m,  p}(x_j)=h_{j-1}(x_1,  \ldots,   x_{j-1}),  
	\end{align} 
	and hence 
	\begin{align*}
		\Phi_{x_{i+1}}\ldots \Phi_{x_j}h_j(x)&=\sum_{s=0}^{j-i}(-1)^{j-i-s}\sum_{i+1\le k_1<\ldots<k_{s}\le j}h_{i+s}(x_1,  \ldots,   x_i,   x_{k_1},  \ldots,   x_{k_{s}}).
	\end{align*}
	Since also,   by Cauchy-Schwarz inequality,  
	\begin{align*}
		\left|h_{i+s}(x_{1},  \ldots,   x_{i+s})\right|&=\left|\int_{(\{0,  1\}^m)^{s}}h_j(x_1,  \ldots,   x_j)d\mu_{m,  p}^{\otimes s}(x_{i+s+1},  \ldots,   x_j)\right|\\
		&\le \(\int_{(\{0,  1\}^m)^{s}}\big(h_j(x_1,  \ldots,   x_j)\big)^2d\mu_{m,  p}^{\otimes s}(x_{i+s+1},  \ldots,   x_j)\)^{1/2}\\
		&=\Big((h_j\ast_j^{j-i-s}h_j)(x_{1},  \ldots,   x_{i+s})\Big)^{1/2},  
	\end{align*}
	and 
	$$\| h_j\ast_j^{j-s} h_j\|^2_2\le\| h_j\ast_j^{i} h_j\|^2_2, \ \ \ 0\le i\le j-s, $$
	we get from \eqref{aux1}
	\begin{align*}
		&\|\db h_j\ast_j^i\db h_j\|^2_2\\
		&\lesssim \sum_{s=0}^{j-i} \int_{(\{0,  1\}^m)^{j-i}}\(\int_{(\{0,  1\}^m)^i}\(h_{i+s}(x_1,  \ldots,   x_{i+s})\)^2d\mu_{m,  p}^{\otimes i}(x_1,  \ldots,   x_i)\)^2d\mu_{m,  p}^{\otimes (j-i)}(x_{i+1},  \ldots,   x_j)\\
		&\lesssim \sum_{s=0}^{j-i} \int_{(\{0,  1\}^m)^{j-i}}\(\int_{(\{0,  1\}^m)^i}(h_j\ast_j^{j-i-s}h_j)(x_{1},  \ldots,   x_{i+s})d\mu_{m,  p}^{\otimes i}(x_1,  \ldots,   x_i)\)^2d\mu_{m,  p}^{\otimes (j-i)}(x_{i+1},  \ldots,   x_j)\\
		&=\sum_{s=0}^{j-i}\| h_j\ast_j^{j-s} h_j\|^2_2\le \sum_{s=0}^{j-i}\| h_j\ast_j^{i} h_j\|^2_2=(j-i)\| h_j\ast_j^{i} h_j\|^2_2, 
	\end{align*}
	which proves the first inequality of the lemma.
	
	We now turn our attention to the middle inequality from the assertion.  Using \eqref{eq:Holder} we bound
	\begin{align*}\nonumber
		&\|\db h_j\ast_i^i\db h_j\|^2_{2}\\\nonumber
		&=\int_{(\{0,  1\}^m)^{j-i}}\int_{(\{0,  1\}^m)^{j-i}}
		\bigg(\int_{(\{0,  1\}^m)^i}\(\Phi_{x_1}\ldots \Phi_{x_i}\Phi_{y_1}\ldots \Phi_{y_{j-i}}h_j(x,  y)\)\\\nonumber
		&\hspace{100pt}\(\Phi_{x_1}\ldots \Phi_{x_i}\Phi_{z_1}\ldots \Phi_{z_{j-i}}h_j(x,  z)\)d\mu_{m,  p}^{\otimes i}(x)\bigg)^2d\mu_{m,  p}^{\otimes (j-i)}(y)d\mu_{m,  p}^{\otimes (j-i)}(z)\\\nonumber
		&=\int_{(\{0,  1\}^m)^{j-i}}\int_{(\{0,  1\}^m)^{j-i}}
		\bigg(\Phi_{y_1}\ldots \Phi_{y_{j-i}}\Phi_{z_1}\ldots \Phi_{z_{j-i}}\int_{(\{0,  1\}^m)^i}\(\Phi_{x_1}\ldots \Phi_{x_i}h_j(x,  y)\)\\\nonumber
		&\hspace{160pt}\(\Phi_{x_1}\ldots \Phi_{x_i}h_j(x,  z)\)d\mu_{m,  p}^{\otimes i}(x)\bigg)^2d\mu_{m,  p}^{\otimes (j-i)}(y)d\mu_{m,  p}^{\otimes (j-i)}(z)\\\nonumber
		&\le\int_{(\{0,  1\}^m)^{j-i}}\int_{(\{0,  1\}^m)^{j-i}}
		\bigg(\int_{(\{0,  1\}^m)^i}\(\Phi_{x_1}\ldots \Phi_{x_i}h_j(x,  y)\)\(\Phi_{x_1}\ldots \Phi_{x_i}h_j(x,  z)\)d\mu_{m,  p}^{\otimes i}(x)\bigg)^2\\
		&\hspace{310pt}d\mu_{m,  p}^{\otimes (j-i)}(y)d\mu_{m,  p}^{\otimes (j-i)}(z).
	\end{align*}
	Next,  we represent the square of the integral as a multiplication of two integrals,   apply the Fubini theorem and take advantage of \eqref{eq:Holder} once again. Namely,   we have
	\begin{align*}
		&\|\db h_j\ast_i^i\db h_j\|^2_{{2}}\\\nonumber
		&\le\int_{(\{0,  1\}^m)^{j-i}}\int_{(\{0,  1\}^m)^{j-i}}
		\bigg(\int_{(\{0,  1\}^m)^i}\(\Phi_{x_1}\ldots \Phi_{x_i}h_j(x,  y)\)\(\Phi_{x_1}\ldots \Phi_{x_i}h_j(x,  z)\)d\mu_{m,  p}^{\otimes i}(x)\\
		&\ \ \ \ \times \int_{(\{0,  1\}^m)^i}\(\Phi_{x'_1}\ldots \Phi_{x'_i}h_j(x',  y)\)\(\Phi_{x'_1}\ldots \Phi_{x'_i}h_j(x',  z)\)d\mu_{m,  p}^{\otimes i}(x')\bigg)d\mu_{m,  p}^{\otimes (j-i)}(y)d\mu_{m,  p}^{\otimes (j-i)}(z)\\
		&=\int_{(\{0,  1\}^m)^{i}}\int_{(\{0,  1\}^m)^{i}}
		\bigg(\int_{(\{0,  1\}^m)^{j-i}}\(\Phi_{x_1}\ldots \Phi_{x_i}h_j(x,  y)\)\(\Phi_{x'_1}\ldots \Phi_{x'_i}h_j(x',  y)\)d\mu_{m,  p}^{\otimes j-i}(y)\bigg)^2\\
		&\hspace{360pt}d\mu_{m,  p}^{\otimes i}(x)d\mu_{m,  p}^{\otimes i}(x')\\
		&=\int_{(\{0,  1\}^m)^{i}}\int_{(\{0,  1\}^m)^{i}}
		\bigg(\Phi_{x_1}\ldots \Phi_{x_i}\Phi_{x'_1}\ldots \Phi_{x'_i}\int_{(\{0,  1\}^m)^{j-i}}h_j(x,  y)h_j(x',  y)d\mu_{m,  p}^{\otimes j-i}(y)\bigg)^2\\
		&\hspace{360pt}d\mu_{m,  p}^{\otimes i}(x)d\mu_{m,  p}^{\otimes i}(x')\\
		&{\le}\int_{(\{0,  1\}^m)^{i}}\int_{(\{0,  1\}^m)^{i}}
		\bigg(\int_{(\{0,  1\}^m)^{j-i}}h_j(x,  y)h_j(x',  y)d\mu_{m,  p}^{\otimes j-i}(y)\bigg)^2d\mu_{m,  p}^{\otimes i}(x)d\mu_{m,  p}^{\otimes i}(x')\\[2pt]
		&=\| h_j\ast_i^i h_j\|^2_{2}.
	\end{align*}
	
	Finally,   we pass to the proof of the last inequality. We start in the same manner as in the previous case.
	\begin{align}\nonumber
		&\|\db h_j\ast_i^i\db h_i\|^2_{2}\\\nonumber
		&=\int_{(\{0,  1\}^m)^{j-i}}
		\bigg(\int_{(\{0,  1\}^m)^i}\(\Phi_{x_1}\ldots \Phi_{x_i}\Phi_{y_1}\ldots \Phi_{y_{j-i}}h_j(x,  y)\)\\\nonumber
		&\hspace{150pt}\times\(\Phi_{x_1}\ldots \Phi_{x_i}h_i(x)\)d\mu_{m,  p}^{\otimes i}(x)\bigg)^2d\mu_{m,  p}^{\otimes (j-i)}(y)\\\nonumber
		&=\int_{(\{0,  1\}^m)^{j-i}}
		\bigg(\Phi_{y_1}\ldots \Phi_{y_{j-i}}\int_{(\{0,  1\}^m)^i}\(\Phi_{x_1}\ldots \Phi_{x_i}h_j(x,  y)\)\\\nonumber
		&\hspace{150pt}\times\(\Phi_{x_1}\ldots \Phi_{x_i}h_i(x)\)d\mu_{m,  p}^{\otimes i}(x)\bigg)^2d\mu_{m,  p}^{\otimes (j-i)}(y)\\
		&\le\int_{(\{0,  1\}^m)^{j-i}}
		\bigg(\int_{(\{0,  1\}^m)^i}\(\Phi_{x_1}\ldots \Phi_{x_i}h_j(x,  y)\)\(\Phi_{x_1}\ldots \Phi_{x_i}h_i(x)\)d\mu_{m,  p}^{\otimes i}(x)\bigg)^2d\mu_{m,  p}^{\otimes (j-i)}(y).\hspace{26pt}\label{aux3}
	\end{align}
	Due to the covariance identity
	\begin{align*}
		&\int_{\{0,  1\}^m}\big(\Phi_{t}f(t)\big) \big(\Phi_{t}g(t)\big)d\mu_{m,  p}(t)\\
		&= \int_{\{0,  1\}^m}f(t)g(t)d\mu_{m,  p}(t)-\int_{\{0,  1\}^m}f(t)d\mu_{m,  p}(t)\int_{\{0,  1\}^m}g(t)d\mu_{m,  p}(t),  
	\end{align*}
	where $f,  g\in L^2(\(\{0,  1\}^m,   \mu_{m,  p}\)$,   and the formula \eqref{aux2},   we may rewrite the inner integral in \eqref{aux3} as follows
	\begin{align*}
		&\int_{(\{0,  1\}^m)^i}\(\Phi_{x_1}\ldots \Phi_{x_i}h_j(x,  y)\)\(\Phi_{x_1}\ldots \Phi_{x_i}h_i(x)\)d\mu_{m,  p}^{\otimes i}(x)\\
		&=\sum_{s=0}^i\sum_{1\le k_1<\ldots<k_s\le i}(-1)^{i-s}\int_{(\{0,  1\}^m)^{{s}}}h_{j-i+s}(x_{k_1},  \ldots,  x_{k_s},  y)h_{s}(x_{k_1},  \ldots,  x_{k_s})d\mu_{m,  p}^{\otimes i}(x_{k_1},  \ldots,  x_{k_s}).
	\end{align*}
	Applying this to \eqref{aux3},   we arrive at
	\begin{align*}\nonumber
		\|\db h_j\ast_i^i\db h_i\|^2_{2}&\lesssim \sum_{s=0}^i\int_{(\{0,  1\}^m)^{j-i}}
		\bigg(\int_{(\{0,  1\}^m)^s}h_{j-i+s}(x,  y)h_s(x)d\mu_{m,  p}^{\otimes s}(x)\bigg)^2d\mu_{m,  p}^{\otimes (j-i)}(y)\\
		&= \sum_{s=1}^i\|h_{j-i+s}\ast_s^s h_s\|^2_{2}+\|h_{j-i}\|^2_{2}\(\int_{(\{0,  1\}^m)^{r}}h(x)d\mu_{m,  p}^{\otimes r}(x)\)^2,  
	\end{align*}
	which ends the proof.
\end{proof}

We deduce the assertion of Theorem \ref{thm:prob->discrete} from Theorem \ref{thm:generaldb} and Lemma \ref{lem:db<} as follows. 

\begin{proof}[Proof of Theorem~\ref{thm:prob->discrete}] Since $\db{\(\overline h_j\)}=\db h_j $,   Lemma \ref{lem:db<} gives us
	\begin{align*}
		\|\db h_j\ast_j^i\db h_j\|_2&\lesssim \| \overline h_j\ast_j^i  \overline h_j\|_2,  \\
		\|\db h_j\ast_i^i\db h_j\|_2&\lesssim \|  \overline h_j\ast_i^i  \overline h_k\|_2,  
	\end{align*}
	as well as
	\begin{align*}
		{\sum_{1\le i<j\le r}}	\frac{\|\db h_j\ast_i^i\db h_i\|^2_2}{n^{j+i}}&\lesssim {\sum_{1\le i<j\le r}}	\frac{1}{n^{j+i}} \sum_{s=1}^i\| \overline h_{j-i+s}\ast_s^s  \overline h_s\|^2_2\\
		&\le {\sum_{1\le i<j\le r}}	 \sum_{s=1}^i\frac{1}{n^{j-i+2s}}\| \overline h_{j-i+s}\ast_s^s  \overline h_s\|^2_2\\
		&= {\sum_{1\le i<j\le r}}\frac{r-j+i}{n^{j+i}}\|  \overline h_j\ast_i^i  \overline h_i\|^2_2\\
		&\leq r {\sum_{1\le i<j\le r}}\frac{\|  \overline h_j\ast_i^i  \overline h_i\|^2_2}{n^{j+i}},  
	\end{align*}
	where we used $\int_{(\{0,  1\}^m)^r}\overline h(x)d\mu_{m,  p}^{\otimes r}(x)=0$. Applying this to Theorem \ref{thm:generaldb},   we conclude the assertion of Theorem \ref{thm:prob->discrete}.
\end{proof}

\section{Clique covers and subgraph probabilities}\label{sec:CCs}
The notion of clique covers was introduced in \cite{1999KarSchSin}. In our article,  the main idea remains the same, however,  some minor details have been adjusted to better suit specific needs. Furthermore,  to reduce the number of cases considered,  we establish probabilities of certain groups of clique covers rather than probabilities of  specific clique covers.

Let $H$ be a given graph and  $\calP(H)$ be the family of subsets of $V(H)$ that contain both ends of at least one edge of $H$. 
A family $\cover\subseteq \calP(H)$ is called a {\sl clique cover} of $H$ if, for every $e\in E(H)$, there exists a set $C\in\cover$ such that $e\subseteq C$.  Furthermore,  by $\covers=\covers(H)$ we denote the set of all clique covers of $H$. Additionally, we use the term 
$t$-set to refer to any set containing 
$t$ elements; this term will be mainly used in the context of vertex subsets.

For $H$  such that $V(H)\subseteq \V$ we say that $a\in \A$ {\sl builds} $C\subseteq V(H)$ if  all vertices from $C$ chose attribute $a$ and no vertex from $V(H)\setminus C$ chose $a$ in $\Gnmp$. The probability of such an event is given by
\begin{align}\label{eq:p_C}
	p_C:=p^{|C|}(1-p)^{|V(H)|-|C|}.
\end{align}
We say that $H$ is {\sl given by a clique cover} $\cover$ in $\Gnmp$ if for every $C\in \cover$  there is an attribute in $\A$ that builds $C$ and no attribute from $\A$ builds any $C\in \calP(H)\setminus \cover$. 

Let $\pi(H,  \cover)$ be the probability that $H$ is given by a clique cover $\cover$ in $\Gnmp$. For a subfamily of clique covers $\covers\subseteq \covers(H)$ we denote the probability that $H$ is given by a clique cover from   $\covers$ by
\begin{equation}\label{Eq:SubgraphProbabilityCCfamily}
	\pi(H,  \covers):=\sum_{\cover\in \covers} \pi(H,  \cover). 
\end{equation}
In particular,  the probability that $H$ is a subgraph of $\Gnmp$ is given by
\begin{align}\label{Eq:SubgraphProb}
	\pi(H):=\Pra{H\subseteq \Gnmp}=\pi(H,  \covers(H))
	=\sum_{\cover\in\covers(H)}\pi(H,  \cover). 
\end{align}
Next,  we define  families of clique covers that will be used to compare clique covers of different graphs. For a clique cover
$\cover_+\subseteq \calP (H)$ and a set
$\cover_-\subseteq \calP(H)\setminus \cover_+$
we denote by  $\covers(H,  \cover_+,  \cover_-)$ the family of  all the clique covers $\cover$ containing all sets from  $\cover_+$ and no set from $\cover_-$,   i.e.  such that $\cover_+\subseteq \cover\subseteq\calP(H)\setminus \cover_-$. The sets from $\cover_-$ will be called {\sl forbidden}. We will be using the shorthand notation
\begin{equation}\label{Eq:SubgraphProbabilityCCfamily2}
	\pi(H,  \cover_+,  \cover_-):=\pi(H,  \covers(H,  \cover_+,  \cover_-))=\sum_{\cover\in \covers(H,  \cover_+,  \cover_-)} \pi(H,  \cover). 
\end{equation}
As a special case, we have $\pi(H,  \cover)=\pi(H,  \cover,  \calP(H)\setminus\cover)$.

In Lemma \ref{Lem:CCprobability} we present some general results on asymptotics and estimates of the probabilities  $\pi(H,  \cover_+,  \cover_-)$. What is important is that for $mp^3\leq1$ their behaviour is driven by the cardinality of $\cover_+, \cover_-$ and their particular subsets.  A version of this lemma was proved in \cite{1999KarSchSin}, and was  generalised  in \cite{2017RybSta} and \cite{Dong2023}. However, we need its further extensions, that cover the cases when $mp^3\le 1$ and $m=m(n)\to\infty$ arbitrarily slowly. We therefore  employ an entirely new method of the proof that allows us to substantially weaken assumptions.

\begin{lemma}\label{Lem:CCprobability}
	Let $H$ be a graph and   $\calP(H)$  be the family of subsets of $V(H)$ containing both ends of at least one edge of $H$. Fix a clique cover  $\cover_+$   and a set $\cover_-\subseteq \calP(H)\setminus \cover_+$.
	If  $m\to \infty$ and $p$ is bounded away from $1$ by a constant,  then 
	\begin{align}\label{eq:CC1}
	\pi(H, \cover_+,  \cover_-)=\left(1+O(m^{-\alpha})\right)  \left(1-\sum_{C\in \cover_-}p_C\right)^{m}\prod_{C\in\cover_+}(1-e^{-mp_C}),
\end{align}
	where $p_C$ is defined in \eqref{eq:p_C},and $\alpha=1/({|V(H)|2^{|V(H)|+1}})$. In particular,  for $mp^2=o(1)$
	$$
	\pi(H, \cover_+,  \cover_-)\sim \pi(H, \cover_+)\sim \prod_{C\in\cover_+}mp^{|C|}=m^{|\cover_+|}p^{\sum_{C\in \cover_+} |C|}.
	$$
	Furthermore, if $mp^3\le 1$ and $m>m_0$ for some $m_0\in\N$, the following estimates hold true 
	\begin{equation}\label{eq:CCasymp}
		\pi(H, \cover_+,  \cover_-)\asymp e^{-|\cover_-^{(2)}|mp^2}(1-e^{-mp^2})^{|\cover_+^{(2)}|}\prod_{C\in\cover_+\setminus \cover_+^{(2)}}mp^{|C|},  
	\end{equation}
	where $\cover_+^{(2)}$ and $\cover_-^{(2)}$ are the families of 2--sets in $\cover_+$ and $\cover_-$,  respectively.
\end{lemma}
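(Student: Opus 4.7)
The plan is to use inclusion-exclusion to express $\pi(H,\cover_+,\cover_-)$ as an alternating sum of binomial-type probabilities, then approximate each summand by an exponential and exploit the cancellation structure of the alternating sum to obtain the required uniform $O(m^{-\alpha})$ bound; the two specialisations will then follow by straightforward estimation.

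Because the $m$ attributes are mutually independent and each attribute $a$ builds exactly one subset $S_a\subseteq V(H)$ with $\P(S_a=C)=p_C$, the event defining $\pi(H,\cover_+,\cover_-)$ is ``no attribute builds any $C'\in\cover_-$ and each $C\in\cover_+$ is built by at least one attribute.'' Writing $q=\sum_{C'\in\cover_-}p_{C'}$ and $p_{\mathcal{D}}=\sum_{C\in\mathcal{D}}p_C$, inclusion-exclusion over $\cover_+$ (absorbing the $\cover_-$-constraint into each term) produces the exact identity
\begin{equation*}
\pi(H,\cover_+,\cover_-)=\sum_{\mathcal{D}\subseteq\cover_+}(-1)^{|\mathcal{D}|}\bigl(1-q-p_{\mathcal{D}}\bigr)^m,
\end{equation*}
whereas the target decomposes correspondingly as $(1-q)^m\prod_{C\in\cover_+}(1-e^{-mp_C})=\sum_{\mathcal{D}}(-1)^{|\mathcal{D}|}(1-q)^m e^{-mp_{\mathcal{D}}}$. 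It therefore suffices to control the termwise difference $(1-q-p_{\mathcal{D}})^m-(1-q)^m e^{-mp_{\mathcal{D}}}=(1-q)^m\bigl[(1-p_{\mathcal{D}}/(1-q))^m-e^{-mp_{\mathcal{D}}}\bigr]$ after summation against $(-1)^{|\mathcal{D}|}$.

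For each $\mathcal{D}$ the Taylor expansion of $\ln(1-u)$ gives $(1-p_{\mathcal{D}}/(1-q))^m=e^{-mp_{\mathcal{D}}}\exp\bigl(-mp_{\mathcal{D}}q/(1-q)-mp_{\mathcal{D}}^2/(2(1-q)^2)-\ldots\bigr)$, a power series in the $p_C$'s of total degree at least $2$. A direct termwise estimate of the correction is generally insufficient, but the combinatorial identity $\sum_{\mathcal{D}}(-1)^{|\mathcal{D}|}p_{\mathcal{D}}^k=0$ whenever the monomial $\prod_C p_C^{a_C}$ omits some $C\in\cover_+$ forces the lowest-order contributions to cancel in the alternating sum, boosting the effective degree of the residual error. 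To turn this into a uniform bound I would split on the magnitude of $p$ at the threshold $p^{*}=m^{-\alpha}$ with $\alpha=1/(|V(H)|2^{|V(H)|+1})$: for $p\leq p^{*}$ the polynomial error from the cancellation-improved expansion is $O(m^{-\alpha})$ after bookkeeping the powers $m^{|\cover_+|}p^{2|\cover_+|}$; for $p>p^{*}$ one has $mp_C\gtrsim m^{1-|V(H)|\alpha}$ for every $C\in\calP(H)$, so each $e^{-mp_C}$ and each $(1-p_C/(1-q))^m$ is super-polynomially small, and both sides reduce to their $\mathcal{D}=\emptyset$ contributions (equal to $(1-q)^m$) up to negligible error. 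Executing this balance --- in particular controlling the accumulation across the $2^{|\cover_+|}$ terms of the inclusion-exclusion --- is the principal technical obstacle and is the point where the choice of the specific $\alpha$ is dictated.

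The two particular assertions follow directly from \eqref{eq:CC1}. When $mp^2=o(1)$, every $mp_C\leq mp^2\to0$ yields $1-e^{-mp_C}\sim mp_C\sim mp^{|C|}$ (since $p\to0$ and $(1-p)^{|V(H)|-|C|}\to1$), while $(1-q)^m\to1$ because $mq\leq 2^{|V(H)|}mp^2\to0$; this gives both $\pi(H,\cover_+,\cover_-)\sim\pi(H,\cover_+)$ and the product form. For $mp^3\leq1$ (which forces $p\leq m^{-1/3}\to0$, in particular $p\leq 3^{-1/3}<1$), I partition $\cover_+$ into $\cover_+^{(2)}$ and $\cover_+\setminus\cover_+^{(2)}$: for $|C|\geq3$ one has $mp_C\leq mp^3\leq1$ so $1-e^{-mp_C}\asymp mp^{|C|}$, whereas for $|C|=2$ the factor $1-e^{-mp_C}\asymp 1-e^{-mp^2}$ because $(1-p)^{|V(H)|-2}$ stays bounded between positive constants. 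Similarly, $q=|\cover_-^{(2)}|p^2(1-p)^{|V(H)|-2}+O(|\cover_-|p^3)$, and since $mp^3\leq1$ the $|C'|\geq3$ part of $q$ perturbs the exponent of $(1-q)^m\sim e^{-mq}$ by only a bounded amount, giving $(1-q)^m\asymp e^{-|\cover_-^{(2)}|mp^2}$. Combining these estimates yields \eqref{eq:CCasymp}.
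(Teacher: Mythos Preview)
Your inclusion-exclusion identity and the decomposition of the target product are both exactly what the paper uses (its formula \eqref{Eq:PiInclusionExclusion}), and your derivations of the two corollaries from \eqref{eq:CC1} are essentially the paper's arguments. The gap is in the heart of the proof of \eqref{eq:CC1}.

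Your split at a single threshold $p^{*}=m^{-\alpha}$ does not separate the regimes cleanly. For $p\le p^{*}$ one can still have $mp_C$ arbitrarily large when $|C|=2$ (since $mp^2\le m^{1-2\alpha}\to\infty$) while $mp_{C'}$ is small for larger $C'$. In this mixed situation the termwise error $(1-p_{\mathcal D}/(1-q))^m-e^{-mp_{\mathcal D}}$ carries the factor $e^{-mp_{\mathcal D}}$, which depends on $\mathcal D$ and is \emph{not} polynomial in the $p_C$'s; hence your cancellation identity ``$\sum_{\mathcal D}(-1)^{|\mathcal D|}(\text{monomial missing some }C)=0$'' does not apply to the error after summation. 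You acknowledge that ``executing this balance\ldots is the principal technical obstacle,'' but in fact the obstacle is structural, not merely bookkeeping: the polynomial-cancellation mechanism collapses precisely in the regime where some $mp_C$ are large and others small, and your dichotomy on $p$ alone does not avoid that regime.

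The paper resolves this by partitioning $\cover_+$ (not $p$) into three pieces according to whether each individual $mp_{C}$ lies below $\varepsilon=m^{-\alpha}$, between $\varepsilon$ and $\omega=m^{1/(2(h+2))}$, or above $\omega$. For the large piece the corresponding alternating summands are uniformly $\le(1-q)^me^{-\omega}$; for the intermediate piece one approximates $(1-\sum_{i\in\Lind} p_i/(1-q))^m$ by $e^{-\sum_{i\in\Lind} mp_i}$ with a controlled error and then factorises $\sum_{\Lind}(-1)^{|\Lind|}e^{-\sum mp_i}=\prod(1-e^{-mp_i})$; for the small piece one abandons inclusion-exclusion entirely and works instead with the multinomial representation \eqref{Eq:Pi2}, bounding the contribution of those coordinates above and below by $(1\pm o(1))\prod(1-e^{-mp_i})$. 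It is this three-way split on the \emph{individual} $mp_C$'s, together with the switch from inclusion-exclusion to the multinomial sum for the small indices, that produces the uniform $O(m^{-\alpha})$ error and dictates the constant $\alpha$.
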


\begin{proof}
	Denote $|V(H)|=h$,  $|\calP(H)|=t$,  and $\calP(H)=\{C_1, C_2, \ldots, C_{t}\}$. Then, let $\Ind,  \Jind\subseteq \{1, \ldots,  t\}$,  $\Ind\cap\Jind=\emptyset$,  
	be such that  $\cover_+=\{C_i: i\in \Ind\}$ is a clique cover of $H$ and $\cover_-=\{C_i: i\in \Jind\}$ is a family of forbidden sets. Furthermore, put 
	\begin{equation}\label{Eq:pi}
		p_i=p_{C_i}=p^{|C_i|}(1-p)^{h-|C_i|}, \text{ for }i\in \{1, \ldots, t\}\quad\text{and,}\quad p_0=1-\sum_{i\in\Ind\cup\Jind}p_i.	
	\end{equation}
	Given a sequence of non negative integers $(a_i)_{i\in \Ind}$, $\Ind\subseteq \{1, \ldots, t\}$,   we set $$a_0=m-\sum_{i\in \Ind}a_i,$$
	and denote by  
	\begin{align*} 
		\{(a_i, i\in \Ind)^{\ge0}\}\quad&\text{ all sequences } (a_i)_{i\in\Ind}\text{ such that }\quad a_i\ge 0,  i\in \Ind\cup\{0\};\\
		\{(a_i, i\in \Ind)^{\ge1}\}\quad&\text{ all sequences } (a_i)_{i\in\Ind}\text{ such that }\quad a_i\ge 1,  i\in \Ind\text{ and }  a_0\ge 0.
	\end{align*}
	Furthermore, for any such a sequence we put
	\begin{align*}
		\binom{m}{a_i, i\in \Ind}=\frac{m!}{(1-\sum_{j\in \ind}a_j)!\prod_{i\in \Ind}a_i!}=\frac{m!}{\prod_{i\in \Ind\cup\{0\}}a_i!}.
	\end{align*}
	Additionaly, we  decompose
	$\Ind=\Ind_1\dot\cup\Ind_2\dot\cup\Ind_3$,  where 
	\begin{align*}
		\Ind_1&=\{i\in\Ind:mp_i\le  \varepsilon\},&	
		\Ind_2&=\{i\in\Ind:\varepsilon<mp_i<\omega\},&
		\Ind_3&=\{i\in\Ind:mp_i\ge \omega\}, 
	\end{align*}
	with
	\begin{align*}
		\varepsilon&=\varepsilon(h, m)=m^{-\frac{1}{h2^{h+1}}}, &
		\omega&=\omega(h, m)=m^{\frac{1}{2(h+2)}}.
	\end{align*}
	
	We are now prepared to determine  two formulae for $\pi(H,   \cover_+,  \cover_- )$. Let $M_0$ be the number of attributes from $\A$  that build none of the sets  $C_i$ in $\Gnmp$,   $i\in \Ind\cup \Jind$,   and let
	$M_i$,   $i\in \Ind\cup\Jind$,   be the number of attributes from $\A$  that build~$C_i$ in $\Gnmp$. Recall that by definition \eqref{Eq:pi},  for $i\neq 0$,  $p_i$ is the probability that a given attribute builds $C_i$. Moreover, $p_0$ is the probability that a given attribute does not build any set from $\cover_+\cup\cover_-$, as any attribute can only build  one set from $\calP(H)$ at a time. All this allows us to write
	\begin{align}\nonumber
		\pi(H,   \cover_+,  \cover_- )&=\sum_{\{(a_i, i\in \Ind)^{\ge1}\}}\,  \Pra{\{M_0=a_0\}\cap\bigcap_{i\in \Ind}\{M_i=a_i\}\cap \bigcap_{j\in \Jind}\{M_j=0\}}\\\label{Eq:Pi2}
		&=\sum_{\{(a_i,  i\in\Ind)^{\ge 1}\}}\binom{m}{a_i,  i\in\Ind}p_0^{a_0}\prod_{i\in \Ind}p_i^{a_i}.
	\end{align}
	Next, define events $A_i=\{M_i\ge 1\}$, for $i\in \Ind$, and $B=\{\forall_{j\in \Jind} M_j=0\}$. Moreover, denote by $A_i'=\{M_i=0\}$ the complement of $A_i$, $i\in \Ind$.  Therefore, by
	the inclusion-exclusion principle we have 	
	\begin{align}\nonumber
			\pi(H,   \cover_+,  \cover_- )
			&=\Pra{\bigcap_{i\in \Ind}A_i\cap B}\\\nonumber
			&=\Pra{B}-\Pra{\bigcup_{i\in \Ind}(A_i'\cap B)}\\\nonumber
			&=\sum_{\Lind\subseteq\Ind}(-1)^{|\Lind|}\Pra{\bigg(\bigcap_{i\in \Lind}A_i'\bigg)\cap B}\\\label{Eq:Pi1}
			&=\sum_{\Lind\subseteq\Ind}(-1)^{|\Lind|}
			\sum_{\substack{\{(a_i,  i\in\Ind)^{\ge0}\}\\   a_i=0 \text{ for }i\in \Lind}}\binom{m}{a_i,  i\in\Ind}p_0^{a_0}\prod_{i\in \Ind}p_i^{a_i}, 
	\end{align}	
	where, for notational convenience, we set $\bigcap_{i\in\emptyset}A_i'$ as the whole probability space.
	Next, by the multinomial theorem,  for any $\Lind\subseteq \Ind$ we have
	\begin{multline}\label{Eq:SumazLI}
		\sum_{\substack{\{(a_i,  i\in\Ind)^{\ge0}\}\\  a_i=0 \text{ for }i\in \Lind}}\binom{m}{a_i,  i\in\Ind}p_0^{a_0}\prod_{i\in \Ind}p_i^{a_i}
		=\\=
		\sum_{\{(a_i, {i\in\Ind\setminus\Lind})^{\ge0}\}}\binom{m}{a_i,  i\in\Ind\setminus\Lind}p_0^{a_0}\prod_{i\in \Ind\setminus\Lind}p_i^{a_i}
		=\left(p_0+\sum_{i\in\Ind\setminus\Lind}p_i\right)^m
		=\left(1-\sum_{i\in\Jind\cup\Lind}p_i\right)^m.
	\end{multline}
	We will also consider the sums over $\{(a_i,  i\in\Ind_2\cup \Ind_3)^{\ge0}\}$. For them,  analogously as above, we obtain
	\begin{equation}\label{Eq:SumazLI23}
		\sum_{\substack{\{(a_i,  i\in\Ind_2\cup \Ind_3)^{\ge0}\}\\ a_i=0 \text{ for }i\in \Lind}}\binom{m}{a_i,  i\in\Ind}p_0^{a_0}\prod_{i\in \Ind}p_i^{a_i}
		=\left(p_0+\sum_{i\in\Ind_2\cup\Ind_3\setminus\Lind}p_i\right)^m
		=\left(1-\sum_{i\in\Jind\cup\Ind_1\cup\Lind}p_i\right)^m.
	\end{equation}
	Therefore,  substituting \eqref{Eq:SumazLI} to \eqref{Eq:Pi1}  we get
	\begin{equation}\label{Eq:PiInclusionExclusion}
		\pi(H,  \cover_+,  \cover_-)
		=\sum_{\Lind\subseteq\Ind}(-1)^{|\Lind|}\left(1-\sum_{i\in\Jind\cup\Lind}p_i\right)^m.
	\end{equation}
	Similarly,  by the inclusion--exclusion principle and \eqref{Eq:SumazLI23}
	\begin{equation}\label{Eq:PiInclusionExclusion2}
		\begin{split}
			\sum_{\{(a_i,  i\in\Ind_2\cup \Ind_3)^{\ge1}\}}\binom{m}{a_i,  i\in\Ind}p_0^{a_0}\prod_{i\in \Ind}p_i^{a_i}
			&=\sum_{\Lind\subseteq\Ind_2\cup \Ind_3}(-1)^{|\Lind|}
			\sum_{\substack{\{(a_i,  i\in\Ind_2\cup \Ind_3)^{\ge0}\}\\  a_i=0 \text{ for }i\in \Lind}}\binom{m}{a_i,  i\in\Ind}p_0^{a_0}\prod_{i\in \Ind}p_i^{a_i}\\
			&=\sum_{\Lind\subseteq\Ind_2\cup\Ind_3}(-1)^{|\Lind|}\left(1-\sum_{i\in\Jind\cup\Ind_1\cup\Lind}p_i\right)^m.
		\end{split}
	\end{equation}
	Furthermore, let us observe that for $\Lind\subseteq\Ind$ such that $\Lind\cap \Ind_3\neq \emptyset$  (i.e. when $mp_j\ge \omega$ for some $j\in\Lind$) we have	
	\begin{equation}
		\label{Eq:Duzempibis}
		\begin{split}
			\left(1-\sum_{i\in\Jind\cup\Lind} p_i\right)^m
			&=\left(1-\sum_{i\in\Jind}p_i\right)^m
			\left(1-\frac{\sum_{j\in\Lind}p_j}{1-\sum_{i\in\Jind}p_i}\right)^m\\
			&\le \left(1-\sum_{i\in\Jind}p_i\right)^m\left(1-\sum_{j\in\Lind}p_j\right)^m\\
			&\le\left(1-\sum_{i\in\Jind}p_i\right)^me^{-\omega}.\\
		\end{split}
	\end{equation}
	In the last line we have used the fact that $(1-x-y)^m\le e^{-mx-my}\le e^{-mx}$ for $0\le x+y<1$,   $x, y\ge 0$. 
	\medskip
	
	Now, let us return to the analysis of \eqref{Eq:PiInclusionExclusion}. In the following, we will demonstrate that
	\begin{equation}\label{Eq:Serduszko}
		\sum_{\Lind\subseteq\Ind_2\cup \Ind_3}(-1)^{|\Lind|}\left(1-\sum_{i\in\Jind\cup\Lind}p_i\right)^m =  \left(1-\sum_{i\in\Jind}p_i\right)^m\prod_{i\in\Ind_2\cup\Ind_3}(1-e^{-mp_i})
		\left(
		1+O(m^{-\frac1h})
		\right).	
	\end{equation}
	In all calculations we follow the convention that any product $\prod_{i\in \emptyset}$ over an empty set equals~$1$ and sum $\sum_{i\in \emptyset}$ over an empty set equals~$0$. Then all the following calculations hold true, even when $\Ind_2=\emptyset$. We start with considering the sum over the subsets of $\Ind_2$ only;
	\begin{align*}
		\sum_{\Lind\subseteq\Ind_2}(-1)^{|\Lind|}\left(1-\sum_{i\in\Jind\cup\Lind}p_i\right)^m
		&=\left(1-\sum_{i\in\Jind}p_i\right)^m
		\left(
		\sum_{\Lind\subseteq\Ind_2}(-1)^{|\Lind|}\left(1-\frac{\sum_{i\in \Lind}p_i}{1-\sum_{i\in\Jind}p_i}\right)^m
		\right)\\
		&=\left(1-\sum_{i\in\Jind}p_i\right)^m
		\Bigg[
		\sum_{\Lind\subseteq\Ind_2}(-1)^{|\Lind|}\exp\Big(-\sum_{i\in\Lind}mp_i\Big)\\
		&\hspace{1cm}+\sum_{\Lind\subseteq\Ind_2}(-1)^{|\Lind|}\(\left(1-\frac{\sum_{i\in \Lind}p_i}{1-\sum_{i\in\Jind}p_i}\right)^m-\exp\Big(-\sum_{i\in\Lind}mp_i\Big)\)
		\Bigg]\\
		&=\left(1-\sum_{i\in\Jind}p_i\right)^m\prod_{i\in\Ind_2}(1-e^{-mp_i})
		\left(
		1+\text{Er}_1
		\right), 
	\end{align*}
	where the error term $\text{Er}_1$ equals
	$$\text{Er}_1=\frac{\sum_{\emptyset\subsetneq \Lind\subseteq\Ind_2}(-1)^{|\Lind|}\[\left(1-\frac{\sum_{i\in \Lind}p_i}{1-\sum_{i\in\Jind}p_i}\right)^m-\exp\left(-\sum_{i\in\Lind}mp_i\right)\]}{\prod_{i\in\Ind_2}(1-e^{-mp_i})}.$$	
	Obviously $\text{Er}_1=0$ for $\Ind_2=\emptyset$. Before we proceed with technical details of estimating $\text{Er}_1$ for $\Ind_2\neq \emptyset$,  we return to \eqref{Eq:Serduszko}. By \eqref{Eq:Duzempibis} and the above equation
	\begin{equation}\label{Eq:SumaInd23jeden}
		\begin{split}
			&\sum_{\Lind\subseteq\Ind_2\cup \Ind_3}(-1)^{|\Lind|}\left(1-\sum_{i\in\Jind\cup\Lind}p_i\right)^m=\\
			&=\sum_{\Lind\subseteq\Ind_2}(-1)^{|\Lind|}\left(1-\sum_{i\in\Jind\cup\Lind}p_i\right)^m
			+\sum_{\Lind\subseteq\Ind_2\cup \Ind_3,  \Lind\cap\Ind_3\neq\emptyset}(-1)^{|\Lind|}\left(1-\sum_{i\in\Jind\cup\Lind}p_i\right)^m\\
			&\le \left(1-\sum_{i\in\Jind}p_i\right)^m\prod_{i\in\Ind_2}(1-e^{-mp_i})
			\left(
			1+\text{Er}_1
			\right) + 2^h\left(1-\sum_{i\in\Jind}p_i\right)^me^{-\omega}
			\\
			&= \left(1-\sum_{i\in\Jind}p_i\right)^m\prod_{i\in\Ind_2}(1-e^{-mp_i})
			\left(
			1+\text{Er}_1+\text{Er}_2
			\right),  
		\end{split}
	\end{equation}
	where 
	$$
	\text{Er}_2=\frac{2^he^{-\omega}}{\prod_{i\in\Ind_2}(1-e^{-mp_i})}.
	$$
	Similarly we get the lower bound
	\begin{equation}\label{Eq:SumaInd23dwa}
		\sum_{\Lind\subseteq\Ind_2\cup \Ind_3}(-1)^{|\Lind|}\left(1-\sum_{i\in\Jind\cup\Lind}p_i\right)^m\ge  \left(1-\sum_{i\in\Jind}p_i\right)^m\prod_{i\in\Ind_2}(1-e^{-mp_i})
		\left(
		1+\text{Er}_1-\text{Er}_2
		\right).
	\end{equation}
	
	Now we estimate the value of $|\text{Er}_1|$ for $\Ind_2\neq \emptyset$. Since $\Ind_2\neq \emptyset$,  there exists $i_0\in \Ind_2$ for which,  by definition of $\Ind_2$,  we have $mp_{i_0}\le \omega$,  i.e. $p_{i_0}\le \omega/m$.  Since we assume $p<1-c$,  the definition \eqref{Eq:pi} of $p_i $ gives us $p_{i_0}\ge p^h(1-p)^h$ and
	\begin{equation}\label{Eq:pipi0omega}
		\forall_{i\in \Ind\cup \Jind}\quad  p_i\le p^2\le c^{-2}(1-p)^2p^2 = c^{-2}(p^h(1-p)^h)^{2/h}\le c^{-2}p_{i_0}^{\frac{2}{h}}\le c^{-2}\left(\frac{\omega}{m}\right)^{\frac{2}{h}}.
	\end{equation}
	Denote
	$$
	x=\sum_{i\in \Lind}p_i\quad\text{ and }\quad y=\sum_{i\in\Jind}p_i.
	$$
	By the definition of $\Ind_2$,  \eqref{Eq:pipi0omega},  and a trivial bound $|\Ind\cup \Jind|\le 2^h$,  we get for $\emptyset\subsetneq \Lind\subseteq \Ind_2$ 
	\begin{equation}\label{Eq:mxixplusy}
		mx=\sum_{i\in\Lind}mp_i\le 2^h\omega\quad \text{ and }\quad x+y=\sum_{i\in\Lind\cup\Jind}p_i\le 2^hc^{-2}\omega^{\frac{2}{h}}m^{-\frac{2}{h}}.
	\end{equation}
	Moreover,  as the empty set is neither in a clique cover nor a forbidden set
	\begin{equation}\label{Eq:1minusxy}
		1-x-y \ge (1-p)^h \ge c^{h}.
	\end{equation}
	We use the facts that $e^{-u/(1-u)}\le 1-u$,  for $u\in(0, 1)$,  and $1-u\le e^{-u}$ for any $u$. Moreover $0<x/(1-y)<1$ as $x+y<1$,  $x>0$,  and $y < 1$ (for $\Lind\neq \emptyset$). Therefore
	\begin{align*}
		\exp(-mx)\geq \left(1-\frac{x}{1-y}\right)^m&\geq \exp\({-\frac{mx}{(1-x-y)}}\)\\
		&=\exp\({-mx}\)\exp\({-\frac{mx(x+y)}{1-x-y}}\)\\
		&\geq \exp(-mx)\(1-\frac{mx(x+y)}{1-x-y}\)\\
		&\geq  \exp(-mx)-\frac{2^{2h}\omega^{\frac{h+2}{h}}}{m^{\frac{2}{h}}c^{h+2}},
	\end{align*}
	where in the last line we used \eqref{Eq:mxixplusy} and \eqref{Eq:1minusxy}.
	
	Using this and the inequality $1-e^{-u}\geq u(1-u),  u\geq0$,  we get for $m\geq2$
	\begin{align*}
		|\text{Er}_1|&\le \frac{\sum_{\emptyset\subsetneq\Lind\subseteq\Ind_2}\[\exp\left(-\sum_{i\in\Lind}mp_i\right)-\left(1-\frac{\sum_{i\in \Lind}p_i}{1-\sum_{i\in\Jind}p_i}\right)^m\]}{\prod_{i\in\Ind_2}(1-e^{-mp_i})}\\[5pt]
		&\le \frac{2^h}{(1-e^{-\varepsilon})^{|\Ind_2|}}\cdot\frac{2^{2h}\omega^{\frac{h+2}{h}}}{m^{\frac{2}{h}}c^{h+2}}
		\le \frac{2^{3h}}{({1-\varepsilon})^{2^h}c^{h+2}}\cdot\frac{\omega^{\frac{h+2}{h}}}{m^{\frac{2}{h}}\varepsilon^{2^h}}\\
		&\le\frac{2^{3h}}{({1-2^{-1/h2^{h+1}}})^{2^h}c^{h+2}}\, m^{-\frac1h}=:C_{h, c}\, m^{-\frac1h}.
	\end{align*}
	In addition,  for large $m$, 
	\begin{align*}
		\text{Er}_2=\frac{2^he^{-\omega}}{\prod_{i\in\Ind_2}(1-e^{-mp_i})}
		\le \frac{2^h e^{-m^{\frac{1}{2(h+2)}}}}{(1-\varepsilon)^{2^h}\varepsilon^{2^h}}
		\le \frac{2^h}{({1-2^{-1/h2^{h+1}}})^{2^h}}\cdot \frac{e^{-m^{\frac{1}{2(h+2)}}}}{m^{-\frac{1}{2h}}}\le m^{-\frac1h}.
	\end{align*}
	Moreover,  note that for $i\in \Ind_3$ we have $e^{-mp_i}\le e^{-m^{\frac{1}{2(h+2)}}}\ll m^{-\frac1h}$, and hence
	$$
	1=\prod_{i\in\Ind_3}(1-e^{-mp_i})\left(1+o(m^{-\frac1h})\right).
	$$
	We apply the above equality and the bounds $\text{Er}_1=O(m^{-1/h})$,  $\text{Er}_2=O(m^{-1/h})$ to \eqref{Eq:SumaInd23jeden} and \eqref{Eq:SumaInd23dwa}, which leads to  \eqref{Eq:Serduszko}.
	Then \eqref{Eq:Serduszko} combined with \eqref{Eq:PiInclusionExclusion} implies the first assertion of Lemma~\ref{Lem:CCprobability} in the case $\Ind_1=\emptyset$.
	
	Now we focus on the case $\Ind_1\neq \emptyset$.
	Let $|\Ind_1|=k\geq 1$. 
	Before we start,  we remark that by \eqref{Eq:PiInclusionExclusion2},  if we repeat all the calculations leading to \eqref{Eq:Serduszko},  we get that for any constant $k$ and some $\text{Er}_0=O(m^{-\frac1h})$
	
	\begin{equation}\label{Eq:SumaInd23cztery}
		\begin{split}
			\sum_{\{(a_i,  i\in\Ind_2\cup \Ind_3)^{\ge1}\}}\binom{m-k}{a_i,  i\in\Ind}p_0^{a_0}\prod_{i\in \Ind}p_i^{a_i}
			&=\sum_{\Lind\subseteq\Ind_2\cup \Ind_3}(-1)^{|\Lind|}\left(1-\sum_{i\in\Jind\cup\Ind_1\cup\Lind}p_i\right)^{m-k}\\
			&=\left(1-\sum_{i\in\Jind\cup\Ind_1}p_i\right)^{m-k}\prod_{i\in\Ind_2\cup\Ind_3}(1-e^{-(m-k)p_i})
			\left(
			1+\text{Er}_0
			\right).
		\end{split}
	\end{equation}
	We start with the lower bound on $\pi(H,  \cover_+,  \cover_-)$. By \eqref{Eq:Pi2}
	\begin{equation}\label{Eq:I1lower}
		\begin{split}
			\pi(H,  \cover_+,  \cover_-)
			&=\sum_{\{(a_i,  i\in\Ind)^{\ge 1}\}}\binom{m}{a_i,  i\in\Ind}p_0^{a_0}\prod_{i\in \Ind}p_i^{a_i}\\
			&\ge \sum_{\substack{\{(a_i,  i\in\Ind)^{\ge1}\}\\   a_i=1 \text{ for }i\in \Ind_1}}\binom{m}{a_i,  i\in\Ind}p_0^{a_0}\prod_{i\in \Ind}p_i^{a_i}\\
			&=\left((m)_k\prod_{i\in \Ind_1}p_i\right)\cdot\left( \sum_{\{(a_i,  i\in\Ind_2\cup\Ind_3)^{\ge 1}\}}\binom{m-k}{a_i,  i\in\Ind_2\cup\Ind_3}p_0^{a_0}\prod_{i\in \Ind}p_i^{a_i}\right).
		\end{split}
	\end{equation}	
	We analyse the first factor of the product from the last line of \eqref{Eq:I1lower}. For large $m$, 
	\begin{equation}\label{Eq:I1lowerfirst}
		(m)_k\prod_{i\in \Ind_1}p_i
		\ge \left(1-km^{-1}\right)\prod_{i\in \Ind_1}mp_i
		\ge \left(1-m^{-\frac1h}\right)\prod_{i\in \Ind_1}\left(1-e^{-mp_i}\right).
	\end{equation}
	The value of the other factor is estimated using \eqref{Eq:SumaInd23cztery}
	\begin{multline}\label{Eq:I1lowersecond}
		\sum_{\{(a_i,  i\in\Ind_2\cup\Ind_3)^{\ge 1}\}}\binom{m-k}{a_i,  i\in\Ind_2\cup\Ind_3}p_0^{a_0}\prod_{i\in \Ind}p_i^{a_i}
		=\left(1-\sum_{i\in\Jind\cup\Ind_1}p_i\right)^{m-k}\prod_{i\in\Ind_2\cup\Ind_3}(1-e^{-(m-k)p_i})(1+\text{Er}_0)\\
		\ge 
		\left(1-\sum_{i\in\Jind\cup\Ind_1}p_i\right)^{m}
		\prod_{i\in\Ind_2\cup\Ind_3}(1-e^{-mp_i})
		\left(1-\frac{e^{-mp_i}(e^{kp_i}-1)}{1-e^{-mp_i}}\right)(1+\text{Er}_0).
	\end{multline}
	Let us remind that $\Ind_1\neq \emptyset$. Therefore there exists $i_0\in \Ind_1$ for which,  by definition,  $mp_{i_0}\le \varepsilon\le 1$,  i.e. $$p^h(1-p)^h\le p^{|C_{i_0}|}(1-p)^{h-|C_{i_0}|}=p_{i_0}\le m^{-1}.$$ 
	Since also $p<1-c$ for some constant $c$ then, then  we have 
	$p\le c^{-1}m^{-\frac1h}$ and 
	\begin{equation}\label{Eq:pimale}
		\forall_i\ p_i\le p^2\le c^{-2}m^{-\frac2h} = o(1).
	\end{equation} 
	Thus,  as $e^u-1\le 2u$ for $u\in (0, 1)$ and $e^u-1\ge u$,  for constant $k$ and large $m$,  we have
	\begin{equation}\label{Eq:I1lowersecondbis}
		\frac{e^{-mp_i}(e^{kp_i}-1)}{1-e^{-mp_i}}=\frac{e^{kp_i}-1}{e^{mp_i}-1}\le \frac{2kp_i}{mp_i}\le \frac{k}{m} \ll m^{-\frac1h}.
	\end{equation}
	Moreover, using the definition of $\Ind_1$ and \eqref{Eq:pimale}
	\begin{equation}\label{Eq:I1lowersecondbisbis}
		\begin{split}
			\frac{\left(1-\sum_{i\in\Jind\cup\Ind_1}p_i\right)^{m}}
			{\left(1-\sum_{i\in\Jind}p_i\right)^m}
			&=\left(
			1-\frac{\sum_{i\in \Ind_1}p_i}{1-\sum_{i\in \Jind}p_i}
			\right)^m\\
			&\ge 1-\frac{\sum_{i\in \Ind_1}mp_i}{1-\sum_{i\in \Jind}p_i}
			\ge 1-\frac{k\varepsilon}{1-2^hc^{-2}m^{-\frac2h}}=1-O\left(m^{-\frac{1}{h2^{h+1}}}\right).
		\end{split}
	\end{equation}
	Combining \eqref{Eq:I1lower},  \eqref{Eq:I1lowerfirst},  \eqref{Eq:I1lowersecond},  \eqref{Eq:I1lowersecondbis},  and \eqref{Eq:I1lowersecondbisbis},  we get
	\begin{equation}\label{Eq:I1lowerFinal}
		\pi(H,  \cover_+,  \cover_-)\ge \left(1+O\left(m^{-\frac{1}{h2^{h+1}}}\right)\right)\left(1-\sum_{i\in\Jind}p_i\right)^{m}
		\prod_{i\in\Ind}(1-e^{-mp_i}).
	\end{equation}
	Now we show the upper bound. First we need an auxiliary fact. For convenience, let us assume that $\Ind_1=\{1, \ldots, k\}$ and $\Ind_2\cup\Ind_3=\{k+1, \ldots, t_0\}$.
	Given a sequence $(a_1, \ldots, a_{t_0})$ from $\{(a_i,  i\in\Ind)^{\ge 1}\}$ we have
	\begin{align*}
		&\binom{m}{a_i,  i\in\Ind}p_0^{m-\sum_{j\in\Ind}a_j}\prod_{i\in \Ind}p_i^{a_i}=\\
		&=
		\binom{m}{a_1}\binom{m-a_1}{a_2}\ldots\binom{m-\sum_{j=1}^{k-1}a_j}{a_k}
		\left(\prod_{i\in \Ind_1}\left(\frac{p_i}{p_0}\right)^{a_i}\right)\\
		&\quad\cdot 
		\frac
		{\left(m-\sum_{j=1}^{k}a_j\right)!}
		{\left(m-\sum_{j=1}^{k}a_j-\sum_{j\in \Ind_2\cup\Ind_3}a_j\right)!\cdot\prod_{i\in \ind_2\cup \ind_3}a_i!}p_0^{m-\sum_{j\in \Ind_2\cup\Ind_3}a_j}\prod_{i\in \Ind_2\cup\Ind_3}p_i^{a_i}\\
		&\le
		\left(\prod_{i\in \Ind_1}\binom{m}{a_i}\left(\frac{p_i}{p_0}\right)^{a_i}\right)
		\frac
		{ m!}
		{\left(m-\sum_{j\in \Ind_2\cup\Ind_3}a_j\right)!\cdot\prod_{i\in \ind_2\cup \ind_3}a_i!}p_0^{m-\sum_{j\in \Ind_2\cup\Ind_3}a_j}\prod_{i\in \Ind_2\cup\Ind_3}p_i^{a_i}
		\\
		&=
		\left(\prod_{i\in \Ind_1}\binom{m}{a_i}\left(\frac{p_i}{p_0}\right)^{a_i}\right)
		\binom{m}{a_i, i\in \Ind_2\cup\Ind_3}
		p_0^{m-\sum_{j\in \Ind_2\cup\Ind_3}a_j}\prod_{i\in \Ind_2\cup\Ind_3}p_i^{a_i}	.
	\end{align*}
	 Since  $\{(a_i,  i\in\Ind)^{\ge 1}\}\subseteq \{1, \ldots, m\}^{t_0}\times \{(a_i,  i\in\Ind_2\cup\Ind_3)^{\ge 1}\}$, 
	 from \eqref{Eq:Pi2} we greedily  get
	\begin{equation}\label{Eq:Upper1}
		\begin{split}
			&\pi(H,  \cover_+,  \cover_-)
			=\sum_{\{(a_i,  i\in\Ind)^{\ge 1}\}}\binom{m}{a_i,  i\in\Ind}p_0^{a_0}\prod_{i\in \Ind}p_i^{a_i}\\[5pt]
			&\le\left(\prod_{i\in \Ind_1}
			\sum_{a_i=1}^{m}
			\binom{m}{a_i}
			\left(
			\frac{p_i}{p_0}
			\right)^{a_i}
			\right) \cdot
			\left(\sum_{\{(a_i,  i\in\Ind_2\cup\Ind_3)^{\ge 1}\}}\binom{m}{a_i, i\in\Ind_2\cup\Ind_3}
			p_0^{a_0}\prod_{i\in \Ind_2\cup \Ind_3}p_i^{a_i}\right).
		\end{split}
	\end{equation}
	We upper bound the first factor of the product in the last line of \eqref{Eq:Upper1}.
	\begin{equation}\label{Eq:Upper15}
		\prod_{i\in \Ind_1}
		\left(
		\sum_{a_i=1}^{m}
		\binom{m}{a_i}
		\left(
		\frac{p_i}{p_0}
		\right)^{a_i}
		\right)
		=\prod_{i\in \Ind_1}
		\left(\left(1+\frac{p_i}{p_0}\right)^m-1\right)
		\le 	\prod_{i\in \Ind_1}
		\left(e^{\frac{mp_i}{p_0}}-1\right).		
	\end{equation}
	Recall that,  for $i\in \Ind_1$,  $mp_i\le \varepsilon = m^{-\frac{1}{h2^{h+1}}}= o(1)$. Then, if $\Ind_1\neq \emptyset$, by \eqref{Eq:pimale} we get
	$$
	p_0=1-\sum_{i\in \Ind\cup\Jind}p_i\ge 1-2^hc^{-2}m^{-\frac2h} = 1-o(m^{-\frac1h}).
	$$
	Since $e^u\le 1 + 2u$,  for $u\in (0, 1)$,  and $u\le e^u-1$,  for all $u$, for $i\in\Ind_1$ and large $m$ 
	\begin{align*}
		e^{\frac{mp_i}{p_0}}-1
		&=
		e^{mp_i}e^{mp_i\frac{1-p_0}{p_0}}-1
		\le 
		e^{mp_i}\left(1+2mp_i\frac{1-p_0}{p_0}\right)-1=\\
		&=
		e^{mp_i}-1+mp_ie^{mp_i}\cdot 2\frac{1-p_0}{p_0}\\
		&\le 
		(e^{mp_i}-1)\left(1+3\frac{1-p_0}{p_0}\right)=(e^{mp_i}-1)(1+o(m^{-\frac1h})).
	\end{align*} 
	Substituting this to \eqref{Eq:Upper15} we obtain
	\begin{align}
		\prod_{i\in \Ind_1}
		\left(
		\sum_{a_i=1}^{m}
		\binom{m}{a_i}
		\left(
		\frac{p_i}{p_0}
		\right)^{a_i}
		\right)
		&\le
		\left(1+o(m^{-\frac1h})\right) 
		\prod_{i\in \Ind_1}
		\left(
		e^{mp_i}-1
		\right)
		\\\label{Eq:Upper2}&=
		\left(1+o(m^{-\frac1h})\right) 
		\prod_{i\in \Ind_1}
		\left(1-
		e^{-mp_i}
		\right)
		e^{\sum_{i\in \Ind_1}mp_i}.
	\end{align}
	Now we find the upper bound for the second term of the product from the last line of \eqref{Eq:Upper1}. By \eqref{Eq:SumaInd23cztery}
	\begin{equation}\label{Eq:Upper3}
		\begin{split}
			&\sum_{\{(a_i,  i\in\Ind_2\cup\Ind_3)^{\ge 1}\}}\binom{m}{a_i, i\in\Ind_2\cup\Ind_3}
			p_0^{a_0}\prod_{i\in \Ind_2\cup \Ind_3}p_i^{a_i}\\
			&=\left(
			1+O\left(m^{-\frac1h}\right)
			\right)
			\left(1-\sum_{i\in\Jind\cup\Ind_1}p_i\right)^{m}\prod_{i\in\Ind_2\cup\Ind_3}(1-e^{-mp_i})\\
			&\le\left(
			1+O\left(m^{-\frac1h}\right)
			\right)
			\left(1-\sum_{i\in\Ind_1}p_i\right)^{m}\left(1-\sum_{i\in\Jind}p_i\right)^{m}\prod_{i\in\Ind_2\cup\Ind_3}(1-e^{-mp_i})
			\\
			&\le\left(
			1+O\left(m^{-\frac1h}\right)
			\right)
			e^{-\sum_{i\in\Ind_1} mp_i}\left(1-\sum_{i\in\Jind}p_i\right)^{m}\prod_{i\in\Ind_2\cup\Ind_3}(1-e^{-mp_i}).
		\end{split}
	\end{equation}
	Therefore, \eqref{Eq:Upper1},  \eqref{Eq:Upper2}  and \eqref{Eq:Upper3} imply
	$$
	\pi(H,  \cover_+,  \cover_-)\le \left(1+O\left(m^{-\frac{1}{h}}\right)\right)\left(1-\sum_{i\in\Jind}p_i\right)^{m}.
	\prod_{i\in\Ind}(1-e^{-mp_i}).
	$$  
	The upper bound matches the lower bound from \eqref{Eq:I1lowerFinal}, which completes the proof of \eqref{eq:CC1}.
	If $mp^2=o(1)$, then $p=o(1)$, $mp_C=	mp^{|C|}(1-p)^{|V(H)|-|C|}=mp^{|C|}(1+O(p))$ for all $C$, $|C|\ge 2$. Therefore
\begin{equation*}
\begin{split}
		\pi(H, \cover_+,  \cover_-) 
		&= \left(1+O\left(m^{-\frac{1}{h2^{h+1}}}\right)\right)  \left(1-\sum_{C\in \cover_-}p_C\right)^{m}\prod_{C\in\cover_+}(1-e^{-mp_C})\\
		&= \left(1+O\left(m^{-\frac{1}{h2^{h+1}}}\right)\right)
		e^{-\sum_{C\in \cover_-}(1+O(p))mp_C}
		\prod_{C\in\cover_+}(1-e^{-mp_C})\\
		&=\left(1+O\left(m^{-\frac{1}{h2^{h+1}}}+p+mp^2\right)\right)\prod_{C\in\cover_+}mp^{|C|}.
\end{split}\end{equation*}
Similarly, for $mp^3\le 1$, we have $p=o(1)$, $p_C= p^{|C|}(1+O(p))$,  and hence for $|C|\geq3$
\begin{align*}
e^{-mp_C}&\asymp 1,\\
1-e^{-mp_C}&\asymp 1\wedge mp_C\asymp mp_C\asymp mp^{|C|},
\end{align*}
while for $|C|=2$ it holds that
\begin{align*}
e^{-mp_C}&=e^{mp^2+O(mp^3)}\asymp e^{mp^2},\\
1-e^{-mp_C}&\asymp 1\wedge mp_C\asymp 1\wedge mp^2\asymp 1-e^{-mp^2}.
\end{align*}
Consequently, from \eqref{eq:CC1} we get 
\begin{equation*}
	\begin{split}
		\pi(H, \cover_+,  \cover_-) 
		&=\left(1+O\left(m^{-\frac{1}{h2^{h+1}}}\right)\right)
		e^{-\sum_{C\in \cover_-}(1+O(p))mp_C}\prod_{C\in\cover_+}(1-e^{-mp_C})
		\\
		&\asymp
		e^{-\sum_{C\in \cover_-: |C|=2}mp_2+O(mp^3)}
		\prod_{\substack{C\in\cover_+\\ |C| =2}} \(1-e^{-mp^2}\)\prod_{\substack{C\in\cover_+\\ |C| \geq3}} mp^{|C|}\\
		&\asymp e^{-|\cover_-^{(2)}|mp^2}(1-e^{-mp^2})^{|\cover_+^{(2)}|}\prod_{C\in\cover_+\setminus \cover_+^{(2)}}mp^{|C|}.
	\end{split}
\end{equation*}
This concludes the proof.	
\end{proof}

The following lemma is a straightforward consequence of the reasoning used in the proof of Lemma~\ref{Lem:CCprobability}. It will allow us to compare the probabilities of graphs with similar, though not identical, structures.
Let $\cover$ and $\cover'$ be two families of sets. We write $\cover\Bumpeq\cover'$  if  there is a bijection $f:\cover\rightarrow \cover'$ such that $|f(C)|=|C|$ for any $C\in \cover$.

\begin{lemma}\label{Lem:CCequality}
	Let $H_1$ and $H_2$ be two graphs with the same number of vertices. Let,   for $i=1,  2$,   $\cover_+^{(i)}$ be a clique cover of $H_i$ and 
	$\cover_-^{(i)}\subseteq \calP(H_i)\setminus \cover_+^{(i)}$. If $\cover_+^{(1)}\Bumpeq\cover_+^{(2)}$  and $\cover_-^{(1)}\Bumpeq\cover_-^{(2)}$, then
	$$
	\pi(H_1,  \cover_+^{(1)},  \cover_-^{(1)})=\pi(H_2, \cover_+^{(2)},  \cover_-^{(2)}).
	$$
\end{lemma}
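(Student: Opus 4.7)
The plan is to read the exact, non-asymptotic identity for $\pi(H,\cover_+,\cover_-)$ that was derived inside the proof of Lemma~\ref{Lem:CCprobability}, namely the inclusion--exclusion formula
\begin{equation*}
\pi(H,\cover_+,\cover_-)=\sum_{\Lind\subseteq\Ind}(-1)^{|\Lind|}\left(1-\sum_{i\in\Jind\cup\Lind}p_i\right)^{m},
\end{equation*}
where $\Ind$ indexes $\cover_+$, $\Jind$ indexes $\cover_-$, and $p_i=p_{C_i}=p^{|C_i|}(1-p)^{|V(H)|-|C_i|}$. This identity was obtained from the inclusion--exclusion principle together with the multinomial identity \eqref{Eq:SumazLI}, and its derivation uses neither a growth assumption on $m$ nor any restriction on $p$, so it is available in full generality.

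The key observation is that the right-hand side depends on $H$ only through (i) the number $|V(H)|$ of vertices, which enters via the exponent $|V(H)|-|C|$ in each $p_C$, and (ii) the multisets of cardinalities $\{|C|:C\in\cover_+\}$ and $\{|C|:C\in\cover_-\}$, which determine the multiset of values $\{p_C\}$. Any bijection $f:\cover_+^{(1)}\to\cover_+^{(2)}$ with $|f(C)|=|C|$, and similarly for $\cover_-$, therefore induces a cardinality-preserving bijection between the index sets $(\Ind^{(1)},\Jind^{(1)})$ and $(\Ind^{(2)},\Jind^{(2)})$ under which the assigned probabilities $p_i$ agree term by term, provided that the ambient vertex counts $|V(H_1)|$ and $|V(H_2)|$ coincide, which is exactly one of the hypotheses.

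Concretely, I would proceed as follows. First, fix a cardinality-preserving bijection between $\cover_+^{(1)}\cup\cover_-^{(1)}$ and $\cover_+^{(2)}\cup\cover_-^{(2)}$ obtained by concatenating the two given bijections; this sends $\cover_+^{(1)}$ to $\cover_+^{(2)}$ and $\cover_-^{(1)}$ to $\cover_-^{(2)}$. Next, use this bijection to relabel the indices in $\Ind^{(1)}\cup\Jind^{(1)}$ so that $p_i$ from the first graph equals $p_i$ from the second for every matched index, which is legitimate because $|V(H_1)|=|V(H_2)|$ makes $p_C=p^{|C|}(1-p)^{|V(H_1)|-|C|}=p^{|C|}(1-p)^{|V(H_2)|-|C|}$ depend only on $|C|$. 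Finally, reindex the sum over $\Lind\subseteq\Ind^{(1)}$ by the corresponding subsets of $\Ind^{(2)}$: each term $(-1)^{|\Lind|}\bigl(1-\sum_{i\in\Jind^{(1)}\cup\Lind}p_i\bigr)^m$ is identical to its image, and summing gives the desired equality $\pi(H_1,\cover_+^{(1)},\cover_-^{(1)})=\pi(H_2,\cover_+^{(2)},\cover_-^{(2)})$.

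There is essentially no analytic obstacle here; the entire content is the observation that the closed-form \eqref{Eq:PiInclusionExclusion} depends on $(H,\cover_+,\cover_-)$ only through the data $(|V(H)|,\{|C|\}_{\cover_+},\{|C|\}_{\cover_-})$, and that this data is preserved by the hypothesis $\Bumpeq$. The only point requiring a little care is to record explicitly that the derivation of the inclusion--exclusion identity inside the proof of Lemma~\ref{Lem:CCprobability} is exact and free of asymptotic hypotheses, so that the conclusion of Lemma~\ref{Lem:CCequality} holds as a genuine equality rather than only up to lower-order terms.
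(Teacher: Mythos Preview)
Your proposal is correct and follows essentially the same route as the paper: both argue that an exact (non-asymptotic) formula for $\pi(H,\cover_+,\cover_-)$ obtained in the proof of Lemma~\ref{Lem:CCprobability} depends only on $|V(H)|$ and the multisets of cardinalities $\{|C|:C\in\cover_+\}$, $\{|C|:C\in\cover_-\}$, which is precisely what the relation $\Bumpeq$ preserves. The only cosmetic difference is that the paper cites the multinomial representation \eqref{Eq:Pi2} while you cite the equivalent inclusion--exclusion form \eqref{Eq:PiInclusionExclusion}; either identity makes the dependence on the data $(|V(H)|,\{|C|\}_{\cover_+},\{|C|\}_{\cover_-})$ transparent.
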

\begin{proof}
	By the definition of $p_i$ \eqref{Eq:pi} and the equality \eqref{Eq:Pi2},  for any $H$,   the value of $\pi(H,   \cover_+,  \cover_- )$ depends only on the size of its vertex set and on the cardinalities of the sets in $\cover_+$ and $\cover_-$.  Thus,  Lemma~\ref{Lem:CCequality} follows.
\end{proof}
The next corollary plays a crucial role in the proof of  Lemma~\ref{Lem:HIJprawdopodobienstwa}.
\begin{cor}\label{cor:piFuG}
Let $F, F', G, G'$ be graphs with the same vertex set, and  satisfying $F\cong F'$, $|E(G)|=|E(G')|$ and $E(F\cap G)=E(F'\cap G')=\emptyset$. Then 
$$\pi(F\cup G, \covers(F, G))=\pi(F'\cup G', \covers(F',G')),$$
where $\covers(F, G)$ is the set of all clique covers $\cover$ of $F\cup G$ such that $E({G})\subseteq \cover\subseteq \covers(F\cup G)\setminus E(F)$.
\end{cor}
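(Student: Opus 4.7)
The plan is to express both sides as a common double inclusion--exclusion whose terms depend on $(F,G)$ only through quantities preserved by the hypotheses. Unfolding the definition,
\[
\pi(F\cup G,\covers(F,G)) = \P\bigl(\forall\,e\in E(G)\text{ some attribute builds }e,\ \forall\,e\in E(F)\text{ no attribute builds }e,\ F\cup G\subseteq\Gnmp\bigr);
\]
since the first condition already forces $G\subseteq\Gnmp$, the last one reduces to $F\subseteq\Gnmp$. Applying inclusion--exclusion to $F\subseteq\Gnmp=\bigcap_{e\in E(F)}\{e\in\Gnmp\}$ then gives
\[
\pi(F\cup G,\covers(F,G)) = \sum_{R\subseteq E(F)}(-1)^{|R|}\P\bigl(\forall\,e\in E(G)\text{ some attribute builds }e,\ \forall\,e\in E(F)\text{ no attribute builds }e,\ \forall\,e\in R:\ e\notin\Gnmp\bigr).
\]

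For fixed $R$, the joint event constrains each attribute's built set to the family $\mathcal{A}(R):=I(V,R)\setminus(E(F)\setminus R)$, where $I(V,R)$ denotes the independent sets of the graph $(V,R)$: the "uncovered" requirement on $R$ forces the built set to be independent in $(V,R)$, while the "not built" requirement on $E(F)\setminus R$ excludes those specific 2-sets. The hypothesis $E(F)\cap E(G)=\emptyset$ together with $R\subseteq E(F)$ guarantees $E(G)\subseteq\mathcal{A}(R)$, so a second inclusion--exclusion over the subsets $\Lind\subseteq E(G)$ of mandatorily built edges evaluates the inner probability as
\[
\sum_{k=0}^{|E(G)|}\binom{|E(G)|}{k}(-1)^k\bigl(q(R)-(|E(F)|-|R|+k)p_2\bigr)^m,
\]
with $p_2=p^2(1-p)^{|V|-2}$ and $q(R):=\sum_{T\in I(V,R)}p^{|T|}(1-p)^{|V|-|T|}$, the probability that a single attribute's built set is independent in $(V,R)$. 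Hence the summand depends on $R$ only through $|R|$ and the isomorphism class of $(V,R)$.

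Since $F\cong F'$ and both graphs share the vertex set $V$, there exists a permutation $\psi\colon V\to V$ with $\psi(E(F))=E(F')$; this gives a cardinality-preserving bijection $R\leftrightarrow\psi(R)$ between subsets of $E(F)$ and $E(F')$ with $(V,R)\cong(V,\psi(R))$ as graphs, whence $q(R)=q(\psi(R))$ by the invariance of the product Bernoulli measure on $V$ under vertex relabeling. Combined with $|E(F)|=|E(F')|$ and $|E(G)|=|E(G')|$, the two double sums for $(F,G)$ and $(F',G')$ agree term by term, giving the claim. The main technical hurdle is bookkeeping the two nested inclusion--exclusions: one must recognise that "$e\in R$ uncovered" subsumes "$e$ not built", so only edges in $E(F)\setminus R$ require separate exclusion, and verify that all 2-element sets in $E(G)$ and $E(F)\setminus R$ lie in $I(V,R)$ (contributing $p_2$ each when subtracted from $q(R)$)---this is precisely where the disjointness $E(F)\cap E(G)=\emptyset$ and the restriction $R\subseteq E(F)$ are used.
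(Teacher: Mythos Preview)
Your argument is correct and takes a genuinely different route from the paper's.

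The paper proceeds at the clique-cover level: it partitions $\covers(F,G)$ according to which clique cover $\cover\in\covers^{>2}(F)$ (clique covers of $F$ containing no $2$-sets) is realised, writes
\[
\pi(F\cup G,\covers(F,G))=\sum_{\cover\in\covers^{>2}(F)}\pi\bigl(F\cup G,\cover\cup E(G),\calP(F)\setminus\cover\bigr),
\]
and then invokes Lemma~\ref{Lem:CCequality}, which says that $\pi(H,\cover_+,\cover_-)$ depends only on $|V(H)|$ and the multiset of cardinalities in $\cover_+$ and $\cover_-$. The isomorphism $F\cong F'$ furnishes a bijection $\covers^{>2}(F)\to\covers^{>2}(F')$ matching these data term by term.

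You instead bypass the clique-cover machinery entirely and work at the level of attribute events: after the probabilistic rephrasing you run inclusion--exclusion over the edge set of $F$ (for the containment $F\subseteq\Gnmp$) and then over $E(G)$ (for the ``each edge built'' requirement), arriving at an explicit closed formula whose terms involve only $|V|$, $|E(F)|$, $|E(G)|$, $|R|$ and the independence polynomial value $q(R)$. The isomorphism $F\cong F'$ then matches the sums over $R\subseteq E(F)$ and $R'\subseteq E(F')$ directly. Your approach is more elementary and self-contained --- it does not need the $\pi(H,\cover_+,\cover_-)$ formalism or Lemma~\ref{Lem:CCequality} --- at the cost of redoing a small piece of that lemma's content inside the argument (the exchangeability step $q(R)=q(\psi(R))$). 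The paper's route is more modular within its framework, reusing machinery already set up.
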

\begin{proof}Denote by $\covers^{>2}(F)$ the family of all clique covers of $F$ that do not contain $2$--sets. In order to ensure the accuracy of our reasoning in the case $E(F)=\emptyset$, we set $\calP(F)=\{\emptyset\}$ and $ \covers^{>2}(F)=\{\emptyset\}$.  Then we have
$$\covers(F,G)=\bigcup_{\cover\in \covers^{>2}(F)}\covers\(F\cup G, \cover\cup E(G), \calP(F)\setminus \cover\).$$
Consider $\cover_1, \cover_2\in \covers^{>2}({F})$,  $\cover_1\neq \cover_2$, and denote $\covers^{(i)}=\covers\(F\cup G, \cover_i\cup E(G), \calP(F)\setminus \cover_i\)$, $i=1,2$.
	Without loss of generality we may assume that there exists $C\in \cover_1\setminus\cover_2$. Then, for any $\cover\in \covers^{(1)}$ we have $C\in \cover$. On the other hand, $C$ is forbidden in any clique cover from $\covers^{(2)}$ and hence $C\not\in \cover$ for any $\cover\in \covers^{(2)}$. Consequently,   the families in the sum above are disjoint, which allows us to write
\begin{align}\nonumber
\pi(F\cup G, \covers(F, G))&=\sum_{\cover\in\covers (F, G)}\pi(F\cup G, \cover)\\\label{Eq42:CCsHIsuma}
&=\sum_{\cover\in \covers^{>2}(F)}\pi\big(F\cup G, \cover\cup E(G), \calP(F)\setminus \cover\big).
\end{align}
	Let $\varphi:V(F)\to V(F')$ be an isomorphism of $F$ and $F'$.
	\eqref{Eq:HIIpromHIIbis}. Then there is a natural bijection $\psi:\covers^{>2}(F)\to \covers^{>2}(F')$ given by the formula $\psi(\cover)=\{\varphi(C): C\in \cover\}$, for $\cover\in \covers^{>2}(F)$. Clearly, for any $\cover\in \covers^{>2}(F)$, $\cover\Bumpeq\psi(\cover)$, and consequently
	\begin{align*}
		\cover\cup E(G)
		&\Bumpeq\psi(\cover)\cup E(G');\\
		\calP(F)\setminus\cover
		&\Bumpeq  \calP(F')\setminus\phi(\cover),	
	\end{align*}
	where we also used $\cover\cap E(G)= \psi(\cover)\cap E(G')=\emptyset$.
	Therefore, by Lemma~\ref{Lem:CCequality}, for any $\cover\in \covers^{>2}(F)$ we have
	\begin{align}
		&\pi(F\cup G,   \cover\cup E(G),  \calP(F)\setminus\cover)=
		\pi(F'\cup G',  \psi(\cover)\cup E(G'),  \calP(F')\setminus\psi(\cover)).
	\end{align}
	Eventually, using the above equality and the decomposition \eqref{Eq42:CCsHIsuma} we obtain
\begin{align}\nonumber
\pi(F\cup G, \covers(F, G))
&=\sum_{\cover\in \covers^{>2}(F)}\pi\big(F\cup G, \cover\cup E(G), \calP(F)\setminus \cover\big)\\
&=\sum_{\cover\in \covers^{>2}(F)}\pi(F'\cup G',  \psi(\cover)\cup E(G'),  \calP(F')\setminus\psi(\cover))\\
&=\sum_{\cover\in \covers^{>2}(F')}\pi\big(F'\cup G', \cover\cup E(G'), \calP(F')\setminus \cover\big)\\
&=\pi(F'\cup G', \covers(F', G')),
\end{align}
as required.
\end{proof}

\section{Graphic representations of integrals}\label{sec:GraphicRepr}
\begin{figure}[H]
	\begin{center}
		\includegraphics[width=400pt]{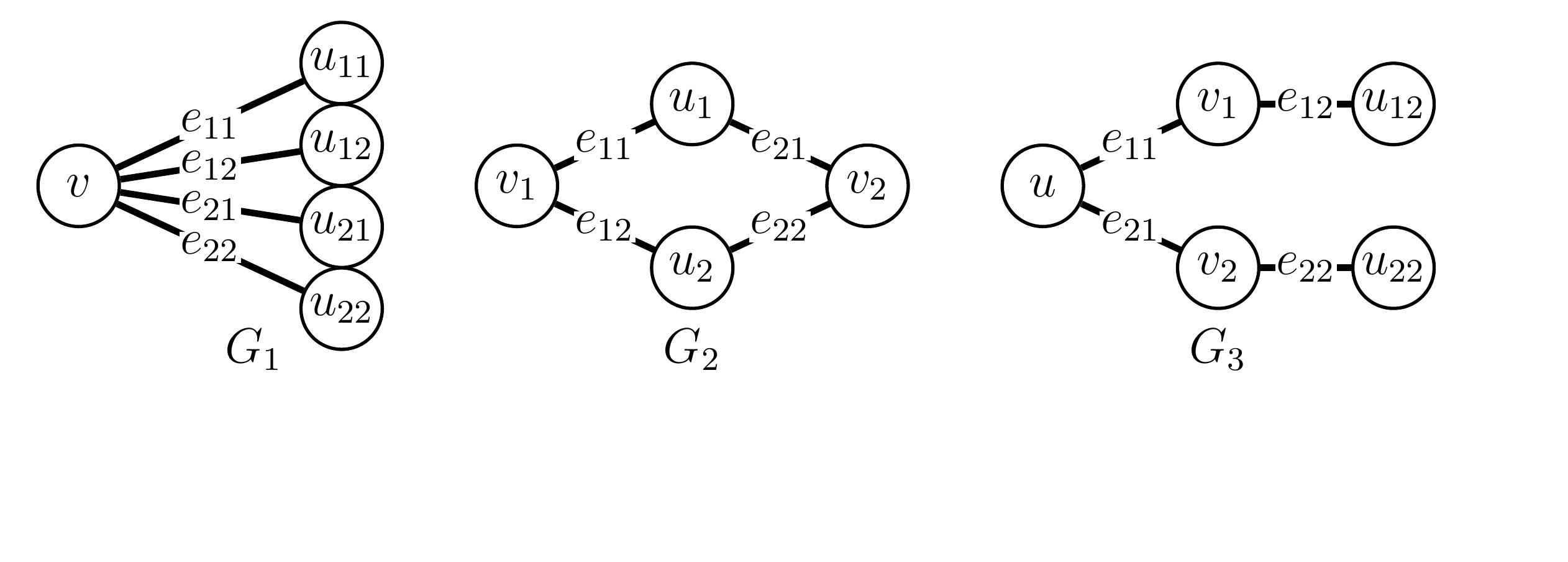}
		\end{center}
	\caption{Graphs $G_i$,   $i=1,  2,  3$. }
	\label{Fig:LeadingGraphs}
\end{figure}
This section is devoted to establishing the following estimates of the last three norms in the bound \eqref{Eq:dkW}.
\begin{lemma}\label{Lem:Integrals1}
	If $mp^3\le 1$ and $m>m_0$ for some $m_0\in\N$, then we have 
	\begin{align*}
		\left\| \bar g_{1}\ast_{1}^0 \bar g_{1}\right\|_2^2
		&\lesssim mp^5e^{-4mp^2} + (mp^3)^2 e^{-4mp^2},\\
		\left\| \bar g_{2}\ast_{1}^1 \bar g_{2}\right\|_2^2
		&\lesssim mp^4e^{-4mp^2}+ (mp^3)^2e^{-4mp^2},\\
		\left\| \bar g_{2}\ast_{1}^1 \bar  g_{1}\right\|_2^2
		&\lesssim mp^5e^{-4mp^2} + (mp^3)^2 e^{-4mp^2}.
	\end{align*}
\end{lemma}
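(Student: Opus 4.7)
The plan is to translate each of the three $L^2$-norms into an expectation of a centred product of non-edge indicators over the edge set of a small auxiliary graph, expand the product as an alternating sum over edge subsets, compute each resulting subgraph non-appearance probability in closed form by inclusion--exclusion over attributes, and finally extract the asserted bound via a Taylor expansion in $p$ in which the leading contributions cancel.

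\smallskip
\noindent\emph{Graphic representation.} Setting $J_e = 1-\mathds{1}_{\{e\in\Gnmp\}}$ for the non-edge indicator and using the identities $\bar g_2(x,y) = -(\rho(x,y)-(1-\hatp))$ and $\bar g_1(x) = -\int(\rho(x,y)-(1-\hatp))\,d\mu_{m,p}(y)$, together with the independence of attribute vectors of distinct vertices, each of the three norms can be rewritten as
\begin{equation*}
\|\,\cdot\,\|_2^2 = \E\Bigl[\prod_{e\in E(G_i)}\bigl(J_e-(1-\hatp)\bigr)\Bigr],
\end{equation*}
where $G_1 = K_{1,4}$ (the $4$-star, $5$ vertices, $4$ edges), $G_2 = K_{2,2}$ (the $4$-cycle), and $G_3 = P_5$ (path on $5$ vertices, $4$ edges), as depicted in Figure~\ref{Fig:LeadingGraphs}. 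The signs $(-1)^{|E(G_i)|}=1$ are absorbed since all three graphs have an even number of edges.

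\smallskip
\noindent\emph{Expansion and inclusion--exclusion.} Expanding the product and taking expectations gives
\begin{equation*}
\sum_{S\subseteq E(G_i)}\bigl(-(1-\hatp)\bigr)^{4-|S|}\,\P\bigl((V(G_i),S)\subseteq\cGnmp\bigr).
\end{equation*}
By Lemma~\ref{Lem:CCequality} the summands depend only on the isomorphism type of $H_S := (V(G_i),S)$, allowing the sum to be organised into a small number of classes (at most six for each $G_i$). A direct inclusion--exclusion in attributes (the technique used to derive \eqref{eq:ee}) then yields
\begin{equation*}
\P(H\subseteq\cGnmp) = (1-q_H(p))^m,\qquad q_H(p) := \sum_{\emptyset\ne T\subseteq E(H)}(-1)^{|T|+1}p^{|V(T)|}.
\end{equation*}

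\smallskip
\noindent\emph{Cancellation via Taylor expansion.} Factoring out $(1-\hatp)^4 \asymp e^{-4mp^2}$, we write
\begin{equation*}
z_H := m\log\!\Bigl(\tfrac{1-q_H(p)}{(1-p^2)^{|E(H)|}}\Bigr) = \alpha_H\, mp^3 + \beta_H\, mp^4 + \gamma_H\, mp^5 + O(mp^6),
\end{equation*}
with explicit integer coefficients depending only on $H$ (for stars $K_{1,k}$ one computes $\alpha=\binom{k}{2}$, $\beta=-\binom{k}{3}-\binom{k}{2}$, $\gamma=\binom{k}{4}+k\binom{k}{2}$, and similarly for $P_4$, $P_3\cup K_2$, $P_5$, $K_{2,2}$). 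Substituting $e^{z_H} = 1+z_H+z_H^2/2+O(z_H^3)$ and collecting powers of $s:=mp^3$, $t:=mp^4$, $u:=mp^5$, the combinatorial identities $\sum_S(-1)^{4-|S|}=0$ and $\sum_S(-1)^{4-|S|}\alpha_{H_S}=0$ eliminate the $O(1)$ and $O(s)$ contributions for each $G_i$. For $G_1$ and $G_3$ the further identity $\sum_S(-1)^{4-|S|}\beta_{H_S}=0$ also holds, killing the linear-in-$t$ term and leaving surviving orders $s^2+u$; for $G_2$ it fails (the alternating sum equals $1$), yielding surviving orders $s^2+t$ instead. Since for $mp^3\le 1$ and $p$ small all further terms (in $s^3$, $t^2$, $st$, etc.) are bounded by $s^2+u$, this produces the three claimed bounds.

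\smallskip
\noindent\emph{Main obstacle.} The chief difficulty lies in the explicit combinatorial verification of these vanishing identities. For $G_2$ and $G_3$ the edge subsets split into several isomorphism classes (e.g.\ a $3$-edge subset of $P_5$ can be either $P_4$ or $P_3\cup K_2$), each contributing a distinct polynomial $q_H$; one must enumerate all classes, compute each $q_H$ through order $p^5$, and check the alternating sums of $\alpha_H,\beta_H,\gamma_H$ in detail. The grouping principles provided by Lemma~\ref{Lem:CCequality} and Corollary~\ref{cor:piFuG}, together with the asymptotics of Lemma~\ref{Lem:CCprobability} where needed, are precisely what make this bookkeeping manageable; without this framework the three $L^2$-norms would be dominated naively by the suboptimal $mp^3\,e^{-4mp^2}$ bound arising from the uncancelled leading term $\pi_2-\mu^2$.
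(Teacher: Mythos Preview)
Your outline is correct and would yield the same bounds, but the route is genuinely different from the paper's. Both proofs start from the same identity
\[
\|\,\cdot\,\|_2^2=\E\Big[\prod_{e\in E(G_i)}(\ind_e-\hatp)\Big],
\]
and then part ways. The paper expands on the $\Gnmp$ side, obtaining $\sum_{\II}(-1)^{|\II|}\pi(H_{i,\II})$, and then refines each $\pi(H_{i,\II})$ via the clique-cover decomposition $\covers_\JJ(H_{i,\II})$, where $\JJ$ records which edges are \emph{not} covered by a $2$--set. The key structural step is that for every $\JJ\subsetneq\indeksy$ the inner alternating sum over $\II$ vanishes identically (Lemma~\ref{Lem:HIJprawdopodobienstwa}, via the isomorphism argument of Corollary~\ref{cor:piFuG}); only $\JJ=\indeksy$ survives, and there the clique covers consist exclusively of $k$--sets with $k\ge 3$, so the estimate~\eqref{eq:CCasymp} gives the bounds directly. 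You instead expand on the $\cGnmp$ side, exploit the closed form $\P(H\subseteq\cGnmp)=(1-q_H(p))^m$, factor out $(1-\hatp)^4$, Taylor-expand $e^{z_H}$, and verify by hand that the alternating sums $\sum_S(-1)^{|S|}\alpha_{H_S}$ and (for $G_1,G_3$) $\sum_S(-1)^{|S|}\beta_{H_S}$ vanish. What the paper's approach buys is that the cancellation is proved once, uniformly, as a consequence of the isomorphism~\eqref{Eq:HIIpromHIIbis}; this is what makes the method extend to counting larger cliques. What your approach buys is that it bypasses the clique-cover machinery of Section~\ref{sec:CCs} entirely, at the cost of explicit case-by-case coefficient checks for each isomorphism class of subgraph of $G_1,G_2,G_3$.

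One small correction: your closing paragraph invokes Lemma~\ref{Lem:CCequality} and Corollary~\ref{cor:piFuG} as ``what make this bookkeeping manageable'', but your argument never actually uses clique covers; those results concern $\pi(H,\cover_+,\cover_-)$, not the non-edge probabilities $(1-q_H)^m$ you work with. Your verification is self-contained once the $q_H$'s are computed, and the references should be dropped.
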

We deal with the three norms above  by relating them to three specific graphs with four edges, which are presented in Figure~\ref{Fig:LeadingGraphs}. Namely, 
let $G_1$, $G_2$, and $G_3$ be a star $K_{1,4}$, a cycle $C_4$ and a path $P_5$, respectively, such that $V(G_i)\subseteq \V$ for $i=1,2,3$. In order to simplify notation, we use the same set $\{e_{11}, e_{12}, e_{21}, e_{22} \}$ of edge labels for each graph. The vertex labels are non-disjoint as well, and the pattern for choosing them follows from the process of associating norms with graphs, which will become clearer later on.

For $i\in \{1,2,3\}$ and $\II\subseteq \indeksy:=\{11,  12,  21,  22\}$, we denote by $G_{i,\II}=G_i[\{e_{ab}:ab\in \II\}]$ the subgraph of $G_i$ induced by edges with indices in~$\II$. Additionally we use convention that $G_{i,\emptyset}$ has neither vertices nor edges and $\pi(G_{i,\emptyset})=1$. Moreover, let $H_{i,\II}$ be a graph on $8$ vertices ($V(H_{i,\II})\subseteq \V$) which consists of $G_{i,\II}$,   isolated edges $e_{ab}$,   $ab\in \indeksy\setminus \II$,   and possibly  some isolated vertices (for examples, see Figure~\ref{Fig:DefHindeksy}). Let us note here that
\begin{equation}\label{Eq:GiHi}
	\pp^{|\indeksy|-|\II|}\pi(G_{i,\II})=\pi(H_{i,\II}),
\end{equation}
which follows from the fact that  the isolated edges $e_{ab}$,   $ab\in \indeksy\setminus \II$,  appear independently of the edges of $G_{i,\II}$, and isolated vertices do not affect the subgraph probability.

	\begin{figure}
		\begin{center}
		\includegraphics[width=400pt]{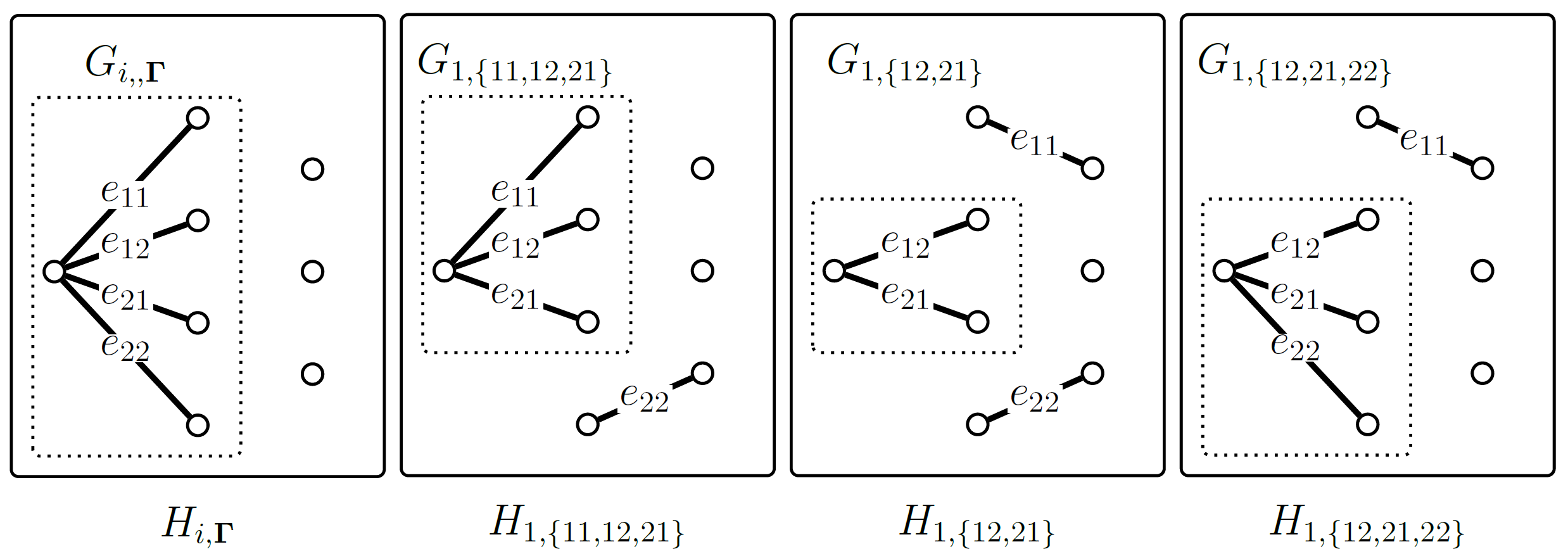}
		\end{center}
	\caption{\label{Fig:DefHindeksy} Examples of $H_{1,\II}$. $G_{i,\II}$ has been placed in the dotted rectangle.}
\end{figure}

In order to show Lemma~\ref{Lem:Integrals1}, first we will prove the following lemma.

\begin{lemma}\label{Lem:CalkiDoPodgrafy} For $i=1,2,3$ 
	\begin{equation}\label{Eq:IntegralsWzorSumaHI}
		\left\| \bar g_{l_{i,1}}\ast_{l_{i,3}}^{l_{i,2}} \bar g_{l_{i,4}}\right\|_2^2=\sum_{\II\subseteq\indeksy}(-1)^{|\II|}\pi(H_{i,\II}), 
	\end{equation}
	where $(l_{1,1},  l_{1,2},  l_{1,3},  l_{1,4})=(1,  0,  1,  1)$,  $(l_{2,1},  l_{2,2},  l_{2,3},  l_{2,4})=(2,  1,  1,  2)$, and  $(l_{3,1},  l_{3,2},  l_{3,3},  l_{3,4})=(2,  1,  1,  1)$.
\end{lemma}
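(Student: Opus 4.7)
The plan is to rewrite each of the three squared norms as an integral of a product of exactly four copies of $\bar g_2$, then use the linearity $\bar g_2 = g - \hatp$ to expand and pass to subgraph probabilities. The fundamental tool is the identity
\begin{equation*}
\bar g_1(x)=\int_{\{0,1\}^m}\bar g_2(x,u)\, d\mu_{m,p}(u),
\end{equation*}
which follows immediately from $g_1(x)=\int g(x,u)\, d\mu_{m,p}(u)$ and allows me to eliminate every $\bar g_1$ in favour of a fresh integration in $\bar g_2$, so all three norms can be brought to a uniform form.

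First I would unfold each norm into a multiple integral on $(\{0,1\}^m)^{|V(G_i)|}$. For $i=2$, expanding $\|\bar g_2\ast_1^1\bar g_2\|_2^2$ directly yields an integral over four independent variables $w_1,w_2,y,z$ of $\bar g_2(w_1,y)\bar g_2(w_1,z)\bar g_2(w_2,y)\bar g_2(w_2,z)$, whose argument pairs form exactly the four edges of $C_4=G_2$. For $i=3$, the analogous unfolding combined with one application of the $\bar g_1$-identity produces five variables $u_1,w_1,y,w_2,u_2$ and the four edges of $P_5=G_3$. For $i=1$, since $\bar g_1\ast_1^0\bar g_1=\bar g_1^{\,2}$, the squared norm equals $\int\bar g_1(x)^4\, d\mu_{m,p}(x)$; applying the $\bar g_1$-identity four times produces five variables $x,u_1,u_2,u_3,u_4$ and the four edges of $K_{1,4}=G_1$. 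Choosing a consistent bijection between the four $\bar g_2$ factors and the label set $\indeksy=\{11,12,21,22\}$, each norm takes the common form
\begin{equation*}
\int_{(\{0,1\}^m)^{|V(G_i)|}}\prod_{ab\in\indeksy}\bar g_2(x_{ab},y_{ab})\, d\mu_{m,p}^{\otimes|V(G_i)|},
\end{equation*}
where the pairs $\{x_{ab},y_{ab}\}$ run over the edges of $G_i$.

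Next I would expand $\bar g_2=g-\hatp$ multilinearly to obtain
\begin{equation*}
\prod_{ab\in\indeksy}\bar g_2(x_{ab},y_{ab})=\sum_{\II\subseteq\indeksy}(-\hatp)^{|\indeksy|-|\II|}\prod_{ab\in\II}g(x_{ab},y_{ab}),
\end{equation*}
and integrate. Because the attribute vectors attached to distinct vertices are independent and $g$ is exactly the edge indicator in $\Gnmp$, each integral $\int\prod_{ab\in\II}g(x_{ab},y_{ab})\, d\mu_{m,p}^{\otimes|V(G_i)|}$ equals $\pi(G_{i,\II})$. Since $|\indeksy|=4$ is even, $(-1)^{|\indeksy|-|\II|}=(-1)^{|\II|}$, and identity \eqref{Eq:GiHi} converts $\hatp^{|\indeksy|-|\II|}\pi(G_{i,\II})$ into $\pi(H_{i,\II})$, producing the claimed formula \eqref{Eq:IntegralsWzorSumaHI}.

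The main obstacle I expect is the bookkeeping in the unfolding step: ensuring that the integration variables line up with vertices of $G_i$ in such a way that the four $\bar g_2$ factors act precisely on the edge pairs of $G_i$, and that the edge labelling by $\indeksy$ is consistent across $i=1,2,3$ so that the same $\II$ picks out the same four-edge selection in each case. Once these identifications are fixed, the multilinear expansion of $\bar g_2$ and a direct appeal to \eqref{Eq:GiHi} reduce the lemma to an inclusion–exclusion identity that holds term by term.
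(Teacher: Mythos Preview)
Your proposal is correct and follows essentially the same approach as the paper: both expand each squared norm into a multiple integral of four $\bar g_2$ factors (using $\bar g_1(x)=\int\bar g_2(x,u)\,d\mu_{m,p}(u)$ where needed), identify the argument pairs with the edges of $G_i$, expand $\bar g_2=g-\hatp$ multilinearly, recognise the resulting integrals as $\pi(G_{i,\II})$, and then apply \eqref{Eq:GiHi}. The paper simply writes out each of the three cases explicitly rather than formulating the common template you describe, but the content is identical.
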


For each $H_{i,\II}$,  $\II\subseteq \indeksy$, $i=1,2,3$, and  $\JJ\subseteq \indeksy$ we define 
\begin{center}
	$\covers_{\JJ}(H_{i,\II})$ -- the family of clique covers of $H_{i,\II}$ such that no edge $e_{ab}$,   $ab\in \JJ$,   is covered by a 2--set and all edges $e_{ab}$,   $ab\in\indeksy\setminus\JJ$ are covered by 2--sets.
\end{center}
Note that $\covers_{\JJ}(H_{i,\II})$, $\JJ\subseteq \indeksy$,  determine a decomposition of the family of clique covers $\covers(H_{i,\II})$. Now we may expand the expression from Lemma~\ref{Lem:CalkiDoPodgrafy} as follows
\begin{equation}\label{Eq42:CalkaDoSumaHIJ}
	\begin{split}
		\sum_{\II\subseteq \indeksy}(-1)^{|\II|}\pi(H_{i,\II})
		&=\sum_{\II\subseteq \indeksy}(-1)^{|\II|}\sum_{\JJ\subseteq\indeksy}\pi(H_{i,\II},  \covers_{\JJ}(H_{i,\II}))\\
		&=\sum_{\emptyset\subseteq\JJ\subseteq\indeksy}\sum_{\II\subseteq \indeksy}(-1)^{|\II|}\pi(H_{i,\II},  \covers_{\JJ}(H_{i,\II})).
	\end{split}
\end{equation}
Next, we will show the the last sum vanishes and derive bounds for the other  sum. Then, Lemma~\ref{Lem:Integrals1} will follow from Lemma~\ref{Lem:CalkiDoPodgrafy}, the formula  \eqref{Eq42:CalkaDoSumaHIJ}, and the lemma below.
\begin{lemma}\label{Lem:HIJprawdopodobienstwa}For $ \emptyset\subseteq\JJ\subsetneq\indeksy$  and $i=1, 2, 3$ we have
$$\sum_{\II\subseteq \indeksy}(-1)^{|\II|}\pi(H_{i,\II},  \covers_{\JJ}(H_{i,\II}))=0.$$ 
Moreover, for $mp^3\le 1$ and $m\to\infty$ it holds that 
			\begin{align*}
		\left|\sum_{\II\subseteq \indeksy}(-1)^{|\II|}\pi(H_{i,\II},  \covers_{\indeksy}(H_{i,\II}))\right|
		&\lesssim
		\begin{cases}
			mp^5e^{-4mp^2} + (mp^3)^2 e^{-4mp^2}&\text{ for }i=1, 3;\\
			mp^4e^{-4mp^2}+ (mp^3)^2e^{-4mp^2}&\text{ for }i=2.
		\end{cases}	
	\end{align*}	
\end{lemma}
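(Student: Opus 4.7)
The plan is to build a sign-reversing involution on subsets $\II \subseteq \indeksy$. Since $\JJ \subsetneq \indeksy$, fix any $ab^* \in \indeksy \setminus \JJ$ and pair each $\II$ with $\II' := \II \triangle \{ab^*\}$. The key observation is that $\covers_\JJ(H_{i,\II})$ coincides with the family $\covers(F_\II, G_\II)$ of Corollary~\ref{cor:piFuG}, where $F_\II$ (resp.\ $G_\II$) is the spanning subgraph of $H_{i,\II}$ carrying the edges indexed by $\JJ$ (resp.\ $\indeksy \setminus \JJ$). Because $\II \cap \JJ = \II' \cap \JJ$, the graphs $F_\II$ and $F_{\II'}$ admit the same decomposition into inherited edges of $G_i$ on $\II \cap \JJ$ and isolated edges on $\JJ \setminus \II$, so $F_\II \cong F_{\II'}$, while trivially $|E(G_\II)| = |E(G_{\II'})| = |\indeksy \setminus \JJ|$. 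Fixing a common vertex set $V(H_{i,\II}) = V(H_{i,\II'})$ in $\V$, Corollary~\ref{cor:piFuG} yields
\[
\pi(H_{i,\II}, \covers_\JJ(H_{i,\II})) = \pi(H_{i,\II'}, \covers_\JJ(H_{i,\II'})),
\]
and since $(-1)^{|\II|} + (-1)^{|\II'|} = 0$, the paired contributions cancel, proving the identity.

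\textbf{Part 2: the bound for $\JJ = \indeksy$.} The involution above no longer applies since there is no $ab^* \in \indeksy \setminus \indeksy$, so I would bound by absolute values: $|S_i| \le \sum_\II \pi(H_{i,\II}, \covers_\indeksy(H_{i,\II}))$ and estimate each summand via Lemma~\ref{Lem:CCprobability}. Write $\pi(H_{i,\II}, \covers_\indeksy(H_{i,\II})) = \sum_{\cover} \pi(H_{i,\II}, \cover, \calP(H_{i,\II}) \setminus \cover)$ summed over $\cover \in \covers_\indeksy(H_{i,\II})$. Since $\cover$ contains no 2-sets and the only 2-sets in $\calP(H_{i,\II})$ are the four edges of $H_{i,\II}$, we have $\cover_+^{(2)} = \emptyset$ and $|\cover_-^{(2)}| = 4$, so Lemma~\ref{Lem:CCprobability} gives
\[
\pi(H_{i,\II}, \cover) \asymp e^{-4mp^2} \prod_{C \in \cover} mp^{|C|}.
\]
Because $mp^3 \le 1$ and every $|C| \ge 3$, each factor $mp^{|C|} \le 1$, so the dominant contributions come from \emph{minimal} covers. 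For $\II = \indeksy$ the minimal covers are of two types: (i) a pair of triangles of $G_i$ each covering two adjacent edges, contributing $(mp^3)^2$; and (ii) the single set $V(G_i)$ covering all four edges, contributing $mp^{|V(G_i)|}$, i.e.\ $mp^4$ for $G_2 = C_4$ and $mp^5$ for $G_1 = K_{1,4}$ and $G_3 = P_5$. For $\II \subsetneq \indeksy$ the isolated edges force covers to use more or larger sets, so their contributions are strictly dominated by the $\II = \indeksy$ case; summing the $2^4$ possibilities produces the claimed estimate.

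\textbf{Main obstacle.} The main technical point is the enumeration in Part 2: for each of the three graphs $G_i$ and for each of the $16$ subsets $\II \subseteq \indeksy$, one must verify that every minimal clique cover of $H_{i,\II}$ using only sets of size $\ge 3$ gives a contribution dominated by $(mp^3)^2 + mp^{|V(G_i)|}$. The count is bounded (at most $8$ vertices per $H_{i,\II}$, only finitely many $\ge 3$-sets in $\calP(H_{i,\II})$), but one must check carefully that no unexpectedly cheap cover arises when some edges are isolated, using the elementary facts that every set in $\cover$ must contain both endpoints of at least one edge of $H_{i,\II}$ and that triangles in each $G_i$ cover at most two edges.
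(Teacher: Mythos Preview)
Your proposal is correct and matches the paper's approach: Part~1 is the same cancellation via Corollary~\ref{cor:piFuG} (your sign-reversing involution on $ab^*\in\indeksy\setminus\JJ$ is a repackaging of the paper's grouping by $\JJ^*=\II\cap\JJ$ followed by the binomial identity $\sum_k(-1)^k\binom{|\indeksy\setminus\JJ|}{k}=0$), and Part~2 is the same triangle-inequality-plus-Lemma~\ref{Lem:CCprobability} argument with $|\cover_-^{(2)}|=4$. The only cosmetic difference is that the paper phrases the structural fact in Part~2 uniformly over all $\II$ (every cover of $H_{i,\II}$ without 2-sets contains a set of size $\ge|V(G_i)|$ or at least two sets of size $\ge3$), which absorbs at a stroke the case-check across $\II\subsetneq\indeksy$ that you flag as the main obstacle.
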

In the remaining part of this section we prove Lemmas~\ref{Lem:CalkiDoPodgrafy} and \ref{Lem:HIJprawdopodobienstwa}.

\begin{proof}[Proof of Lemma~\ref{Lem:CalkiDoPodgrafy}]
	Using the shorthand notation $
	\int f(x) dx:= \int_{\{0,  1\}^m} f(x) d\mu_{m,  p}(x)
	$ we write
	\begin{align*}
		\left\| \bar g_{1}\ast_{1}^0 \bar g_{1}\right\|_2^2&=
		\int\bar g_1^4(x)dx\\
		&=\int\ldots\int(g(x,  z_{11})-\pp)(g(x,  z_{12})-\pp)\\
		&\hspace{3cm}(g(x,  z_{21})-\pp)(g(x,  z_{22})-\pp)dz_{11}dz_{12}dz_{21}dz_{22}dx\\
		&=\int\ldots\int\sum_{\II\subseteq\indeksy}(-1)^{|\II|}\left(\prod_{ab\in\II}g(x,  z_{ab})\right)\pp^{|\indeksy|-|\II|}dz_{11}dz_{12}dz_{21}dz_{22}dx\\
		&=\sum_{\II\subseteq\indeksy}(-1)^{|\II|}\pp^{|\indeksy|-|\II|}\int\ldots\int\left(\prod_{ab\in\II}g(x,  z_{ab})\right)dz_{11}dz_{12}dz_{21}dz_{22}dx\\
		&=\sum_{\II\subseteq\indeksy}(-1)^{|\II|}\pp^{|\indeksy|-|\II|}\pi(G_{1,\II})\\
		&=\sum_{\II\subseteq\indeksy}(-1)^{|\II|}\pi(H_{1,\II}).
	\end{align*}
	In the last but one line, we have associated the attribute set of the vertex $v$ in $G_1$ with the variable $x$, and the attribute sets of $u_{ab}$ in $G_1$ with $z_{ab}$, $ab\in \indeksy$, and observed that for any $ab\in \indeksy$, the set $g(x,z_{ab})=1$ corresponds to the event $e_{ab}\in E(\Gnmp)$. In the last line we used \eqref{Eq:GiHi}.
	
	Similarly we get
	\begin{align*}
		&\left\| \bar g_{2}\ast_{1}^1 \bar  g_{2}\right\|_2^2\\
		&=\int\int\left(\int\bar g_2(x,  y_1)\bar g_2(x,  y_2)dx\right)^2dy_1dy_2\\
		&=\int\ldots\int(g(x_1,  y_1)-\pp)(g(x_1,  y_2)-\pp)(g(x_2,  y_1)-\pp)(g(x_2,  y_2)-\pp)dx_1dx_2dy_1dy_2\\
		&=\int\ldots\int\sum_{\II\subseteq\indeksy}(-1)^{|\II|}\left(\prod_{ab\in\II}g(x_a,  y_b)\right)\pp^{|\indeksy|-|\II|}dx_1dx_2dy_1dy_2\\
		&=\sum_{\II\subseteq\indeksy}\pp^{|\indeksy|-|\II|}(-1)^{|\II|}\int\ldots\int\left(\prod_{ab\in\II}g(x_a,  y_b)\right)dx_1dx_2dy_1dy_2\\
		&=\sum_{\II\subseteq\indeksy}\pp^{|\indeksy|-|\II|}(-1)^{|\II|}\pi(G_{2,\II})\\
		&=\sum_{\II\subseteq\indeksy}(-1)^{|\II|}\pi(H_{2,\II}).
	\end{align*}
	Here we have associated the attribute set of $v_a$ in $G_2$ with the variable $x_a$, $a=1, 2$, and the attribute set of $u_{b}$ in $G_2$ with $y_{b}$, $b=1, 2$.
	
	Following the same approach, we derive
	\begin{align*}
		\left\| \bar g_{2}\ast_{1}^1 \bar g_{1}\right\|_2^2
		&=\int\left(\int\bar g_1(x)\bar g_2(x,  y)dx\right)^2dy
		\\
		&=\int\left(\int\int(g(x,  z)-\pp)(g(x,  y)-\pp)dzdx\right)^2dy
		\\
		&=\int\ldots\int(g(x_1,  z_{12})-\pp)(g(x_1,  y)-\pp)\\
		&\hspace{4cm}(g(x_2,  z_{22})-\pp)(g(x_2,  y)-\pp)
		dz_{12}dx_1
		dz_{22}dx_2
		dy
		\\
		&=\sum_{\II\subseteq\indeksy}(-1)^{|\II|}\pp^{|\indeksy|-|\II|}\pi(G_{3,\II})\\
		&=\sum_{\II\subseteq\indeksy}(-1)^{|\II|}\pi(H_{3,\II}).
	\end{align*}
	The proof is complete.
\end{proof}

\begin{figure}
		\begin{center}
		\includegraphics[width=300pt]{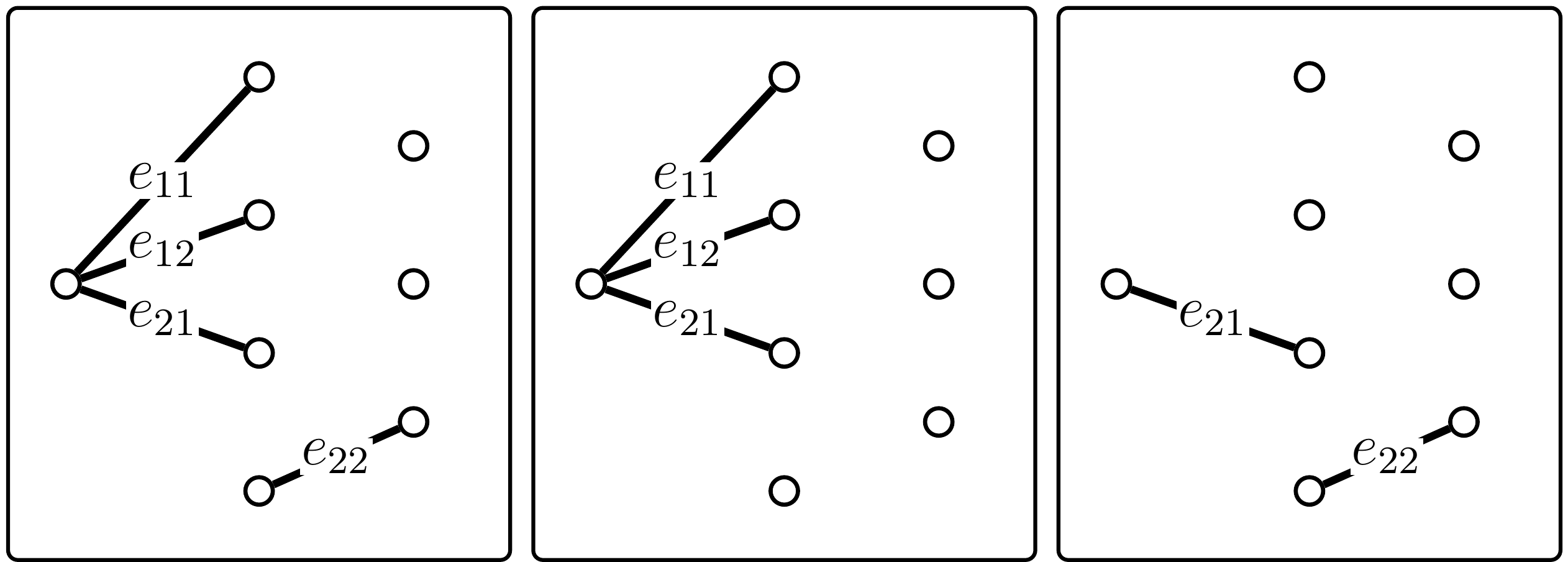}
		\end{center}
	
	\caption{\label{Fig:DefHIJ} Examples of $H_{1,\II,  \JJ}$, where   $\II=\{11,  12,  21\}$   and $\JJ=\indeksy,  \{11,  12,  21\},  \{21,  22\}$.}	
\end{figure}

\begin{proof}[Proof of Lemma~\ref{Lem:HIJprawdopodobienstwa}]
Set $i=1, 2, 3$ and 
 let $\JJ$ such that $\JJ\subsetneq\indeksy$. Then 	
 \begin{align}
			\sum_{\II\subseteq \indeksy}(-1)^{|\II|}\pi(H_{i,\II},  \covers_{\JJ}(H_{i,\II}))&
			=\sum_{\JJ^*\subseteq \JJ}\sum_{\substack{\II\subseteq\indeksy\\  \II\cap \JJ=\JJ^*}}(-1)^{|\II|}\pi(H_{i,\II},  \covers_{\JJ}(H_{i,\II}))\\
			\label{Eq42:SumaHIJdwa}
			&=\sum_{\JJ^*\subseteq \JJ}(-1)^{|\JJ^*|}\sum_{\substack{\II\subseteq\indeksy\\  \II\cap \JJ=\JJ^*}}(-1)^{|\II\setminus \JJ|}\pi(H_{i,\II},  \covers_{\JJ}(H_{i,\II})).
\end{align}
	 Thus, in order to prove  the first equation in Lemma~\ref{Lem:HIJprawdopodobienstwa},  it now suffices to show  that the inner sum in \eqref{Eq42:SumaHIJdwa} always vanishes, i.e.
	\begin{equation}\label{Eq42:SumaHIJgwiazdka}
		\forall_{\JJ\subsetneq\indeksy}\forall_{\JJ^*\subseteq \JJ}\ 
		\sum_{\substack{\II\subseteq\indeksy\\  \II\cap \JJ=\JJ^*}}(-1)^{|\II\setminus \JJ|}\pi(H_{i,\II},  \covers_{\JJ}(H_{i,\II}))=0.
	\end{equation}
For any $\II\subseteq \indeksy$ we define 
	\begin{equation}\label{Eq:DefHIJ}
		H_{i,\II,  \JJ}=H_{i,\II}-\{e_{ab}: ab\notin \JJ\},
	\end{equation}
	which is the subgraph of $H_{i,\II}$ with the vertex set as $H_{i,\II}$ and only edges $e_{ab}$ with $ab\in \JJ$. For examples of $H_{i, \II,  \JJ}$, see Figure~\ref{Fig:DefHIJ}. 
	
	Next, we will prove that for any $\II', \II''\subseteq\indeksy$  such that $\II'\cap \JJ=\II''\cap \JJ = \JJ^*$ we have
	\begin{equation}\label{Eq:HIIpromHIIbis}
		H_{i,\II',  \JJ}\cong H_{i,\II'',  \JJ}.
	\end{equation}
	For better understanding, we present in Figure~\ref{Fig:DefHIJbis} two examples of $H_{i,\II,  \JJ}$ such that $\II\cap \JJ=\JJ^*$. 
	
 By the definition of $H_{i,\II'}$, its subgraph induced on the edges $e_{ab}$,    $ab\in \II'$,   is isomorphic to $G_{i,\II'}$. Consequently, the subgraph of  $H_{i,\II',  \JJ}$, induced on the edges $e_{ab}$, $ab\in \II'\cap \JJ$,   is isomorphic to $G_{i,\II'\cap\JJ}=G_{i,\JJ^*}$, i.e. we have
	\begin{equation*}
		H_{i,\II',  \JJ}[\{e_{ab},  ab\in \JJ\cap \II'\}]\cong G_{i,\JJ\cap \II'}\cong G_{i,\JJ^*}
		\cong G_{i,\JJ\cap \II''} \cong H_{i,\II'',  \JJ}[\{e_{ab},  ab\in \JJ\cap \II''\}].
	\end{equation*}
	Moreover, the edges $e_{ab}$,   $ab\in \JJ\setminus\II'=\JJ\setminus\II''$,   are disjoint from all other edges in both $H_{i,\II',  \JJ}$ and $H_{i,\II'',  \JJ}$. Therefore, both $H_{i,\II',  \JJ}$ and $H_{i,\II'',  \JJ}$ have vertex sets of the same size, and are composed of $G_{i,\JJ^*}$ and $|\JJ\setminus\II'|=|\JJ\setminus\II''|$ disjoint edges. Thus \eqref{Eq:HIIpromHIIbis} follows.

	\begin{figure}
			\begin{center}
		\includegraphics[width=200pt]{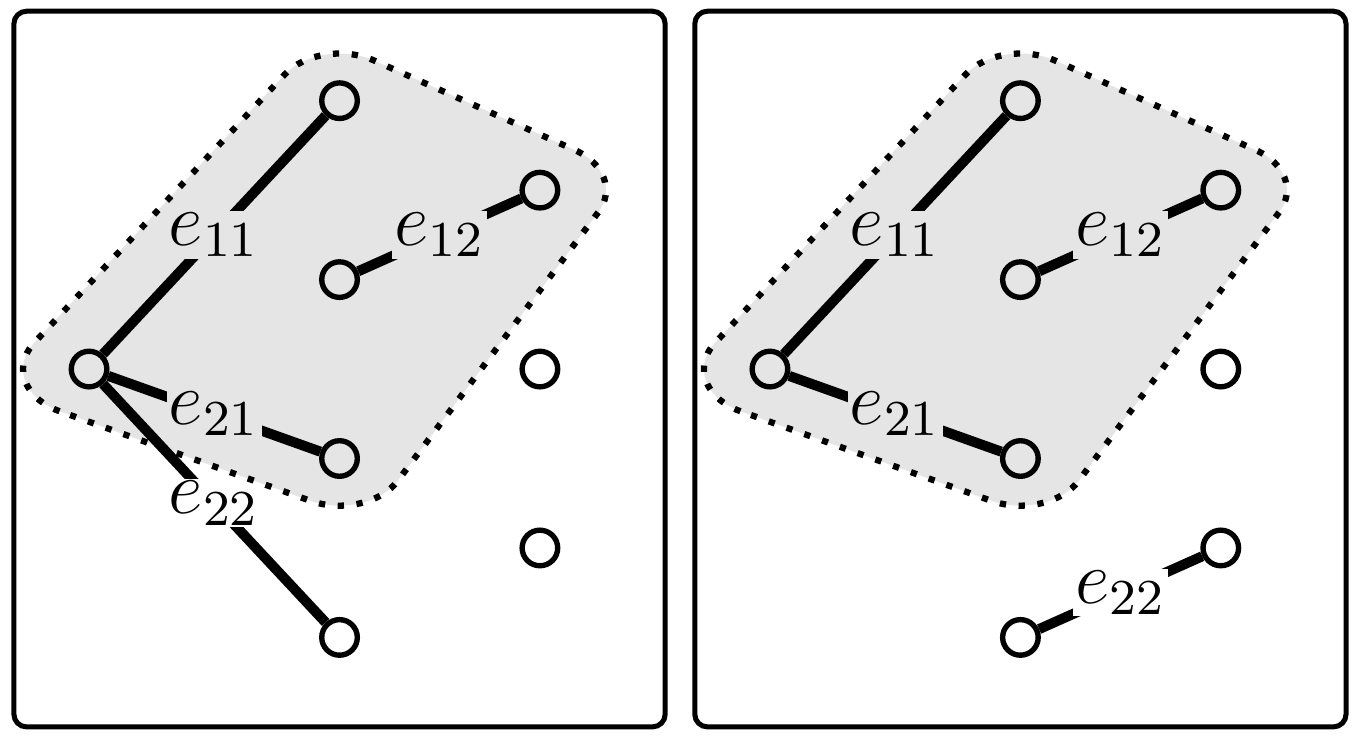}
		\end{center}
		\caption{\label{Fig:DefHIJbis} Let  $\JJ=\{11,  12, 21\}$ and $\JJ^*=\{11,21\}$. In the figure we show all $H_{1,\II}$ such that $\JJ\cap\II=\JJ^*$. By \eqref{Eq:HIIpromHIIbis} all $H_{1,\II, \JJ}$ (with the edge set indicated by the dotted line) are isomorphic. Each $H_{1,\II, \JJ}$ consist of $G_{\JJ^*}$ (built with edges $e_{11}$ and $e_{21}$), an isolated edge $e_{12}$, and three isolated vertices (those outside the dotted line).}
	\end{figure}
	
	The isomorphism \eqref{Eq:HIIpromHIIbis} allows us to apply Corollary \ref{cor:piFuG} with $F=H_{i,\II',  \JJ}$, $F'=H_{i,\II'',  \JJ}$, $G=H_{i,\II',  \indeksy\setminus\JJ}$, $G'=H_{i,\II'',  \indeksy\setminus\JJ}$. 
For this purpose, let us remind that $\covers_{\JJ}(H_{i,\II'})$ consists of all clique covers in which no edge of $F=H_{i,\II',  \JJ}$ is covered by a 2--set, and all edges of $G=H_{i,\II',  \indeksy\setminus\JJ}$ are covered by 2--sets. Therefore, using the notation from Corollary \ref{cor:piFuG} we have $\covers_{\JJ}(H_{i,\II'})=\covers(F,G)$ and, analogously, $\covers_{\JJ}(H_{i,\II''})=\covers(F',G')$. Thus, by Corollary \ref{cor:piFuG}, for any $\II'$ and $\II''$ such that $\II'\cap\II''=\JJ^*$,
	$$
	\pi(H_{i,\II'},  \covers_{\JJ}(H_{i,\II'}))
	=
	\pi(F\cup G,\covers(F,G))
	=
	\pi(F'\cup G',\covers(F',G'))
	=
	\pi(H_{i,\II''},  \covers_{\JJ}(H_{i,\II''}))
	$$
	
	Consequently,  all terms $\pi(H_{i,\II},  \covers_{\JJ}(H_{i,\II}))$ in the sum in \eqref{Eq42:SumaHIJgwiazdka}  have the same value, which gives us
	\begin{align*}
		\sum_{\substack{\II\subseteq\indeksy\\  \II\cap \JJ=\JJ^*}}(-1)^{|\II\setminus \JJ|}\pi(H_{i,\II},  \covers_{\JJ}(H_{i,\II}))
		&=\pi(H_{i,\JJ^*},  \covers_{\JJ}(H_{i,\JJ^*}))\sum_{\substack{\II\subseteq\indeksy\\  \II\cap \JJ=\JJ^*}}(-1)^{|\II\setminus \JJ|}\\
		&=\pi(H_{i,\JJ^*},  \covers_{\JJ}(H_{i,\JJ^*}))\sum_{k=0}^{|\indeksy\setminus \JJ|}(-1)^{k}\binom{|\indeksy\setminus \JJ|}{k}=0.
	\end{align*}
	This proves \eqref{Eq42:SumaHIJgwiazdka}, thereby completing the proof of the first equation in  Lemma~\ref{Lem:HIJprawdopodobienstwa}.

	What has left to prove is the latter assertion in Lemma~\ref{Lem:HIJprawdopodobienstwa}.
	Recall that, for any $\II\subseteq\indeksy$, by the definition of $\covers_{\indeksy}(H_{i,\II})$, it contains all clique covers of $H_{i,\II}$ with no 2--sets. Figure~\ref{Fig:OptimalCCLeadingGraphs} illustrates examples of such clique covers.

	First, consider clique covers from $\covers_{\indeksy}(H_{1,\II})$, $\II\subseteq \indeksy$.  
	Each such a clique cover to cover must contain at least one $k$--sets,   $k\ge 5$,   or at least $2$ sets of size at least~$3$. Moreover, none of the considered clique covers has any 2--set. Therefore, by \eqref{eq:CCasymp} and the assumption $mp^3\leq 1$, we obtain
	\begin{equation*}
		\forall_{\II}\forall_{\cover\in \covers_{\indeksy}(H_{1,\II})}\  \pi(H_{1,\II},  \cover)
		\lesssim (mp^5+(mp^3)^2)e^{-4mp^2}.
	\end{equation*}
	As a result, by \eqref{Eq:SubgraphProbabilityCCfamily} we have
	\begin{equation}\label{Eq:Calka1ograniczeniemp}
		\left|\sum_{\II\subseteq \indeksy}(-1)^{|\II|}\pi(H_{1,\II},  \covers_{\indeksy}(H_{1,\II}))\right|
		\le \sum_{\II\subseteq \indeksy}\pi(H_{i,\II},  \covers_{\indeksy}(H_{1,\II})) 
		\lesssim (mp^5+(mp^3)^2)e^{-4mp^2}.
	\end{equation}
	For $H_{3,\II}$, $\II\subseteq \indeksy$, similarly as for $H_{1,\II}$, all clique covers with no 2--sets either  contain one $k$--sets,   $k\ge 5$,   or at least $2$ sets of size at least $3$.
	Therefore,  by \eqref{eq:CCasymp} and \eqref{Eq:SubgraphProbabilityCCfamily}  we get
	\begin{equation}\label{Eq:Calka2ograniczeniemp}
		\left|\sum_{\II\subseteq \indeksy}(-1)^{|\II|}\pi(H_{3,\II},  \covers_{\indeksy}(H_{3,\II}))\right|\lesssim (mp^5+(mp^3)^2)e^{-4mp^2}.
	\end{equation}
	In the case of $H_{2,\II}$, $\II\subseteq \indeksy$, all the clique covers that do not include 2--sets use either at least one $k$--set,   $k\ge 4$,   or at least two sets with at least 3 vertices.
	Therefore
	
	\begin{align*}
		\forall_{\II}\forall_{\cover\in \covers_{\indeksy}(H_{2,\II})}
		\pi(H_{2,\II},  \covers_{\indeksy}(H_{2,\II}))&\lesssim (mp^4+(mp^3)^2)e^{-4mp^2}. 
	\end{align*}
This leads to
	\begin{equation}\label{Eq:Calka3ograniczeniemp}
		\left|\sum_{\II\subseteq \indeksy}(-1)^{|\II|}\pi(H_{2,\II},  \covers_{\indeksy}(H_{2,\II}))\right|\lesssim (mp^4+(mp^3)^2)e^{-4mp^2},
	\end{equation} 
	which concludes the proof.
	\begin{figure}[H]
	\begin{center}
		\includegraphics[width=400pt]{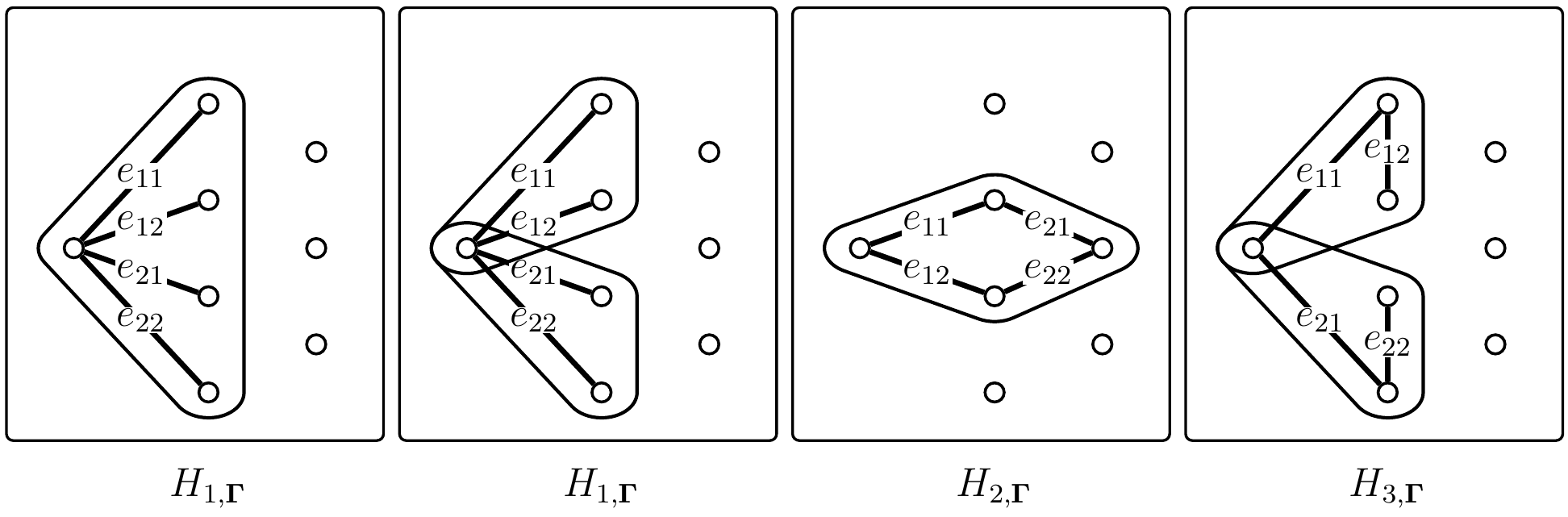}
		\end{center}
		\caption{\label{Fig:OptimalCCLeadingGraphs} Examples of clique covers without 2--sets for $H_{i,\indeksy}$.}
	\end{figure}
	
\end{proof}

	\begin{remark}
		A careful examination  of the last part of the proof of Lemma~\ref{Lem:HIJprawdopodobienstwa} allows one to verify that,  for $mp^3=o(1)$, the estimates can be complemented with the following asymptotics
		\begin{align*}
			\left\| \bar g_{1}\ast_{1}^0 \bar g_{1}\right\|_2^2
			&\sim mp^5e^{-4mp^2} + 3(mp^3)^2 e^{-4mp^2},\\
			\left\| \bar g_{2}\ast_{1}^1 \bar g_{2}\right\|_2^2
			&\sim mp^4e^{-4mp^2}+ 2(mp^3)^2e^{-4mp^2},\\
			\left\| \bar g_{2}\ast_{1}^1 \bar  g_{1}\right\|_2^2
			&\sim mp^5e^{-4mp^2} + (mp^3)^2 e^{-4mp^2}.
		\end{align*}
	\end{remark}

\section{Asymptotic normality for edge count}\label{sec:MainProof}
\subsection{Distance bounds}

As mentioned in Section \ref{sec:mainresults},  the main task in proving the bounds \eqref{eq:mainbound1} and \eqref{eq:mainbound2} is to estimate the norms in \eqref{Eq:dkW}. The first two of them might be estimated  analytically,  as presented below.
\begin{lemma}\label{lem:2norms} For $n,  m\geq3$ we have
	\begin{align}
		\label{Eq:IntegralsArtym1}
		n^2\left\|\overline g_{2}\ast_{2}^0 \overline g_{2}\right\|^2_2&\lesssim(\Var[ N_E])^2\frac1{n^2\hatp(1-\hatp)}, \\
		\label{Eq:IntegralsArytm2}
		n^3\left\| \overline g_{2}\ast_{2}^1  \overline g_{2}\right\|^2_2&\lesssim \(\Var [N_E]\)^2\(\frac1{n^2\hatp(1-\hatp)}+\frac1n\).
	\end{align}
\end{lemma}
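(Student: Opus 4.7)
The plan is to compute both contractions explicitly, exploiting two structural facts: $g$ is $\{0,1\}$-valued, so $g^2=g$ collapses most of the algebra, and, as a direct moment calculation with $|A^{(1)}|\sim \mathcal B(m,p)$ will show, the norm $\|\overline g_1\|_2^2$ equals exactly $(1-2p^2+p^3)^m-(1-\hatp)^2$, which is precisely the covariance $\cov(\ind_{e_1},\ind_{e_2})$ of two adjacent edge indicators. Consequently both contractions will decompose into a ``pure arithmetic'' piece (powers of $\hatp(1-\hatp)$) and a ``correlation'' piece ($\|\overline g_1\|_2^2$); these two pieces can then be matched against the two summands in the decomposition $\Var[N_E]\asymp n^2\hatp(1-\hatp)+n^3\|\overline g_1\|_2^2$ from Lemma~\ref{Lem:Variance}.

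For \eqref{Eq:IntegralsArtym1}, the contraction is simply $\overline g_2\ast_2^0\overline g_2=\overline g_2^{\,2}$, hence $\|\overline g_2\ast_2^0\overline g_2\|_2^2=\|\overline g_2\|_4^4=\E[(\ind_{e_1}-\hatp)^4]=\hatp(1-\hatp)[\hatp^3+(1-\hatp)^3]\le \hatp(1-\hatp)$. Multiplying by $n^2$ and applying the lower bound $\Var[N_E]\gtrsim n^2\hatp(1-\hatp)$ from Lemma~\ref{Lem:Variance} yields the target inequality.

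For \eqref{Eq:IntegralsArytm2}, the first step is to use $g^2=g$ to simplify $\overline g_2^{\,2}(w,x)=(1-2\hatp)g(w,x)+\hatp^2$, which, after integrating in $w$, gives $(\overline g_2\ast_2^1\overline g_2)(x)=\hatp(1-\hatp)+(1-2\hatp)\overline g_1(x)$. Squaring, integrating in $x$, and using $\E[\overline g_1]=0$ produces the clean split
\[
\|\overline g_2\ast_2^1\overline g_2\|_2^2=\hatp^2(1-\hatp)^2+(1-2\hatp)^2\|\overline g_1\|_2^2\le \hatp^2(1-\hatp)^2+\|\overline g_1\|_2^2.
\]
For the first summand, $n^3\hatp^2(1-\hatp)^2=(n^2\hatp(1-\hatp))^2/n\lesssim (\Var[N_E])^2/n$. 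For the second, the identity $\|\overline g_1\|_2^2=\cov(\ind_{e_1},\ind_{e_2})$ combined with \eqref{eq:vareq} gives $n^3\|\overline g_1\|_2^2\lesssim \Var[N_E]$, and then $\Var[N_E]\gtrsim n^2\hatp(1-\hatp)$ upgrades this to $n^3\|\overline g_1\|_2^2\lesssim (\Var[N_E])^2/(n^2\hatp(1-\hatp))$.

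The main—but still modest—obstacle will be the bookkeeping in \eqref{Eq:IntegralsArytm2}: one must correctly pair the ``arithmetic'' summand $\hatp^2(1-\hatp)^2$ with the $1/n$ term in the target bound (via the $n^2\hatp(1-\hatp)$ lower bound on $\Var[N_E]$) and the ``correlation'' summand $\|\overline g_1\|_2^2$ with the $1/(n^2\hatp(1-\hatp))$ term (via the covariance contribution to $\Var[N_E]$). Once this pairing is made explicit and the identity $\|\overline g_1\|_2^2=\cov(\ind_{e_1},\ind_{e_2})$ is recognised, the whole lemma becomes an arithmetic consequence of Lemma~\ref{Lem:Variance} and the $\{0,1\}$-valuedness of $g$.
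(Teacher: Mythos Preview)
Your proposal is correct and follows essentially the same route as the paper: use $g^2=g$ to reduce $\overline g_2^{\,2}$ to an affine expression in $g$, identify the resulting integral pieces with $\Var[\ind_{e_1}]=\hatp(1-\hatp)$ and $\|\overline g_1\|_2^2=\cov(\ind_{e_1},\ind_{e_2})$, and then compare against the decomposition of $\Var[N_E]$ from Lemma~\ref{Lem:Variance}. Your use of the orthogonality $\E[\overline g_1]=0$ to get the exact identity $\|\overline g_2\ast_2^1\overline g_2\|_2^2=\hatp^2(1-\hatp)^2+(1-2\hatp)^2\|\overline g_1\|_2^2$ is in fact slightly cleaner than the paper, which instead applies the cruder bound $(a+b)^2\le 2a^2+2b^2$ at that step.
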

\begin{proof}
	In view of \eqref{eq:varest} we get
	\begin{equation}
		\begin{split}
			\left\|\overline g_{2}\ast_{2}^0 \overline g_{2}\right\|^2_2
			&=\int_{\{0,  1\}^m}\int_{\{0,  1\}^m}
			[g(x,  y)-\hatp]^{4}
			d\mu_{m,  p}(x)d\mu_{m,  p}(y)\\
			&=\E\[\(\ind_e-\p\)^4\]=\hatp(1-\hatp)^{4}+(1-\hatp)\hatp^{4}\asymp  \hatp(1-\hatp)\\
			&=\frac{(n^2\hatp(1-\hatp))^2}{n^2}\frac1{n^2\hatp(1-\hatp)}\\
			&\lesssim\frac{(\Var[ N_E])^2}{n^2}\frac1{n^2\hatp(1-\hatp)},  
		\end{split}
	\end{equation}
	where $\ind_e$ denotes the indicator function for the existence of a fixed edge $e$.
	Furthermore,   by \eqref{Eq:Var01} and using $g^2=g$  we obtain
	\begin{equation}
		\begin{split}
			\left\| \overline g_{2}\ast_{2}^1  \overline g_{2}\right\|^2_2
			&= \int_{\{0,  1\}^m}\left[\int_{\{0,  1\}^m}[g(x,  y)-\hatp ]^2d\mu_{m,  p}(x)\right]^2d\mu_{m,  p}(y)\\
			&= \int_{\{0,  1\}^m}\left[\int_{\{0,  1\}^m}(g(x,  y)-\hatp)(1-2\hatp)-\hatp(\hatp-1)\mu_{m,  p}(x)\right]^2d\mu_{m,  p}(y)\\
			&\le 2 \int_{\{0,  1\}^m}\left[\int_{\{0,  1\}^m}(g(x,  y)-\hatp)d\mu_{m,  p}(x)\right]^2d\mu_{m,  p}(y)+2[\hatp(\hatp-1)]^2\\[3pt]
			&= 2\,  \cov (\ind_{e_1},  \ind_{e_2})+2\(\Var[\ind_{e_1}]\)^2\\[3pt]
			&\lesssim\frac{\Var [N_E]}{n^3}+ \frac1{n^4}\(\Var [N_E]\)^2\lesssim \frac{\(\Var [N_E]\)^2}{n^3}\(\frac1{n^2\hatp(1-\hatp)}+\frac1n\), 
		\end{split}
	\end{equation}
	where $e_1$ and $e_2$ are two edges sharing a common vertex. This ends the proof.
\end{proof}
Next,  we  focus on the other three norms in \eqref{Eq:dkW} and consider two cases: when $mp^3$ is smaller or greater than $1$.

\begin{lemma}\label{lem:mp^3<1}
	For $mp^3\le 1$  and $n,m$ large enough we have
	\begin{align*}
		d_{K/W}\left(\widetilde N_E,   \mathcal N\right)&\lesssim	\left(\dfrac1{n^2\hatp(1-\hatp)}\right)^{1/4}, 
	\end{align*}
	as well as
	\begin{align*}
		d_{K/W}\left(\widetilde N_E,   \mathcal N\right)
		&\lesssim	\left(\dfrac1{n^2\hatp(1-\hatp)}+\dfrac1n+\dfrac1{m}\right)^{1/2}.
	\end{align*}
\end{lemma}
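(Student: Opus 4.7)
The plan is to start from the general bound \eqref{Eq:dkW} and show that each of the five norms on the right-hand side, after being divided by $(\Var[N_E])^2$, fits under the target expressions. Set $A := n^2\hatp(1-\hatp)$ and $B := n^3 mp^3(1-\hatp)^2$, so that \eqref{eq:var-mp3small} yields $\Var[N_E] \asymp A + B$, and consequently $(\Var[N_E])^2 \gtrsim \max\{A^2,\, B^2,\, AB\}$. Combining this with the identification $e^{-4mp^2} \asymp (1-\hatp)^4$, I can convert the bounds of Lemma \ref{Lem:Integrals1} into ratios to $(\Var[N_E])^2$ by choosing the denominator strategically for each norm.

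For the second (sharper) bound, the first two norms are handled directly by Lemma \ref{lem:2norms} and produce the contributions $\frac{1}{n^2\hatp(1-\hatp)}$ and $\frac{1}{n}$. For the last three norms, I would treat the $(mp^3)^2$ parts in Lemma \ref{Lem:Integrals1} using $(\Var[N_E])^2 \gtrsim B^2$, which yields $\frac{1}{n}$ or $\frac{1}{n^2}$. The $mp^k$ parts ($k=4,5$) are better handled with $(\Var[N_E])^2 \gtrsim AB$; after cancellation this leaves expressions like $\frac{p^2(1-\hatp)}{\hatp}$ and $\frac{mp^4(1-\hatp)^2}{\hatp^2}$. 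I would split into two subcases: $mp^2 \le 1$, where $\hatp \asymp mp^2$ and the cancellation is direct; and $mp^2 \ge 1$, where $\hatp \asymp 1$ and the residual factor $(1-\hatp)^s \le e^{-smp^2}$ must be tamed via the polynomial decay $e^{-x} \le k!/x^k$. In both subcases one arrives at $\lesssim 1/m$, and summing then taking the square root delivers the second bound.

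For the first bound I would avoid the $AB$-based estimates (whose $\frac{1}{m}$ residue is not dominated by $(1/[n^2\hatp(1-\hatp)])^{1/2}$) and instead apply $(\Var[N_E])^2 \gtrsim B^2$ uniformly to all three of the last norms, producing $\frac{1}{nmp}$ and $\frac{1}{n^2mp^2}$ in place of $\frac{1}{m}$. The crucial observation is the tight inequality $\hatp \le mp^2$ (immediate from Bernoulli's inequality), giving $\sqrt{\hatp(1-\hatp)} \le p\sqrt{m}$ and hence $\frac{1}{nmp} \le \frac{1}{np\sqrt{m}} \le \bigl(\frac{1}{n^2\hatp(1-\hatp)}\bigr)^{1/2}$, while $\frac{1}{n^2mp^2} \le \frac{1}{n^2\hatp(1-\hatp)} \le \bigl(\frac{1}{n^2\hatp(1-\hatp)}\bigr)^{1/2}$ (valid once $n^2\hatp(1-\hatp) \ge 1$). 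Since also $\frac{1}{n} \le \bigl(\frac{1}{n^2\hatp(1-\hatp)}\bigr)^{1/2}$ from $\hatp(1-\hatp) \le 1$, every contribution under the square root is bounded by $\bigl(\frac{1}{n^2\hatp(1-\hatp)}\bigr)^{1/2}$, and taking the root yields the first bound. The regime $n^2\hatp(1-\hatp) < 1$ is absorbed by the trivial $d_{K/W} \le 2$. The main obstacle will be the case analysis showing $\frac{p^2(1-\hatp)}{\hatp}, \frac{mp^4(1-\hatp)^2}{\hatp^2} \lesssim \frac{1}{m}$ for $mp^2 \ge 1$, where the exponential decay of $(1-\hatp)$ must compensate a polynomial factor; this delicate interplay is precisely what justifies the sharper exponent $1/2$ in the second bound, while for the first bound the replacement of $AB$-denominators by $B^2$-denominators is the decisive trick.
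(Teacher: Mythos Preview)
Your proposal is correct and follows the same overall strategy as the paper: feed the norm bounds from Lemma~\ref{Lem:Integrals1} and Lemma~\ref{lem:2norms} into \eqref{Eq:dkW}, then compare with $(\Var[N_E])^2$ via \eqref{eq:var-mp3small}. One small slip: the expression $\frac{mp^4(1-\hatp)^2}{\hatp^2}$ arises from dividing $n^4mp^4e^{-4mp^2}$ by $A^2$, not $AB$; dividing by $AB$ would give $\frac{p(1-\hatp)}{n\hatp}\asymp\frac{1}{nmp}$ when $mp^2\le1$, which is not $\lesssim\frac1m$ in general. Your stated expression is the right one and your case split $mp^2\lessgtr 1$ does handle it, so this only affects the labelling.

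The organization genuinely differs from the paper. The paper first combines everything into a single intermediate bound
\[
d_{K/W}\lesssim\left(\frac{1}{n^2\hatp(1-\hatp)}+\frac1n+\left(\frac{1}{nmp}\wedge \frac{mp^2(1-\hatp)}{m\hatp}\right)\right)^{1/2},
\]
obtained by factoring the full $\Var[N_E]$ out of the $n^5mp^5$ term, and then specializes: the $\frac{1}{nmp}$ side of the minimum gives the $1/4$-bound exactly as you do, while the other side is controlled via $mp^2\le\ln\frac{1}{1-\hatp}$ together with $\sup_{x\in(0,1)}\frac{1-x}{x}\ln\frac{1}{1-x}<\infty$, avoiding any case split. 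Your route instead picks the denominator ($A^2$, $B^2$, or $AB$) term by term for each target bound separately, and replaces the paper's $\sup$ trick by the elementary observation that $(mp^2)^ke^{-kmp^2}$ is bounded. Both approaches are short and equivalent in strength; the paper's is slightly slicker in that a single intermediate estimate serves both conclusions, while yours is more transparent about which piece of the variance is doing the work at each step.
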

\begin{proof}
	By Lemma~\ref{Lem:Integrals1},  the estimate \eqref{eq:var-mp3small} and the formula \eqref{eq:hatp},  which implies $1-\hatp=(1-p^2)^m\asymp e^{-mp^2}$,   we get
	\begin{align}\nonumber
		n^4\left\| \overline g_{2}\ast_{1}^{1} \overline g_{2}\right\|_2^2
		&\lesssim n^4mp^4e^{-4mp^2} + n^4(mp^3)^2 e^{-4mp^2}\\[5pt]\nonumber
		&=\(\frac1{n^2mp^2}+\frac1{n^2}\){\big(n^3mp^3e^{-2mp^2}\big)^2}\\[5pt]
		\label{aux11}
		&\lesssim\frac1{n^2\hatp (1-\hatp)}{(\Var[N_E])^2}.
	\end{align}
	Similarly,  
		\begin{align*}
			&n^5\left\| \overline g_{1}\ast_{1}^{0} \overline g_{1}\right\|_2^2+n^5\left\| \overline g_{2}\ast_{1}^{1} \overline g_{1}\right\|_2^2
			\\[5pt]
			&\lesssim n^5mp^5e^{-4mp^2} + n^5(mp^3)^2 e^{-4mp^2}\\[5pt]
				&=\frac1m{(n^3mp^3e^{-2mp^2})(n^2mp^2 e^{-2mp^2})}+\frac1n{\big(n^3mp^3e^{-2mp^2}\big)^2}\\[5pt]
&\asymp\frac{\Var [N_E]}m(n^3mp^3e^{-2mp^2}){\frac{n^2mp^2 e^{-2mp^2}}{n^3mp^3(1-\hatp)^2+n^2\hatp(1-\hatp)}}+\frac1n{\big(n^3mp^3e^{-2mp^2}\big)^2}\\[5pt]				
				&\lesssim (\Var [N_E])^2\(\frac1m\frac{mp^2(1-\hatp)}{nmp^3(1-\hatp)+\hatp}+\frac1n\)\\
			&\asymp (\Var [N_E])^2\(\frac1m\left(\frac{1}{np}\wedge \frac{mp^2(1-\hatp)}{\hatp}\right)+\frac1n\).
	\end{align*}
	Applying this and  \eqref{Eq:IntegralsArtym1},   \eqref{Eq:IntegralsArytm2},  \eqref{aux11} to \eqref{Eq:dkW},   we arrive at 
	\begin{align}\label{eq:best}
		d_{K/W}\left(\widetilde N_E,   \mathcal N\right)
		&\lesssim	\left(\dfrac1{n^2\hatp(1-\hatp)}+\frac1n
		+
		\left(\frac{1}{nmp}\wedge \frac1m \frac{mp^2(1-\hatp)}{\hatp}\right)
		\right)^{1/2}.
	\end{align}
	On the one hand,  we may bound the last minimum by
	\begin{align*}
		\frac{1}{nmp}=\frac{1}{\sqrt m \sqrt{n^2mp^2}}\lesssim \frac{1}{\sqrt m \sqrt{n^2\hatp}}\le \frac{1}{ \sqrt{n^2\hatp(1-\hatp)}}, 
	\end{align*}
	and consequently,  in view of \eqref{eq:d<2},  we obtain
	\begin{align*}
		d_{K/W}\left(\widetilde N_E,   \mathcal N\right)
		&\lesssim	\left(\dfrac1{n^2\hatp(1-\hatp)}+\frac{1}{ \sqrt{n^2\hatp(1-\hatp)}}\right)^{1/2}\wedge 1\lesssim \left(\dfrac1{n^2\hatp(1-\hatp)}\right)^{1/4}, 
	\end{align*}
	which is the first assertion of  the lemma.
	
	On the other hand,  applying the inequality 
	$$mp^2\le -m\ln(1-p^2)=\ln\(\tfrac1{1-\hatp}\)$$
	to \eqref{eq:best},  we get 
	\begin{align*}
		d_{K/W}\left(\widetilde N_E,   \mathcal N\right)
		&\lesssim	\left(\dfrac1{n^2\hatp(1-\hatp)}+\frac1n+\frac1m\ln\(\frac1{1-\hatp}\)\frac{(1-\hatp)}{\hatp}\right)^{1/2}.
	\end{align*}
	Since $\sup_{x\in(0, 1)}\ln(\tfrac1{1-x})\tfrac{1-x}x<\infty$,  the proof is complete.
\end{proof}

Next,  we turn our attention to large values of $mp^3$. We approach the problem in this case by considering the number of the non-existing edges,  as it was described in Section \ref{sec:preliminaries}. In particular,  this helps us to determine the precise values of some norms that  appear in the distance bounds.

Let us recall that  $\cGnmp$ denotes  the complement of $\Gnmp$,  and by the definition we have $\varrho=1-g$.  Furthermore,  let  $K_{1,  4}$ be the star with $4$ leaves on some fixed vertices from $\mathcal V$.

\begin{lemma}\label{lem:normsrho} For $n\geq 5$ we have
	\begin{align}\label{eq:K14}
		\left\| \varrho_{1}\ast_{1}^0 \varrho_{1}\right\|_{2}^2&=\P\(K_{1, 4}\subseteq \cGnmp\)=\big(1-p+p(1-p)^4\big)^m, 
		\\[7pt]
		\left\| \varrho_{2}\ast_{1}^1 \varrho_{2}\right\|_{2}^2
		&\label{eq:C4}=\big((1-p)^4+4p(1-p)^3+2p^2(1-p)^2\big)^m, 
		\\[7pt]
		\left\| \varrho_{2}\ast_{1}^1 \varrho_{1}\right\|_{2}^2
		\label{eq:P5}
		&=\big((1-p)^5+5p(1-p)^4+6p^2(1-p)^3+p^3(1-p)^2\big)^m, \\[7pt]
		\label{eq:P3}
		\|\varrho_1\|_2^2&
		=\(1-2p^2+p^3\)^m.
	\end{align}
\end{lemma}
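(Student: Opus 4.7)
My plan is to exploit the product structure $\varrho(x,y)=\prod_{i=1}^m(1-x_iy_i)$, which holds because $\varrho(x,y)=1$ exactly when $x$ and $y$ share no coordinate equal to~$1$. Integrating a single argument with respect to $\mu_{m,p}$ immediately gives $\varrho_1(x)=\prod_{i=1}^m(1-px_i)$, and so \eqref{eq:P3} follows from a one-line computation:
\begin{equation*}
\|\varrho_1\|_2^2=\E\prod_{i=1}^m(1-pX_i)^2=\bigl((1-p)+p(1-p)^2\bigr)^m=(1-2p^2+p^3)^m.
\end{equation*}
For the remaining three norms, my next step is to unfold the contractions $\ast_b^a$ so that each squared $L^2$-norm is expressed as the expectation of a product of four factors of $\varrho$ evaluated on independent copies of an attribute vector $A^{(1)}$. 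A direct calculation from the definition of $\ast_b^a$ yields
\begin{align*}
\|\varrho_1\ast_1^0\varrho_1\|_2^2&=\E\bigl[\varrho_1(X)^4\bigr]=\E\Bigl[\prod_{j=1}^4\varrho(X,Y^{(j)})\Bigr],\\
\|\varrho_2\ast_1^1\varrho_2\|_2^2&=\E\bigl[\varrho(W,Y)\varrho(W',Y)\varrho(W,Z)\varrho(W',Z)\bigr],\\
\|\varrho_2\ast_1^1\varrho_1\|_2^2&=\E\bigl[\varrho(W,Y)\varrho(W',Y)\varrho(W,Y_1)\varrho(W',Y_2)\bigr].
\end{align*}

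Since $\varrho$ is the indicator of an edge being absent in $\cGnmp$, each of these expectations is precisely the probability that a prescribed four-edge subgraph is contained in $\cGnmp$: the star $K_{1,4}$ with centre $X$ and leaves $Y^{(1)},\ldots,Y^{(4)}$; a four-cycle on $W,Y,W',Z$; and the five-vertex path $Y_1$--$W$--$Y$--$W'$--$Y_2$. This identifies the first equality in \eqref{eq:K14} and gives analogous graph-theoretic interpretations for \eqref{eq:C4} and \eqref{eq:P5}.

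For the explicit exponential formulas, I would use that the attributes are independent, so $\P(H\subseteq\cGnmp)=q_H^m$, where $q_H$ is the probability that a single attribute $a$ builds no edge of $H$. Since the random set $S_a\subseteq V(H)$ of vertices choosing $a$ includes each vertex independently with probability $p$, and $a$ builds an edge precisely when $S_a$ contains both of its endpoints, the attribute $a$ builds no edge of $H$ exactly when $S_a$ is an independent set in $H$. Hence
\begin{equation*}
q_H=\sum_{I\in\mathcal I(H)}p^{|I|}(1-p)^{|V(H)|-|I|},
\end{equation*}
where $\mathcal I(H)$ denotes the family of independent sets of $H$.

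The remaining work is a short enumeration of $\mathcal I(H)$. For $K_{1,4}$ the independent sets are either subsets of the four leaves (with total contribution $(1-p)$) or the singleton $\{\text{centre}\}$ (contribution $p(1-p)^4$), yielding $q_{K_{1,4}}=(1-p)+p(1-p)^4$. For $C_4$ the independent sets are the empty set, the four singletons, and the two diagonal pairs, giving $q_{C_4}=(1-p)^4+4p(1-p)^3+2p^2(1-p)^2$. For $P_5$ on vertices $1,\ldots,5$ one checks that $\mathcal I(P_5)$ consists of the empty set, the five singletons, the six non-adjacent pairs, and the unique triple $\{1,3,5\}$, giving $q_{P_5}=(1-p)^5+5p(1-p)^4+6p^2(1-p)^3+p^3(1-p)^2$. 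The only mild obstacle is the bookkeeping when unfolding the contractions---identifying which $U$-statistic variable plays the role of which vertex in the resulting graph; once that is done, everything else is routine.
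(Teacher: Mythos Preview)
Your proof is correct and follows essentially the same route as the paper: both identify each squared norm with the probability that a specific four-edge graph ($K_{1,4}$, $C_4$, $P_5$, and $P_3$ for $\|\varrho_1\|_2^2$) is contained in $\cGnmp$, and then compute that probability attribute by attribute. Your explicit product formula $\varrho(x,y)=\prod_i(1-x_iy_i)$ and the unified independent-set expression $q_H=\sum_{I\in\mathcal I(H)}p^{|I|}(1-p)^{|V(H)|-|I|}$ package the per-attribute case analysis a bit more cleanly than the paper's ad hoc enumeration, but the content is the same.
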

\begin{proof}
We will interpret the norms analogously as it was presented in Section \ref{sec:GraphicRepr}  for the \mbox{kernel $g$.}  First,  note that we have the isomorphisms $G_1\cong K_{1, 4}$,  $G_2\cong C_4$,  and $G_3\cong P_5$ (c.f. Figure \ref{Fig:LeadingGraphs}),  where $C_4$ is a cycle on $4$ vertices and $P_5$ is a path on $5$ vertices.  Therefore,  conducting the same calculations as in the proof of Lemma~\ref{Lem:CalkiDoPodgrafy},  we have
	\begin{align*}
		\left\| \varrho_{1}\ast_{1}^0 \varrho_{1}\right\|_{2}^2=\P\(K_{1, 4}\subseteq \cGnmp\)=\big(1-p+p(1-p)^4\big)^m.
	\end{align*}
	The latter equality follows from the fact that the event $K_{1, 4}\subseteq \cGnmp$ occurs if and only if for any attribute $a\in \mathcal A$  and for any two  vertices  adjacent  in $K_{1, 4}$ it does not happen that both of them chose $a$. It is equivalent to the event that either the centre did not choose $a$ or  it did and none of the leaves did. 
	
	Similarly we get
	\begin{align*}
		\left\| \varrho_{2}\ast_{1}^1 \varrho_{2}\right\|_{2}^2
		&=\P\(C_4\subseteq \cGnmp\)
		=\big((1-p)^4+4p(1-p)^3+2p^2(1-p)^2\big)^m, 
\intertext{
	where the last expression comes from consideration of cases regarding number of vertices associated to an attribute. In the same manner
	we obtain}
		\left\| \varrho_{2}\ast_{1}^1 \varrho_{1}\right\|_{2}^2
		&=\P\(P_5\subseteq \cGnmp\)\\
		&=\big((1-p)^5+5p(1-p)^4+6p^2(1-p)^3+p^3(1-p)^2\big)^m.
	\end{align*}
	Eventually
	\begin{align*}
		\|\varrho_1\|_2^2&=\int_{(\{0, 1\}^m)^3}\varrho(x_1,  x_2)\varrho(x_1, x_3)d\mu_{m, p}^{\otimes 3}(x)=\P\(P_3\subseteq \cGnmp\)=\(1-2p^2+p^3\)^m, 
	\end{align*}
	which ends the proof.
\end{proof}

We are now prepared to  bound the distance $d_{K/W}(\widetilde N_E,   \mathcal{N})$  for large $mp^3$.
\begin{proposition}\label{prop:K14} For $n\geq5$ and $mp^3\geq1$ we have
	\begin{align}\label{eq:K14++}
		d_{K/W}(\widetilde N_E ,   \mathcal{N})
		&\lesssim \(\frac1{n^2\hatp(1-\hatp)}+\frac1n+\frac{n^{5}\, {\P\(K_{1, 4}\subseteq \cGnmp\)}}{(\Var [N_E])^2}\)^{1/2}.
	\end{align}
\end{proposition}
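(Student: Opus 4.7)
The plan is to start from the general bound \eqref{Eq:dkW} for $d_{K/W}(\widetilde N_E,\mathcal N)$, dispose of the first two norms via Lemma~\ref{lem:2norms} (which yields exactly the $\tfrac{1}{n^2\hatp(1-\hatp)}+\tfrac{1}{n}$ contribution to \eqref{eq:K14++}), and bound each of the three remaining norms $\|\bar g_1\ast_1^0\bar g_1\|_2^2$, $\|\bar g_2\ast_1^1\bar g_2\|_2^2$, $\|\bar g_2\ast_1^1\bar g_1\|_2^2$ by a constant multiple of $\P(K_{1,4}\subseteq\cGnmp)$. After multiplication by the prefactors $n^5$, $n^4\le n^5$, and $n^5$, these three contributions together yield precisely the last summand of \eqref{eq:K14++} once divided by $(\Var[N_E])^2$.

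The starting observation for the three remaining norms is the identity $g+\varrho=1$, which entails $g_j+\varrho_j=1$ and $\bar g_j=-\bar\varrho_j$ for $j=1,2$; after squaring each norm is invariant under the substitution $g\leftrightarrow\varrho$. I will then write $\bar\varrho_j=\varrho_j-(1-\hatp)$ and expand each contraction to obtain
\begin{align*}
\bar\varrho_1\ast_1^0\bar\varrho_1(x) &= (\varrho_1(x)-(1-\hatp))^2,\\
\bar\varrho_2\ast_1^1\bar\varrho_1(y) &= \varrho_2\ast_1^1\varrho_1(y) - (1-\hatp)\varrho_1(y),\\
\bar\varrho_2\ast_1^1\bar\varrho_2(x,y) &= \varrho_2\ast_1^1\varrho_2(x,y) - (1-\hatp)\varrho_1(x) - (1-\hatp)\varrho_1(y) + (1-\hatp)^2.
\end{align*}
Applying Minkowski's inequality in $L^2$ then yields
\begin{align*}
\|\bar\varrho_1\ast_1^0\bar\varrho_1\|_2^2 &\lesssim \|\varrho_1\ast_1^0\varrho_1\|_2^2 + (1-\hatp)^4,\\
\|\bar\varrho_2\ast_1^1\bar\varrho_1\|_2^2 &\lesssim \|\varrho_2\ast_1^1\varrho_1\|_2^2 + (1-\hatp)^2\|\varrho_1\|_2^2,\\
\|\bar\varrho_2\ast_1^1\bar\varrho_2\|_2^2 &\lesssim \|\varrho_2\ast_1^1\varrho_2\|_2^2 + (1-\hatp)^2\|\varrho_1\|_2^2 + (1-\hatp)^4,
\end{align*}
and by Lemma~\ref{lem:normsrho} the three leading uncentred norms equal exactly $\P(K_{1,4}\subseteq\cGnmp)$, $\P(P_5\subseteq\cGnmp)$, and $\P(C_4\subseteq\cGnmp)$, respectively.

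It remains to absorb the correction and subleading terms into $\P(K_{1,4}\subseteq\cGnmp)$. For the constant-factor corrections, Jensen's inequality applied to the convex function $x\mapsto x^4$ gives $(1-\hatp)^4=(\E\varrho_1)^4\le\E[\varrho_1^4]=\P(K_{1,4}\subseteq\cGnmp)$, and two applications of Cauchy--Schwarz give $(1-\hatp)^2\|\varrho_1\|_2^2=(\E\varrho_1)^2\E[\varrho_1^2]\le(\E[\varrho_1^2])^2\le\E[\varrho_1^4]=\P(K_{1,4}\subseteq\cGnmp)$. For the cycle/path comparison, the explicit formulas from Lemma~\ref{lem:normsrho} reduce the task to the pointwise polynomial inequalities $p^3(2-p)(1-p)\ge 0$ and $p^3(3-2p)(1-p)\ge 0$ on $[0,1]$, which raised to the $m$-th power yield $\P(C_4\subseteq\cGnmp),\,\P(P_5\subseteq\cGnmp)\le \P(K_{1,4}\subseteq\cGnmp)$. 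Plugging everything back into \eqref{Eq:dkW} together with Lemma~\ref{lem:2norms} delivers \eqref{eq:K14++}. I expect the main obstacle of this plan to be the careful algebraic expansion of the centred contractions in the middle paragraph in such a way that the resulting uncentred integrals coincide with the star/cycle/path probabilities tabulated in Lemma~\ref{lem:normsrho}; every other step reduces either to a one-line application of Jensen/Cauchy--Schwarz or to a trivial polynomial inequality on $[0,1]$.
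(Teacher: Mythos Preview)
Your proof is correct. The route differs from the paper's in a few respects worth noting.

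The paper does not stay with \eqref{Eq:dkW} but returns to Theorem~\ref{thm:generaldb} applied to the kernel $\varrho$, then invokes Lemma~\ref{lem:db<} to pass from the doubly-centred $\db\varrho_j$ to the uncentred $\varrho_j$; this is how the extra term $n^5(1-\hatp)^2\|\varrho_1\|_2^2$ enters their argument. You instead expand $\bar\varrho_j=\varrho_j-(1-\hatp)$ by hand and apply Minkowski, which is more direct and avoids the detour through Theorem~\ref{thm:generaldb} and Lemma~\ref{lem:db<}. The paper then handles the $C_4$ and the $(1-\hatp)^2\|\varrho_1\|_2^2$ contributions differently: rather than bounding them by $\P(K_{1,4}\subseteq\cGnmp)$, it uses the variance lower bound \eqref{eq:varest} to absorb $n^4\P(C_4\subseteq\cGnmp)/(\Var[N_E])^2$ into $1/(n^2(1-\hatp))$ and $n^5(1-\hatp)^2\|\varrho_1\|_2^2/(\Var[N_E])^2$ into $1/n$; this is where the hypothesis $mp^3\ge1$ is actually used (to get $\hatp$ bounded away from $0$, so that $1/(n^2(1-\hatp))\lesssim 1/(n^2\hatp(1-\hatp))$). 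Your argument, by contrast, funnels all subleading terms into $\P(K_{1,4}\subseteq\cGnmp)$ via Jensen/Cauchy--Schwarz and the polynomial identities $\P(K_{1,4})-\P(C_4)=p^3(1-p)(2-p)$ and $\P(K_{1,4})-\P(P_5)=p^3(1-p)(3-2p)$ per attribute, and never uses $mp^3\ge1$ at all. So your proof is in fact valid under the weaker hypothesis $n\ge5$, while the paper's gains slightly finer information by showing that two of the five contributions are already controlled by the first two summands of \eqref{eq:K14++}.
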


\begin{proof} 
	The first step is to derive a version of \eqref{Eq:dkW},  where the last three norms are expressed by means of the kernel $\varrho$ instead $\overline g$. One can do this by thorough estimation,  however,  it is more convenient to go back to Theorem \ref{thm:generaldb}. Namely,  by \eqref{eq:dN=dV} we have
	\begin{align*}
		d_{K/W}(\widetilde N_E ,   \mathcal{N})
		&\le \frac{C'}{\Var [N_E]} 
		\Bigg(
		{n^2\left\|\db \varrho_{2}\ast_{2}^0 \db \varrho_{2}\right\|_{2}^2}
		+{n^{3}\left\|\db \varrho_{2}\ast_{2}^1 \db \varrho_{2}\right\|_{2}^2}\\
		&\phantom{\le \frac{C'}{(\Var [N_E])^2} \Bigg(}\ \ +{n^{5}\left\| \db \varrho_{1}\ast_{1}^0 \db \varrho_{1}\right\|_{2}^2}
		+
		{n^4\left\| \db \varrho_{2}\ast_{1}^1 \db \varrho_{2}\right\|_{2}^2}
		+
		{n^{5}\left\| \db \varrho_{2}\ast_{1}^1 \db \varrho_{1}\right\|_{2}^2}
		\Bigg)^{1/2}.
	\end{align*}
	Next,  we apply Lemma \ref{lem:db<} directly to  the last three norms,  and identifying $\db \varrho_2=\db{-\overline g_2}$ in the other ones.  This gives us
	\begin{align}\nonumber
		&d_{K/W}(\widetilde N_E ,   \mathcal{N})\\\nonumber
		&\lesssim \frac{1}{\Var [N_E]} 
		\Bigg(
		{n^2\left\|\overline g_{2}\ast_{2}^0 \overline g_{2}\right\|_{2}^2}
		+{n^{3}\left\|\overline  g_{2}\ast_{2}^1 \overline  g_{2}\right\|_{2}^2}+
		{n^{5}\left\| \varrho_{1}\ast_{1}^0 \varrho_{1}\right\|_{2}^2}
		+
		{n^4\left\| \varrho_{2}\ast_{1}^1 \varrho_{2}\right\|_{2}^2}\\\nonumber
		&\phantom{\le \frac{C'}{(\Var [N_E])^2} \Bigg(}\ \ +
		{n^{5}\left\| \varrho_{2}\ast_{1}^1 \varrho_{1}\right\|_{2}^2}+n^5 \|\varrho_{1}\|^2_2\bigg(\int_{(\{0,  1\}^m)^r}\varrho(x)d\mu_{m,  p}^{\otimes 2}(x)\bigg)^2\, 
		\Bigg)^{1/2}\\
		\nonumber
		&\lesssim \Bigg[\frac1{n^2\hatp(1-\hatp)}+\frac1n+\frac{1}{(\Var [N_E])^2} \times\\\nonumber
		&\ \ \ \ \ \ \times\Bigg({n^{5}\left\| \varrho_{1}\ast_{1}^0 \varrho_{1}\right\|_{2}^2}
		+
		{n^4\left\| \varrho_{2}\ast_{1}^1 \varrho_{2}\right\|_{2}^2}
		+
		{n^{5}\left\| \varrho_{2}\ast_{1}^1 \varrho_{1}\right\|_{2}^2}+n^5(1-\hatp)^2\|\varrho_1\|_2^2
		\Bigg)\Bigg]^{1/2}, 
	\end{align}
	where we used Lemma \ref{lem:2norms} to obtain the latter inequality. Since $1-\hatp= (1-p^2)^m\le e^{-mp^2}\le e^{-1}$ for $mp^3\geq1$ and due to \eqref{eq:K14},   
	it now suffices  to show the following inequalities  
	\begin{align*}
		{\left\| \varrho_{2}\ast_{1}^1 \varrho_{1}\right\|_{2}^2}&\le 
		\left\| \varrho_{1}\ast_{1}^0 \varrho_{1}\right\|_{2}^2, \\
		\frac{n^5(1-\hatp)^2\|\varrho_1\|_2^2}{(\Var [N_E])^2}&\lesssim \frac1{n}, \\
		\frac{n^4\left\| \varrho_{2}\ast_{1}^1 \varrho_{2}\right\|_{2}^2}{(\Var [N_E])^2}&\lesssim \frac1{n^2(1-\hatp)}.
	\end{align*}
	The inequality $ {\left\| \varrho_{2}\ast_{1}^1 \varrho_{1}\right\|_{2}^2}\le 
	\left\| \varrho_{1}\ast_{1}^0 \varrho_{1}\right\|_{2}^2$ comes directly from \eqref{eq:K14} and \eqref{eq:P5},  since
	\begin{align*}
		1-p+p(1-p)^4&=(1-p)[(1-p)+p]^4+p(1-p)^4\\
		&=(1-p)^5+5p(1-p)^4+6p^2(1-p)^3+4p^3(1-p)^2+p^4(1-p), 
	\end{align*}
	which might be also obtained probabilistically by considering cases where a given attribute is chosen by  different number of vertices in $K_{1, 4}$ without being chosen by both of the vertices of any edge. Furthermore,  by \eqref{eq:C4},  \eqref{eq:P3} and \eqref{eq:varest}  we get
	\begin{align*}
		\frac{n^5(1-\hatp)^2\|\varrho_1\|_2^2}{(\Var [N_E])^2}&\lesssim \frac{n^5(1-p^2)^{2m}(1-2p^2+p^3)^m}{\big(n^3(1-2p^2+p^3)^m\big)^2}=\frac1{n}\(\frac{1-2p^2+p^4}{1-2p^2+p^3}\)^m\le \frac1{n}, 
	\end{align*}
	as well as
	\begin{align*}
		\frac{n^4\left\| \varrho_{2}\ast_{1}^1 \varrho_{2}\right\|_{2}^2}{(\Var [N_E])^2}&\lesssim \frac{n^4\big((1-p)^4+4p(1-p)^3+2p^2(1-p)^2)\big)^m}{\big(n^3(1-2p^2+p^3)^m\big)^2}\\
		&=\frac1{n^2(1-\hatp)} \(1-\Big(\frac{p}{1+p-p^2}\Big)^2\, \)^m\le \frac1{n^2(1-\hatp)}, 
	\end{align*}
	which ends the proof.
\end{proof}
For $p$ close to one the last term in \eqref{eq:K14++} is comparable to $\E[\widetilde N_E^4]$ and blows up. In general,  boundedness of the fourth moment is not required for asymptotic normality,  however,  one can observe its convergence to $\E[\mathcal N^4]=3$ in many contexts,  related to random graphs or not \cite{BDM,  MNS,  dJ,  NP}.   On the other hand,  for $p$ close to zero one can effectively bound the right-hand side of \eqref{eq:K14++}  in various ways. We propose the following one,  which correspond to the necessary condition for the asymptotic normality of $\widetilde N_E$.
\begin{cor}\label{cor:mp^3>1} For $mp^3\geq 1$ and $p\le 0.1$ we have
	\begin{align*}
		d_{K/W}(\widetilde N_E ,   \mathcal{N})&\lesssim\frac{1}{\big({n^2(1-\hatp)}\big)^{1/4}}.
	\end{align*}
\end{cor}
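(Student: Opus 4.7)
The plan is to apply Proposition~\ref{prop:K14} and to show that each of the three summands under the square root is at most $C/\sqrt{n^{2}(1-\hatp)}$; the claim then follows by extracting the square root. When $n^{2}(1-\hatp)\le 1$ the target bound $(n^{2}(1-\hatp))^{-1/4}$ already exceeds one, so the trivial estimate $d_{K/W}\le 2$ suffices; in the sequel assume $n^{2}(1-\hatp)\ge 1$. First observe that $mp^{3}\ge 1$ and $p\le 0.1$ force $mp^{2}\ge 1/p\ge 10$, so that $1-\hatp=(1-p^{2})^{m}\le e^{-10}$ and $\hatp\asymp 1$.

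Given $\hatp\asymp 1$, the first two summands are handled immediately: $1/(n^{2}\hatp(1-\hatp))\lesssim 1/(n^{2}(1-\hatp))\le 1/\sqrt{n^{2}(1-\hatp)}$ using $n^{2}(1-\hatp)\ge 1$, and $1/n\le 1/\sqrt{n^{2}(1-\hatp)}$ using $1-\hatp\le 1$. For the third summand I would combine the lower bound $\Var[N_{E}]\gtrsim n^{3}(1-2p^{2}+p^{3})^{m}$ supplied by Lemma~\ref{Lem:Variance} (with $1\wedge(mp^{3})=1$) and the explicit formula $\P(K_{1,4}\subseteq\cGnmp)=(1-p+p(1-p)^{4})^{m}$ from~\eqref{eq:K14}, to get
\[
\frac{n^{5}\,\P(K_{1,4}\subseteq\cGnmp)}{(\Var[N_{E}])^{2}}\lesssim \frac{1}{n}\left(\frac{1-p+p(1-p)^{4}}{(1-2p^{2}+p^{3})^{2}}\right)^{m}.
\]
This is bounded by $C/(n\sqrt{1-\hatp})=C/\sqrt{n^{2}(1-\hatp)}$ as soon as the scalar inequality
\begin{equation}\label{eq:polykey}
(1-2p^{2}+p^{3})^{2}\ \ge\ (1-p+p(1-p)^{4})\,(1-p^{2})^{1/2}\qquad\text{for } p\in[0,0.1]
\end{equation}
holds, since then raising \eqref{eq:polykey} to the $m$-th power gives $\bigl((1-p+p(1-p)^{4})/(1-2p^{2}+p^{3})^{2}\bigr)^{m}\le (1-p^{2})^{-m/2}=(1-\hatp)^{-1/2}$.

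The main obstacle is the polynomial comparison~\eqref{eq:polykey}. My plan for proving it is to expand the difference of the two sides as a Taylor series in $p$, using the binomial series $(1-p^{2})^{1/2}=1-\tfrac{1}{2}p^{2}-\tfrac{1}{8}p^{4}-\cdots$. A direct calculation shows that
\[
(1-2p^{2}+p^{3})^{2}-(1-p+p(1-p)^{4})(1-p^{2})^{1/2}=p^{2}\Bigl(\tfrac{1}{2}-4p+\tfrac{49}{8}p^{2}-2p^{3}-\tfrac{23}{16}p^{4}+O(p^{5})\Bigr),
\]
and the bracketed expression is bounded below by $\tfrac{1}{2}-4p\ge 0.1$ on $[0,0.1]$ (the quadratic term $\tfrac{49}{8}p^{2}$ is positive, while the cubic and higher corrections contribute at most $O(10^{-3})$ on this range). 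A more pedestrian alternative is to square both sides of~\eqref{eq:polykey} and verify the resulting degree-twelve polynomial inequality on $[0,0.1]$ by direct expansion. Once~\eqref{eq:polykey} is in hand, collecting the three bounds under the square root and extracting the square root yields the corollary.
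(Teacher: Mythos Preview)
Your proof is correct and follows essentially the same route as the paper: apply Proposition~\ref{prop:K14}, use the variance lower bound $\Var[N_E]\gtrsim n^{3}(1-2p^{2}+p^{3})^{m}$ from Lemma~\ref{Lem:Variance}, reduce everything to the scalar inequality $(1-p+p(1-p)^{4})/(1-2p^{2}+p^{3})^{2}\le(1-p^{2})^{-1/2}$ on $[0,0.1]$, and finish via the trivial bound~\eqref{eq:d<2} when $n^{2}(1-\hatp)\le1$.

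The only difference lies in how this scalar inequality is verified. The paper first rewrites the quotient as
\[
\frac{1-p+p(1-p)^{4}}{(1-2p^{2}+p^{3})^{2}}=1+4p^{3}-p^{4}\,\frac{8-13p-4p^{2}+12p^{3}-4p^{4}}{(1-p)(1+p-p^{2})^{2}}\le 1+4p^{3}\qquad(p<\tfrac12),
\]
and then checks that $(1+4p^{3})^{2}(1-p^{2})\le1$ on $[0,0.1]$ via a single explicit polynomial. Your power-series expansion of the difference is also valid (your coefficients through $p^{6}$ are correct), but the $O(p^{5})$ tail coming from the binomial series $(1-p^{2})^{1/2}$ is not a finite polynomial, so the argument as written is slightly informal; your squaring alternative, or the paper's route through $1+4p^{3}$, makes this step fully rigorous with less bookkeeping.
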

\begin{proof}  From Proposition \ref{prop:K14},  Lemma \ref{lem:normsrho} and estimate \eqref{eq:varest} we deduce for $mp^3\geq 1$
	\begin{align}\label{aux8}
		d_{K/W}(\widetilde N_E ,   \mathcal{N})
		\lesssim \[\frac1{n^2\hatp(1-\hatp)}+\frac1n+\frac1n\(\frac{1-p+p(1-p)^4}{(1-2p^2+p^3)^2}\)^m\]^{1/2}.
	\end{align}
	Thus,  by virtue of \eqref{eq:d<2} and the fact that $\hatp$ is bounded away from zero for $mp^3\geq1$,  it suffices to show that for $p\le0.1$
	$$Q(m,  p):=\(\frac{1-p+p(1-p)^4}{(1-2p^2+p^3)^2}\)^m\le\frac{1}{{(1-\hatp)}^{1/2}}. $$ 
	First,  we rewrite 
	\begin{align*}
		Q(m,  p)=\(1+4p^3-p^4\, \frac{8 - 13p - 4 p^2 + 12 p^3 - 4 p^4}{(1 - p) (1 + p - p^2)^2}\)^m.
	\end{align*}
	One can see that for $p<1/2$ the last ratio is positive and hence
	\begin{align*}
		Q(m,  p)&\le \(1+4p^3\)^m, \ \ \ \ \ p\in(0, \tfrac12).
	\end{align*}
	Furthermore,  it holds that
	\begin{align*}
		(1+4p^3)\sqrt{1-p^2}=\sqrt{1 - p^2\(1 - 8 p + 8 p^3 - 16 p^4 + 16 p^6\)},  
	\end{align*}
	which is clearly less than one for $p\le 0.1$ (numerical verification shows that it stays true for $p\le 0.126521\ldots$). This gives us
	\begin{align*}
		Q(m,  p)&\le \frac1{\sqrt{1-p^2}^{\, m}}=\frac1{\sqrt{1-\hatp}}, \ \ \ \ \ p\le0.1, 
	\end{align*}
	as required.
\end{proof}

\subsection{Asymptotic normality conditions}

Eventually,   we turn our attention to the  necessary and sufficient conditions for asymptotic normality of $\widetilde N_E=(N_E-\E[N_E])/\sqrt{\Var[N_E]}$.
Let us recall that   whenever the Kolmogorov or Wasserstein  distance  between random variables $X_n$ and a random variable $X$ tends to zero  then $X_n$ converges to $X$ in distribution. Thus,   in view of  Lemma \ref{lem:mp^3<1},  Corollary \ref{cor:mp^3>1}   and the assumptions $n,  m\rightarrow\infty$,   the condition  $n^2\hatp(1-\hatp)\rightarrow\infty$  ensures $\widetilde N_E\stackrel{\mathcal D}{\longrightarrow}\mathcal N$ whenever $p\leq0.1$. It turns out that the opposite implication holds true without any additional conditions on $p$.

\begin{lemma}\label{lem:necessary}
	If $\widetilde N_E\stackrel{\mathcal D}{\longrightarrow}\mathcal N$,  then $n^2\hatp(1-\hatp)\rightarrow\infty$.
\end{lemma}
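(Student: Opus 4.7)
The plan is to exploit the integer-valued nature of $N_E$ and argue by contradiction. Suppose $n^2\hatp(1-\hatp)\not\to\infty$; along some subsequence it is then bounded by a constant $M$, and along a further subsequence $\hatp\to \hatp_\infty$. Since $n\to\infty$ forces $\hatp(1-\hatp)\to 0$, necessarily $\hatp_\infty\in\{0,1\}$. In the case $\hatp_\infty=1$, one would switch to the complementary count $V_E=\binom{n}{2}-N_E$, which is still integer-valued and, by \eqref{eq:dN=dV} combined with the symmetry of the standard normal, satisfies $\widetilde V_E\stackrel{\mathcal D}{\longrightarrow}\mathcal N$; the role of $\hatp$ is then played by $1-\hatp\to 0$. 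Hence, without loss of generality, we may assume $\hatp\to 0$ along the subsequence.

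Next I would bound the expectation: writing $\E[N_E]=\binom{n}{2}\hatp$, we have for $n$ large (so that $1-\hatp\ge 1/2$),
\begin{equation*}
\E[N_E]\le \frac{n^2\hatp}{2}=\frac{n^2\hatp(1-\hatp)}{2(1-\hatp)}\le M.
\end{equation*}
Being integer-valued with mean at most $M$, Markov's inequality gives $\P(N_E\le 2M)\ge 1/2$, and by the pigeonhole principle there exists $k_n\in\{0,1,\ldots,\lfloor 2M\rfloor\}$ with
\begin{equation*}
\P(N_E=k_n)\ge \frac{1}{2(\lfloor 2M\rfloor+1)}=:c_0>0.
\end{equation*}
Consequently, the distribution of $\widetilde N_E$ carries an atom of size at least $c_0$ along the subsequence.

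On the other hand, the assumed convergence $\widetilde N_E\stackrel{\mathcal D}{\longrightarrow}\mathcal N$ to a law with continuous cdf $\Phi$ upgrades, by P\'olya's theorem, to uniform convergence $\sup_t|F_n(t)-\Phi(t)|\to 0$, where $F_n$ is the cdf of $\widetilde N_E$. Combined with uniform continuity of $\Phi$, a standard three-term splitting shows that the largest jump $\sup_t\bigl(F_n(t)-F_n(t^-)\bigr)$ of $F_n$ tends to zero. But this supremum equals $\max_k\P(N_E=k)$, which is bounded below by $c_0$, a contradiction.

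The main obstacle is really only the $N_E\leftrightarrow V_E$ reduction dispatching the case $\hatp\to 1$; once we are in the regime $\hatp\to 0$, the argument is a clean combination of Markov's inequality (producing a non-vanishing atom of $\widetilde N_E$) with P\'olya's theorem (forcing atoms to vanish under convergence to a continuous law). No information about the variance of $N_E$ is actually needed — the obstruction is purely that $N_E$ lives on the integer lattice.
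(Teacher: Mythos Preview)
Your proof is correct, but the paper takes a slightly different route. Both arguments use the $N_E\leftrightarrow V_E$ symmetry to reduce to a single case, and both ultimately rest on the fact that $N_E$ is integer-valued. However, the paper does not run a direct atom-versus-P\'olya contradiction. Instead it first observes that $\Var[N_E]\to\infty$ (quoting discreteness), and then uses the \emph{support} of the normal distribution: from $\widetilde N_E\ge -\E[N_E]/\sqrt{\Var[N_E]}$ and the fact that $\mathcal N$ is unbounded below, it concludes $\E[N_E]/\sqrt{\Var[N_E]}\to\infty$, hence $\E[N_E]\asymp n^2\hatp\to\infty$; the same applied to $V_E$ gives $n^2(1-\hatp)\to\infty$, and the two combine to $n^2\hatp(1-\hatp)\to\infty$. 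Your argument is more self-contained in that it bypasses any statement about the variance (the Markov-plus-pigeonhole step manufactures the atom from a bounded mean alone), while the paper's version is a touch shorter once one grants the variance step, and it shows slightly more, namely that both $n^2\hatp$ and $n^2(1-\hatp)$ individually diverge.
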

\begin{proof}
	We adapt the argument from \cite[proof of Theorem 3.5]{MNS}. First of all,   $\Var[N_E]$ tends to infinity or else the limit would not be a continuous random variable.  
	Since
	$$\widetilde N_E=\frac{ N_E-\E[N_E]}{\sqrt{\Var[N_E]}}\geq -\frac{ \E[N_E]}{\sqrt{\Var[N_E]}},  $$
	and  the normal distribution is supported on the whole real line,   the last expression has to tend to minus infinity. Thus,   due to $\Var[N_E]\rightarrow\infty$,   we get $\E[N_E]={n\choose 2}\hatp\asymp n^2\hatp\rightarrow \infty$.\\
	On the other hand,   denoting $V_E={n\choose 2}-N_E$ we obtain  $\widetilde V_E=-\widetilde N_E\rightarrow \mathcal N$ and consequently
	$$\widetilde V_E=\frac{ V_E-\E[V_E]}{\sqrt{\Var[V_E]}}\geq -\frac{ \E[V_E]}{\sqrt{\Var[N_E]}}.$$
	Repeating the previous argument,  we conclude $\E[V_E]={n\choose 2}(1-\hatp)\asymp n^2(1-\hatp)\rightarrow \infty$. Eventually,  the observation $\hatp\wedge (1-\hatp)\asymp \hatp (1-\hatp)$ completes the proof.
\end{proof}
\begin{remark}
Theorem \ref{thm:Main} follows now from   Lemmas  \ref{lem:mp^3<1} and \ref{lem:necessary} Corollary \ref{cor:mp^3>1}.
\end{remark}
At the end of this section we present a threshold-like phenomenon for asymptotic normality,  that gives us some understanding what happens  when $p$ is bounded away from zero and from one,  without restricting it to be less than $0.1$.
\begin{proposition}\label{prop:mlnn}
	Let $p=p(n)\in\(\varepsilon,  1-\varepsilon\)$ for some fixed $\varepsilon\in(0, \tfrac12)$. 
	\begin{enumerate}
		\item If $\frac{m}{\ln n}\rightarrow0$,  then  $\widetilde N_E\stackrel{\mathcal D}{\rightarrow}\mathcal N$.
		\item If $\frac{m}{\ln n}\rightarrow\infty$,  then  $\widetilde N_E\stackrel{\mathcal D}{\not\rightarrow}\mathcal N$.
	\end{enumerate}
\end{proposition}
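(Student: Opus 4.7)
The plan is to show the graph is so dense that $N_E$ collapses to a deterministic value. Using $p\geq\varepsilon$, we have $\E[V_E]=\binom{n}{2}(1-p^2)^m\leq\binom{n}{2}(1-\varepsilon^2)^m=\binom{n}{2}e^{-c_0m}$ with $c_0:=-\ln(1-\varepsilon^2)>0$. Since $m/\ln n\to\infty$, the exponential decay beats the polynomial growth and $\E[V_E]\to 0$, so Markov's inequality yields $V_E=0$ with high probability. On this event $\widetilde V_E$ equals the deterministic quantity $a_n:=-\E[V_E]/\sqrt{\Var[V_E]}$. Since $\hatp\to 1$, Lemma~\ref{Lem:Variance} gives $\Var[V_E]=\Var[N_E]\gtrsim n^2\hatp(1-\hatp)\asymp \E[V_E]$, so $|a_n|\lesssim\sqrt{\E[V_E]}\to 0$. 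Hence $\widetilde N_E=-\widetilde V_E\to 0$ in probability, which rules out convergence to the non-degenerate $\mathcal N$.

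\textbf{Part (1).} Since $m\to\infty$ and $p>\varepsilon$, one has $mp^3\to\infty$, and Proposition~\ref{prop:K14} applies:
\begin{align*}
d_{K/W}(\widetilde N_E,\mathcal N)\lesssim\left(\frac{1}{n^2\hatp(1-\hatp)}+\frac{1}{n}+\frac{n^5\,\P(K_{1,4}\subseteq\cGnmp)}{(\Var[N_E])^2}\right)^{1/2}.
\end{align*}
The strategy is to show each summand tends to zero when $m=o(\ln n)$ and $p\in(\varepsilon,1-\varepsilon)$. The middle term is immediate. For the first, the fact that $-\ln(1-p^2)$ is bounded on $[\varepsilon,1-\varepsilon]$ gives $(1-p^2)^m=e^{-\Theta(m)}=n^{-o(1)}$, so $n^2\hatp(1-\hatp)\asymp n^2(1-p^2)^m=n^{2-o(1)}\to\infty$.

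The third summand is the main point. Lemma~\ref{lem:normsrho} evaluates $\P(K_{1,4}\subseteq\cGnmp)=(1-p+p(1-p)^4)^m$, and Lemma~\ref{Lem:Variance} gives $\Var[N_E]\asymp n^2\hatp(1-\hatp)+n^3(1-2p^2+p^3)^m$. Comparing the two contributions, their ratio equals $n\bigl[1-p^2/(1+p)\bigr]^m$; since $p^2/(1+p)$ stays in a compact subinterval of $(0,1)$, this ratio is $n^{1-o(1)}\to\infty$, so the variance is in fact dominated by the second summand $n^3(1-2p^2+p^3)^m$. Substituting, the third summand in the distance bound is at most $\sigma(p)^m/n$ with
\begin{align*}
\sigma(p):=\frac{1-p+p(1-p)^4}{(1-2p^2+p^3)^2}.
\end{align*}
Because $1-2p^2+p^3=(1-p)(1+p-p^2)$ is bounded away from zero on $[\varepsilon,1-\varepsilon]$, $\sigma$ is continuous and uniformly bounded on this compact interval by some $C_\varepsilon>0$. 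Hence $\sigma(p)^m/n\leq C_\varepsilon^m/n=n^{-1+o(1)}\to 0$, since $m\ln C_\varepsilon=o(\ln n)$.

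The main obstacle is the third summand: one might worry that $\sigma(p)$ explodes, and indeed it does as $p\to 0$ or $p\to 1$. However, the hypothesis $p\in(\varepsilon,1-\varepsilon)$ confines $p$ to a compact interval away from these singular points, exactly making $\sigma$ bounded; the rest of the argument is a careful bookkeeping of exponents in $m$ versus $\ln n$.
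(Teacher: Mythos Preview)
Your proof is correct and follows essentially the same route as the paper. For Part~(1) both you and the paper invoke Proposition~\ref{prop:K14} (the paper via the derived form \eqref{aux8}) and verify that each of the three summands tends to zero; the only cosmetic difference is that the paper produces the explicit bound $\sigma(p)\le 2/\varepsilon$, whereas you appeal to continuity of $\sigma$ on the compact interval $[\varepsilon,1-\varepsilon]$. For Part~(2) the paper simply notes that $n^2\hatp(1-\hatp)\to 0$ and cites Lemma~\ref{lem:necessary}; your argument---showing $\E[V_E]\to 0$, then $V_E=0$ w.h.p.\ by Markov, then $\widetilde N_E\to 0$ in probability---is a direct unpacking of the same mechanism that underlies that lemma, and is slightly more self-contained.
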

\begin{proof}Ad.{\it 1.} The assumption $p>\varepsilon$ implies $mp^3\rightarrow\infty$,  so it suffices to  
	show that both of the terms in \eqref{aux8} tend to zero. Indeed,  we have
	\begin{align}\label{aux9}
		\frac1{n^2\hatp(1-\hatp)}&= \frac1{1-(1-p^2)^m}n^{-\frac{m}{\ln n}\ln\(1-p^2\)-2}\\
		\nonumber
		&\le \frac1{1-(1-\varepsilon^2)^m}n^{-\frac{m}{\ln n}\ln\(1-(1-\varepsilon)^2\)-2}\rightarrow 0.
	\end{align}
	Furthermore, 
	\begin{align*}
		\frac1n\(\frac{1-p+p(1-p)^4}{(1-2p^2+p^3)^2}\)^m&= \frac1n\(\frac{1+p(1-p)^3}{(1-p)\big(1+p(1-p)\big)^2}\)^m\\
		&\le \frac1n\(\frac2{\varepsilon}\)^m=n^{\frac{m}{\ln n}\ln\(2/\varepsilon\)-1}\rightarrow0, 
	\end{align*}
	as required. 
	
	Ad.{\it 2.} If $\frac{m}{\ln n}\rightarrow\infty$,  then the expression in \eqref{aux9} tends to $\infty$,  and thus ${n^2\hatp(1-\hatp)}\rightarrow 0$. Consequently,  
	the necessary condition from Lemma \ref{lem:necessary} is not satisfied.
\end{proof}

\section*{Acknowledgements}	
 Grzegorz Serafin was supported  by the National Science Centre,   Poland,   grant no. 2021/43/D/ST1/03244.

\bibliographystyle{plain}

\end{document}